\numberwithin{equation}{section}
\def \cl{\mathrm{cl}}
\def \exp{\mathrm{exp}}
\def \id{\mathrm{id}}
\def \Id{\mathrm{id}}
\def \ad{\mathrm{ ad}}
\def \Ad{ \mathrm {Ad\,}}
\def \cN{{\mathcal N}}
\def \cI{{\mathcal I}}
\def \cP{{\mathscr P}}
\def \cV{{\mathcal V}}
\def \cQ{{\mathcal Q}}
\def\qand{\quad \text{and}\quad}
\def\diff{\mathfrak{diff}}
\def\Diff{\mathrm{Diff}}
\def\Ad{\mathrm{Ad}}
\def\B{\mathbb B}
\def\bB{\mathbb B}
\def\G{\mathbb G}
\def\P{\mathbb{P}}
\def\R{\mathbb R}
\def\N{\mathbb N}
\def\T{\mathbb T}
\def\Z{\mathbb Z}
\def\id{\mathrm{id}}
\def\supp{\mathrm{supp}\:}
\def\sg{{\mathfrak g}}
\def\si{{\mathfrak i}}
\def\sh{{\mathfrak h}}
\def\seig{{\mathfrak {Eig}}}
\def\spp{{\mathfrak{p}}}
\def \Fl{\mathrm{Fl}}
\newtheorem{proposition}{Proposition}[section]
\newtheorem*{theorem*}{Theorem}
\newtheorem{coro}[proposition]{Corollary}
\newtheorem{problem}[proposition] {Problem}
\newtheorem*{problem*}{Problem}
\newtheorem{lemma}[proposition] {Lemma}
\newtheorem{sublemma}[proposition] {Sub-Lemma}
\newtheorem{theo}{Theorem}
\newtheorem{theoprime}{Theorem}
\newtheorem{fact}[proposition]{Fact}
\newtheorem{corollary}[theo]{Corollary}
\newtheorem{question}[proposition]{Question}
\newtheorem{conjecture}[proposition]{Conjecture}
\theoremstyle{remark}
\newtheorem{example}[proposition]{Example}
\newtheorem{remark}[proposition]{Remark}
\theoremstyle{definition}
\newtheorem{definition}[proposition] {Definition}
\DeclareTextFontCommand{\emph}{\em\bf}
\begin{document}

 \title{Every diffeomorphism is a total renormalization of a close to identity map}
\author{Pierre Berger\thanks{IMJ-PRG, CNRS, Sorbonne University, Paris University, partially supported by the ERC project 818737 Emergence of wild differentiable dynamical systems.
},  Nicolaz Gourmelon\thanks{IMB, UMR 5251, Universit\'e de Bordeaux.}, Mathieu Helfter\thanks{IMJ-PRG, CNRS, Sorbonne University, Paris University, partially supported by the ERC project 818737 Emergence of wild differentiable dynamical systems.
}
}
\date{\today}
\maketitle
\abstract{
For any $1\le r\le \infty$, we show that every  diffeomorphism of a manifold of the form $\R/\Z \times M$ is a total renormalization of a $C^r$-close to identity map. In other words, 
for every diffeomorphism $f$ of $\R/\Z \times M$, there exists a map $g$ arbitrarily close to identity such that the first return map of $g$ to a domain is conjugate to  $f$ and moreover 
the orbit of  this  domain  is equal to 
$\R/\Z\times M$. This enables us to localize near the identity the existence of many properties in dynamical systems, such as being Bernoulli for a smooth volume form. }
\tableofcontents
\section*{Introduction}
\subsection{Statements of the main theorems}
Let $\bB^n$ be the unit closed ball of $\R^n$. 

\begin{definition}[Primitive renormalization] \label{def: renormalization}
A {\em primitive renormalization} $G$ of a diffeomorphism $g\in \Diff(\bB^n) $  is a rescaling of an iteration of $g$. In other words, there exists $N \geq2 $ and an  embedding  $\psi:  \bB^n\hookrightarrow  \bB^n$  such that  $ g^i ( \psi ( \B^n) ) \cap \psi(\B^n) = \emptyset $ for every $ 0 < i < N $ and: 
\begin{equation*}  G = \psi^{-1} \circ g^N \circ \psi .\end{equation*} 
%The map $\psi^{-1}$ is called the {\em rescaling map} of the renormalization.
\end{definition}

A long standing open problem of dynamical  systems theory is:
\begin{problem}[1971]\label{mainproblem}
Which dynamics can be reached by renormalization of close to identity maps?
\end{problem}
This problem was first studied by Ruelle and Takens in  \cite{RT71}. Motivated by the study of turbulence, they proved that for any integer $n \ge 2 $, any dynamics on the $n$-dimensional torus is the renormalization of a $C^n$-close to identity map. This enabled them to construct perturbations of the identity map  of the torus with a strange attractor. Based on this, they conjectured that this appears as well in fluid dynamics and could be used as a mathematical definition of the notion of turbulence \cite{Lo63}. 

The main mathematical issue with this result is that the regularity is   limited~by the dimension of the torus.  However when considering flows, this problem was solved by Newhouse, Ruelle and Takens in \cite{NRT78}: given any vector filed $X$  equal to a rotation  on the torus $\T^n$, $n\ge 3$ and any map $F_0\in \Diff^\infty(\T^{n-1})$ homotopic to the identity, 
they perturbed $X$ to $ \widetilde{X} $ so that
its first return map to a global transverse section is $F_0$. 
Yet $\Diff^\infty(\T^{n})$ is ``much larger'' than $\Diff^\infty(\T^{n-1})$ and so the mathematical \cref{mainproblem} remains unsolved. 

A breakthrough was then performed in the seminal work of Turaev who proved that a $C^r$-\emph{dense subset} of $C^r$-orientation preserving embeddings of $ \bB^n$  could be obtained after renormalization of an arbitrarily close to identity map, for every $0\le r\le \infty$. 

A first main result is a solution to   \cref{mainproblem}, where  we improve Turaev's theorem to obtain, via a self-contained and new proof, \emph{any} $C^r$-orientation preserving map of $ \bB^n$ (instead of maps among a dense subset): 
 \begin{theo} 
\label{main1}For any $1\le r\le \infty$ and any orientation preserving $G\in \Diff^r(  \bB^n)$, in any neighborhood  $\cN \subset \Diff^r (\bB^n)$ of the identity,  there exists $g\in \cN$  such that a primitive renormalization of $g$ is   \textbf{equal} to $G$.  Moreover the rescaling map of this renormalization can be chosen affine.  
\end{theo}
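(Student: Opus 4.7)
The plan is to realize $G$ as the $N$-th iterate of a near-identity diffeomorphism $g$, restricted, up to an affine rescaling $\psi$, to a small chart. Concretely, $g$ will cyclically shuffle $N$ tiny affine copies of $\bB^n$ inside $\bB^n$, applying a small twist at each pass so that the accumulated twist after $N$ passes is exactly $G$. First I would choose an isotopy $\{G_t\}_{t\in[0,1]}$ from $\id$ to $G$ (available when $\Diff^r(\bB^n)$ is path-connected, and otherwise reduced to this case via a fragmentation argument on $G$) and set $H_i:=G_{i/N}\circ G_{(i-1)/N}^{-1}$ for $i=1,\dots,N$, so that $G=H_N\circ\cdots\circ H_1$ and $\|H_i-\id\|_{C^r}=O(1/N)$ uniformly.

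Second, I would pick affine embeddings $\psi_0,\psi_1,\dots,\psi_N=\psi_0$ of $\bB^n$ into $\bB^n$ whose images $C_i:=\psi_i(\bB^n)$ are pairwise disjoint and arranged along a closed ``necklace'' so that consecutive cells $C_{i-1}$ and $C_i$ are close. On a neighborhood of each $C_{i-1}$, define $g:=\psi_i\circ H_i\circ\psi_{i-1}^{-1}$, and interpolate to the identity away from these neighborhoods via smooth bumps. Then by telescoping,
\[
g^N\circ\psi_0=\psi_N\circ H_N\circ\cdots\circ H_1=\psi_0\circ G,
\]
which is the desired renormalization identity $\psi_0^{-1}\circ g^N\circ\psi_0=G$ with the rescaling $\psi_0$ affine, as in Theorem~\ref{main1}.

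The main obstacle is the $C^r$-estimate for $g$. Writing $\psi_i(x)=A_ix+c_i$, one has
\[
D^k\bigl(\psi_i\circ H_i\circ\psi_{i-1}^{-1}\bigr)=A_i\cdot D^kH_i\cdot (A_{i-1}^{-1})^{\otimes k},
\]
so with an isotropic rescaling of magnitude $\lambda$ the $C^k$-norm of each single step is of order $\lambda^{1-k}/N$. Combined with the packing constraint $N\lambda^n\lesssim 1$ (disjoint cells inside $\bB^n$), this naive analysis only gives $C^r$-closeness for $r\le n$, which is exactly the Ruelle--Takens regularity. To reach arbitrary $r$, the cells $C_i$ must be laid out in a much subtler, strongly anisotropic or multiscale, fashion, and the decomposition $G=H_N\circ\cdots\circ H_1$ must be refined so that the high-order derivatives of each $H_i$ decay faster than $1/N$ (for instance via a time-dependent flow decomposition, or via commutator/nested-renormalization identities that redistribute the bad $\lambda^{1-k}$ factor across many smaller and more tangentially-aligned steps). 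Arranging these competing constraints to work simultaneously for all $1\le r\le\infty$, while keeping $\psi_0$ affine and $g$ a genuine diffeomorphism of $\bB^n$ in any prescribed $C^r$-neighborhood of the identity, is the crux of the argument and the step that goes beyond Turaev's density statement.
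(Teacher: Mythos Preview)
Your proposal is not a proof but an outline that correctly reproduces the Ruelle--Takens construction, correctly identifies why it is limited to regularity $r\le n$, and then stops precisely at the hard part. The final paragraph is an honest admission that the mechanism overcoming the scaling obstruction is missing; vague appeals to ``anisotropic or multiscale'' layouts or ``commutator/nested-renormalization identities'' do not constitute an argument, and nothing in your text indicates how to make $\|D^k H_i\|$ decay faster than $1/N$ for all $k$ simultaneously.

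The paper's route is quite different from the direct necklace picture. Theorem~\ref{main1} is deduced as a corollary of the main Theorem~\ref{main} on $\T\times M$: one extends $G$ to a compactly supported diffeomorphism, embeds $\bB^n$ into $\T\times\bB^{n-1}$ away from $\{0\}\times\bB^{n-1}$, obtains a \emph{plugin} $\tilde g$ close to the identity with output $\tilde G$ and constant return time (Fact~\ref{fact inv}), and then re-embeds $\T\times\bB^{n-1}$ into $\bB^n$. The scaling obstruction you identify is defeated not by a clever packing of balls but by the ``zoom'' Sub-Lemma~\ref{s.zoomin}: a close-to-identity plugin of step $2^{-k}$ is designed so that some iterate stretches the thin fundamental domain $\Delta_{2^{-k}}$ onto a region of \emph{fixed} width $\delta$, independent of $k$; the perturbation realizing $G$ is then inserted there, and its $C^r$-size is controlled by $\delta$ alone. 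This already yields a neighborhood $\cN_c$ of the identity all of whose elements are outputs of arbitrarily close-to-identity plugins (Lemma~\ref{l.neigh.of.Id}). Reaching an arbitrary $G$ from this neighborhood requires the rest of the machinery: the $\star$-product on plugins (Proposition~\ref{outputstar}), the closedness of the pluggable group (Proposition~\ref{c.closure}), and the Lie-algebraic arguments of Section~\ref{sec coeur de la preuve constr of plugg vect} (in particular Proposition~\ref{p.adjunctionstability} on eigenvectors of $\ad_X$ and the decomposition Proposition~\ref{p.pre_deceig}). None of these ingredients is visible in your sketch, and the naive isotopy decomposition $G=H_N\circ\cdots\circ H_1$ with $\|H_i-\id\|_{C^r}=O(1/N)$ cannot by itself bridge the gap.
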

A natural open problem is whether $g$ can be obtained conservative or symplectic when $G$ is conservative or symplectic. In this direction let us mention the work of Gonchenko-Shilnikov-Turaev \cite{GST07} who proved that, for every $0\le r\le \infty$, a $C^r$-dense subset of volume preserving embeddings of $ \bB^2$
could be obtained   after renormalization of an arbitrarily close to identity volume preserving map. 
Recently Fayad and Saprykina in \cite{fayad2021realizing} showed that any conservative map of the $n$-dimensional ball can be realized   by renormalized iteration  of a conservative $C^n$-perturbation of the identity.

If all these theorems indicate the richness of the possible  dynamical behaviors near the identity, one can object  the following. In the setting of \cref{def: renormalization}, the orbit of  $\bigcup_{k=0}^{N-1}  \psi(\bB^n)$ of the renormalization domain might be extremely small and so experimentally not observable. This objection is lifted completely when the renormalization domain intersects every orbit. This leads us to   generalize the notion of renormalization by the following:

\begin{definition}[Renormalization] \label{def: renormalization2} Let $r\in \{1, \dots, \infty\}\cup \{\omega \}$ and let $V$ be a manifold (with boundary). A  map $g\in \Diff^r(V)$  is {\em renormalizable} if there exists a strict submanifold with corners   $\Delta\subsetneq V$ such that: 
\begin{itemize}
\item there exists a bijective, local $C^r$-diffeomorphism $H: \Delta \to V$, called the {\em rescaling map} of the {\em renormalization domain} $\Delta$,
\item the first return time $\tau :\Delta \to \N^* $  into $\Delta$ by $g$ is bounded and the {\em renormalization} $G=H\circ g^\tau\circ H^{-1}$ belongs to $\Diff^r(V)$. \end{itemize} 
The map $g$ is {\em totally renormalizable} if the forward orbit of $\Delta$ covers $V$, i.e. $\bigcup_{n\ge 0} g^n(\Delta)=V $.  The map $G$ is then a {\em total renormalization} of $g$. 
\end{definition} 
 
\begin{remark}
Note that if $g\in \Diff^r(\B^n)$ displays a  primitive renormalization with embedding $\psi: \B^n\hookrightarrow \B^n$ and time $N$, then 
$\Delta:=\psi(\B^n)$ is renormalization domain of $g$ with constant return time $\tau\equiv N$ and rescaling map $H=\psi^{-1}$. Hence \cref{def: renormalization2} generalizes  
\cref{def: renormalization}. Note that the latter renormalization is never total. 

Moreover \cref{def: renormalization2} allows to consider a larger class of manifolds $V$ as we do not ask $ H = \psi^{-1} $ to be continuous on the boundary of the renormalization domain. The next example is about a total renormalization on the circle; a renormalization which is not primitive.  
\end{remark}

\begin{example}\label{Yoccoz constr} When $V$ is the circle $\T$, a diffeomorphism $g$ is totally  renormalizable iff it does not fix a point. Indeed in this case, take any point $0\in \T$ and consider the interval  $\Delta =[0,g(0))$. Then we glue the two endpoints of $\Delta$ using $g$ to obtain a circle and we uniformize it to obtain $\T$. This defines a map $H$. For this setting one easily shows that the mapping $g$ is renormalizable. This construction was intensively used by Yoccoz \cite{yoccoz1995petits}. % See also Shil'nikov's parabolic renormalization  \cite{shinikov}.
\end{example}

Let $V$ be a compact manifold   (possibly with corners) and $1\le r\le \infty$. We recall that the {\em support} $\supp f$ of $f\in \Diff^r(V)$ is the closure of the set of points such that $f(x)\neq x$.
\begin{definition}
Let $\Diff^r_0(V)$ be the component of the identity in $\Diff^r(V)$. Let $\Diff^r_c(V)$ be the subset of $\Diff^r_0(V)$ formed by maps isotopic to $\id$ through isotopies $(f_t)_{t\in [0,1]}$ whose  support  $\bigcup_{t\in [0,1]} \supp f_t$  is a compact subset of $V\setminus \partial V$.
\end{definition}
 Observe that when $V$ is boundaryless, it holds $\Diff^r_0(V)=\Diff^r_c(V)$. A natural question is:
\begin{question} \label{question sur la var possible}
For which  manifold $V$, any  map $F\in \Diff^r_c(V)$ is a \emph{total} renormalization of a close to identity map? 
\end{question} 
So far   no example of  such a manifold $V$ was known. In this work  we 
give a full class of  examples:  
%provide evidences to show that a sufficient condition is that $V$ is boundaryless and support a circle action. For instance, a consequence of our main \cref{main} is the following result for $V$ equal to the $n$-torus $\T^n$:
%using  a generalization of the construction in \cref{Yoccoz constr}, we show the following for $V$ equal to the $n$-torus $\T^n$:
 \begin{theo} 
\label{main2} Let $0\le r\le \infty$,  let $M$ be a compact manifold of dimension $\geq 1$ and put ${V:= \T\times M}$. Let $\cN\subset \Diff^r (V)$  be a neighborhood of the identity. Then any $G\in \Diff^r_c(  V)$ is a total renormalization of some $g\in \cN$.% with rescalling domain of the form $ [0,\sigma)\times M$ and such that $g(\{0\}\times M)= \{\sigma\}\times M$.
%Mmap $H: (\theta,y) \in  [0,\sigma)\times M\to (\theta/\sigma, y)\in \T\times M$ for a certain $\sigma>0$.   
\end{theo}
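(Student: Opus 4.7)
The plan is a suspension-type construction: decompose $V = \T \times M$ into $N$ thin strips along the circle factor and use them to spread out an isotopy from $\id$ to $G$, so that a first return to one strip recovers $G$ up to a rescaling.

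Given $G \in \Diff^r_c(V)$, pick an isotopy $\{F_t\}_{t \in [0,1]}$ from $\id$ to $G$ with support compact in $V \setminus \partial V$. After reparametrization in $t$, assume $F_t = \id$ for $t \in [0,1/4]$ and $F_t = G$ for $t \in [3/4,1]$, with all $t$-derivatives up to order $r$ vanishing at $t = 0, 1$. Fix a large integer $N$ (to be tuned against $\cN$), partition $V$ into strips $\Sigma_k := [k/N, (k+1)/N] \times M$ for $k = 0, \ldots, N-1$, and set $\Delta := \Sigma_0$. Define rescalings $H_k : \Sigma_k \to V$, $(\theta, x) \mapsto (N\theta - k, x)$, and put $H := H_0$ as the candidate rescaling map.

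Define $g$ strip by strip by
\[ g|_{\Sigma_k} := H_{k+1}^{-1} \circ \bigl(F_{(k+1)/N} \circ F_{k/N}^{-1}\bigr) \circ H_k, \]
so that $g(\Sigma_k) = \Sigma_{k+1}$ (indices mod $N$). Smoothness across each $\partial \Sigma_k$ comes from the flatness of $F_t$ at $t = 0, 1$, which makes both strip formulas reduce to the pure translation $(\theta, x) \mapsto (\theta + 1/N, x)$ in a neighborhood of the interface. Two properties then fall out of the construction for free: first, $g(\Sigma_k) = \Sigma_{k+1}$ forces the first return time to $\Delta$ to be $\tau \equiv N$ and $\bigsqcup_{k=0}^{N-1} g^k(\Delta) = V$, giving a \emph{total} renormalization; second, telescoping the $H_{k}^{-1} H_k$ cancellations,
\[ g^N|_\Delta = H_0^{-1} \circ (F_1 F_{(N-1)/N}^{-1}) \circ (F_{(N-1)/N} F_{(N-2)/N}^{-1}) \circ \cdots \circ (F_{1/N} F_0^{-1}) \circ H_0 = H^{-1} \circ G \circ H, \]
so $G = H \circ g^N \circ H^{-1}$, as required.

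The main difficulty will be verifying that $g \in \cN$, i.e.\ that $g$ is $C^r$-close to the identity. The $C^0$ closeness is immediate ($\|g - \id\|_{C^0} = O(1/N)$), but the rescalings $H_k$ amplify $\theta$-derivatives by a factor of $N$: a direct estimate produces a cross-term of the form $\partial_\theta g^M = N \cdot \partial_\alpha \phi_k^M$ where $\phi_k := F_{(k+1)/N} F_{k/N}^{-1}$ satisfies only $\|\phi_k - \id\|_{C^1} = O(1/N)$, yielding a bound of $O(1)$ rather than something small. Overcoming this is the crux of the argument, and one expects the proof to refine the basic scheme — either by sharpening the isotopy so that $\partial_\alpha \phi_k^M$ is $O(1/N^2)$ rather than $O(1/N)$ (via a finer subdivision and a compatibly flat reparametrization of $F_t$), or by enlarging the rescaling $H$ so that it absorbs the $\alpha$-dependence of $G^M$ (for example $H(\theta, x) = (N\theta, \eta_{N\theta}(x))$ for a suitable auxiliary isotopy $\eta$ in $\Diff^r(M)$) — so that $\|g - \id\|_{C^r} \to 0$ as $N \to \infty$ and $g$ lies inside any prescribed neighborhood $\cN$.
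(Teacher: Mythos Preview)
Your suspension construction is exactly the right starting point, and you have honestly isolated the real obstruction: after rescaling, $\partial_\theta g^M = N\,\partial_\alpha\phi_k^M$, and since $\phi_k \approx \id + \tfrac1N X_{k/N}$ with $X_t=\partial_tF_t\circ F_t^{-1}$, this equals $\partial_\alpha X_{k/N}^M + O(1/N)$, which does \emph{not} tend to zero unless the generating field of the isotopy has $M$-component independent of~$\theta$. Worse, $\partial_\theta^j g^M = N^{j-1}\,\partial_\alpha^j X_{k/N}^M + O(N^{j-2})$ for $j\ge 2$, so higher $\theta$-derivatives blow up. Neither of your proposed fixes resolves this: a time-reparametrisation $F_t\mapsto F_{\rho(t)}$ replaces $X_t$ by $\rho'(t)X_{\rho(t)}$, leaving $\partial_\alpha X^M$ untouched; a finer subdivision rescales both the gain $N$ and the smallness $1/N$ by the same factor; and a $\theta$-dependent rescaling $H(\theta,x)=(N\theta,\eta_{N\theta}(x))$ reintroduces exactly the same $N\cdot\partial_\alpha$ amplification through $\eta$. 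A structural warning is \cref{cexample}: when $\dim M=0$ your construction goes through verbatim, yet the theorem is \emph{false} there for $r\ge 2$ --- so any argument that does not exploit the extra $M$-directions cannot close.

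What the paper does instead is to show that the set $\P$ of maps obtainable this way is a \emph{closed subgroup} of $\Diff^\infty_c(\T\times M)$ (via a $\star$-product of plugins and a zooming lemma), and then to work at the Lie-algebra level. Your construction proves directly that the subgroups $\G_1=\{(\theta,y)\mapsto(\theta+\nu(y),y)\}$ and $\G_2=\{(\theta,y)\mapsto(\theta,F(y))\}$ lie in $\P$ (their generating fields have $\partial_\theta X^M\equiv 0$), but the algebra they generate is far from everything. The decisive extra mechanism is \cref{p.adjunctionstability}: if $Y=[X,Y]$ with $X$ already pluggable, then $\Fl_Y^t=\Fl_X^{-s}\circ\Fl_{Y}^{te^{-s}}\circ\Fl_X^{s}$, so one manufactures a plugin for $\Fl_Y^t$ by sandwiching a near-identity plugin (for $\Fl_Y^{te^{-s}}$, obtained by your zoom-and-perturb idea) between plugins for $\Fl_X^{\pm s}$. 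Combined with an algebraic decomposition (\cref{p.pre_deceig}) expressing every compactly supported field through such eigen-brackets, this reaches all of $\diff^\infty_c(\T\times M)$ and hence, by closure, all of $\Diff^\infty_c(\T\times M)$. The finite-$r$ case is then deduced by smoothing $G$ and handling the $C^r$-small remainder with your near-identity construction (\cref{l.neigh.of.Id}).
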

If \cref{main1} implies that every \emph{local} dynamical phenomenon can be found near the identity,  \cref{main2}  implies that every \emph{global} dynamical phenomenon can be found near the identity.  A new improvement brought by the latter result is that the renormalization domain is larger than in all of the previous extensions of Ruelle-Takens theorems: its orbit coincides with the whole domain of the dynamics. In \cref{main}, we will  give a precise formula   defining  the renormalization domain and rescaling map involved in \cref{main2}. This will enable new applications such as the proof of existence of maps preserving smooth SRB near the identity (see \cref{coro thouvenot}) or universal maps whose 
renormalization domains decrease as slow as we want (see \cref{univ}).

 In \cref{cexample}, we show that \cref{main2} is wrong when $\T\times M \approx \T$, hence the set of dimensions of the manifold is optimal. On the other hand, a natural open problem communicated to us by Turaev is:
 \begin{problem}
Show that a dense subset of $\Diff^\omega(\B^n)$ is equal to the renormalization of a close to identity map in $\Diff^\omega(\B^n)$? 
 \end{problem}
 Another natural question is: 
  \begin{question}
Is \cref{main2} correct in the area preserving or symplectic categories?
 \end{question}

\medskip 

 An extension of \cref{main2} regards the $C^r$-families $  f_\cP  = (f_p)_{p\in \cP}$ of maps $f_p\in\Diff^r(V)$ and indexed by a manifold  $\cP$. A family $  (f_p)_{p\in \cP}$ is {\em of class $C^r$} if the following is in    $\Diff^r(V\times \cP)$:
\begin{equation} \label{def widehat}  \widehat{f_\cP}   : = (x,p)\mapsto( f_p(x),p)\; .\end{equation}   
We denote by $\Diff^r(V)_ \cP $ the space of such families endowed with the topology induced by  
$\Diff^r(V\times \cP)$.   Let $\Diff_c^r(V)_\cP$ be the component of the identity in $\Diff^r(V)_\cP$ for homotopies $(f_{p,t})_{(p,t)\in \cP\times[0,1]}\in \Diff^r(V)_ {\cP\times [0,1]} $ whose support $\bigcup_{ (t,p) \in  [0,1] \times \cP} \supp f_{p,t} \times \lbrace p \rbrace $ is a compact subset of  the interior of $V \times \cP $. Observe that $ \widehat{f_\cP}  \in \Diff^r_c ( V \times \cP )  $.

%the set of families $(f_p)_{p\in \cP}$ {\em compactly homotopic to the identity family:} 
%  there exists a $C^r$-family $(f_{p,t})_{(p,t)\in \cP\times[0,1]}$ such that 
%  $(f_{p,0})_{p\in \cP}=(id)_{p\in \cP}$, $(f_{p,1})_{p\in \cP}=(f_p)_{p\in \cP}$ and
%  the union of the supports of the  $f_{p,t}$ is in the interior of $V$.
 
%  of the form: 
%  \begin{equation} f_{\cP,t}:= (x,p,t)\mapsto( f_{p,t}(x),p), \quad \forall t\in [0,1],\end{equation} 
 
%  $f_\cP$ is isotopic to the identity via a homotopy $(f_{\cP,t})_{t\in [0,1]}$ of the form:
% \begin{equation} f_{\cP,t}:= (x,p)\mapsto( f_{p,t}(x),p), \quad \forall t\in [0,1],\end{equation} 
% and such that the union of the supports of the  $f_{\cP,t}$ is in the interior of $V\times \cP$.  
%\setcounter{theo}{2}
\begin{theoprime}\label{main2prime}  
Let $0\le r\le \infty$, let $M$ and $\cP$  be  compact manifolds  of dim $\geq 1$  and set  ${V:= \T\times M}$. Let $\cN\subset   \Diff^r (V)_\cP$ be a neighborhood of $(id)_{p\in \cP}$. 

Then for any ${(G_p)_{p\in \cP}\in \Diff^r_c( V)_\cP}$, there exist 
$(g_p)_{p\in \cP}\in \cN$ and  a rescaling map  (independent of $p\in \cP$)
 of a  total renormalization domain    which  renormalizes each $g_p$ to $G_p$.
\end{theoprime}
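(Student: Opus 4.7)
The plan is to derive Theorem B$'$ from a fibered version of \cref{main2} applied to the trivial extension of the family. Consider the map $\widehat{G_\cP}\in\Diff^r_c(V\times\cP)$ defined in \eqref{def widehat}. Since $V\times\cP=\T\times(M\times\cP)$ and $M\times\cP$ is a compact manifold of dimension $\ge 2$, the hypotheses of \cref{main2} (in its variant allowing corners on the $M$-factor, which is already needed by the main theorem) are met by $\widehat{G_\cP}$.

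Next, I would run the construction underlying \cref{main2} on $\widehat{G_\cP}$, arranging that every geometric choice respects the projection $V\times\cP\to\cP$. This should produce a map $\widehat{g}\in\Diff^r(V\times\cP)$ close to $\id$, a renormalization domain $\widehat{\Delta}\subset V\times\cP$, and a rescaling $\widehat{H}:\widehat{\Delta}\to V\times\cP$ whose orbit under $\widehat{g}$ fills $V\times\cP$ and whose renormalization equals $\widehat{G_\cP}$, together with the additional fiberedness properties
\begin{equation*}
\widehat{g}(x,p)=(g_p(x),p),\qquad \widehat{\Delta}=\Delta\times\cP,\qquad \widehat{H}(x,p)=(H(x),p).
\end{equation*}
Once these are in hand, the first return time of $\widehat{g}$ to $\widehat{\Delta}$ splits as $\widehat{\tau}(x,p)=\tau_p(x)$, the identity $\widehat{H}\circ\widehat{g}^{\,\widehat{\tau}}\circ\widehat{H}^{-1}=\widehat{G_\cP}$ unfolds into $H\circ g_p^{\tau_p}\circ H^{-1}=G_p$ for each $p\in\cP$, and the $C^r$-closeness of $\widehat{g}$ to the identity on $V\times\cP$ is precisely the condition $(g_p)_{p\in\cP}\in\cN$.

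The heart of the argument is therefore the verification that the proof of \cref{main2} can be carried out equivariantly with respect to the parameter. All geometric pieces built from the cylindrical structure of $\T\times(M\times\cP)$, such as the total renormalization domain $\Delta$ and its rescaling $H$ (which the paper will give by an explicit formula in \cref{main}), can be taken $\cP$-invariant: only the $\T$- and $M$-directions enter their definition, and $\cP$ is a spectator direction. What does depend on $p$ is the local modification used to realize the target diffeomorphism in one iterate of the first return; replacing the single map $G$ by the fibered map $\widehat{G_\cP}$, the very same formulas produce a fibered $\widehat{g}$, with smooth dependence on $p$. Finally, uniform $C^r$-closeness of $\widehat{g}$ to the identity, which is what is needed to land in $\cN$, follows exactly as in \cref{main2}: the closeness is governed by a rescaling/contraction factor that is independent of the target, and the compactness of $\cP$ ensures that the estimates transfer uniformly to the whole family. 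The main obstacle, accordingly, is checking line by line in the construction of \cref{main2} that no step forces a choice which breaks the $\cP$-equivariance; assuming the construction is built (as is natural) from flows along $\T$, bump functions supported in $\T\times M$, and the $p$-parametric plug realizing $G_p$, this verification is a routine but essential bookkeeping exercise.
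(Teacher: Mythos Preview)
Your proposal is correct and matches the paper's approach. The paper deduces Theorem~B$'$ immediately from Theorem~D$'$ (\cref{mainprime}), which in turn is proved exactly as you outline: by introducing the notion of a $\cP$-plugin (a family $(g_p)_{p\in\cP}$ such that $\widehat{g_\cP}$ is a plugin on $\T\times M\times\cP$) and then systematically establishing a ``parametric counterpart'' of every intermediate proposition, so that the renormalization domain $\Delta_\sigma=[0,\sigma)\times M$ and rescaling $H_\sigma$ are fixed while only the plugin varies with $p$. Your ``routine but essential bookkeeping exercise'' is precisely what the paper carries out in the paragraphs labeled \emph{Parametric counterpart} scattered throughout Sections~\ref{sec plugins and pluggable dynamics}--\ref{sec coeur de la preuve constr of plugg vect}.
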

 This theorem implies that any  bifurcation in $\Diff^r_c (V)$ occurs at small unfolding of the identity.  
In \cref{lieu thm D}, we will state the main general   \cref{main} which implies 
Theorems \ref{main1} and \ref{main2} and also its parametric counterpart \cref{mainprime} which implies \cref{main2prime}.  In \cref{applications} we will give several applications of them. 
Now let us discuss the optimality of \cref{main2}. 
\begin{proposition}\label{cexample}  When $r\ge 2$,  \cref{main2} is wrong if $V \approx \T$,   i.e. when it is isomorphic to the circle as a smooth manifold   (and so $\dim M=0$). 
\end{proposition}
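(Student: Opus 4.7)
The plan is to exhibit a single $G\in \Diff^r_c(\T)$ that fails to be the total renormalization of any $g$ in a sufficiently small $C^r$-neighborhood $\cN$ of $\id$. I would take $G$ to be a Morse--Smale circle diffeomorphism with one hyperbolic attracting and one hyperbolic repelling fixed point: such a $G$ lies in $\Diff^r_c(\T)=\Diff^r_0(\T)$ and is manifestly not topologically conjugate to any rotation of $\T$ (rotations have no hyperbolic sinks).

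Supposing for contradiction that some $g\in \cN$ admits $G$ as total renormalization, with data $\Delta\subsetneq \T$, rescaling $H:\Delta\to \T$, and bounded first-return time $\tau$, \cref{Yoccoz constr} forces $g$ to have no fixed point, so $\alpha:=\rho(g)\neq 0$; moreover $|\alpha|$ is small because $g$ is close to $\id$. When $\alpha$ is irrational, the hypothesis $r\geq 2$ enters decisively through Denjoy's theorem: it provides a homeomorphism $\phi$ of $\T$ with $g=\phi^{-1}\circ R_\alpha\circ \phi$, where $R_\alpha$ denotes the rigid rotation by $\alpha$. A classical computation (the rotation step of Rauzy--Yoccoz induction) shows that the first return map of $R_\alpha$ to a sub-interval, together with Yoccoz's gluing of endpoints and uniformization, is again a rigid rotation of $\T$. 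Transporting this conjugacy through $H\circ \phi^{-1}$ shows that $G$ itself is topologically conjugate to a rotation --- contradicting the presence of a hyperbolic attracting fixed point for $G$.

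The remaining case $\alpha=p/q$ rational is the main obstacle. Here $g$ has periodic orbits of period $q$, and the renormalization $G$ inherits their structure: its rotation number is of the form $k/p$, and the derivative of $G$ at each fixed point equals a Lyapunov product $\prod_{i=0}^{q-1}Dg(g^i x_0)$ along the corresponding period-$q$ orbit of $g$. To close the argument one arranges the Morse--Smale $G$ so that its prescribed attracting derivative, together with the positions of its two fixed points and the required compatibility with $\rho(G)$, cannot be simultaneously realized by any $g\in \cN$. Making this incompatibility quantitative is the technical heart of the proof, since the period $q$ and the pointwise size of $Dg-1$ can trade against each other as $g$ varies in $\cN$; I expect this to be the step requiring the most care, whereas the irrational case is a direct application of Denjoy.
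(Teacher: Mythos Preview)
Your irrational-rotation-number case via Denjoy is fine, but the rational case is a genuine gap: you yourself flag that the trade-off between the period $q$ and the pointwise size of $Dg-1$ is unresolved, and without controlling it you cannot rule out that a $C^2$-small $g$ with large period $q$ produces an arbitrary multiplier at the fixed points of $G$. As written, the proof is incomplete.

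The paper's argument avoids the rotation-number dichotomy entirely and is much shorter. It shows directly that \emph{every} total renormalization $F$ of a $C^2$-close-to-identity circle map $f$ has constant derivative, hence is a rotation; then any non-rotation $G$ (your Morse--Smale example, or anything else) immediately gives the contradiction. The key is a bounded-distortion estimate: for $\theta,\theta'\in\Delta$ one first observes $|\tau(\theta)-\tau(\theta')|\le 1$, so with $N=\min(\tau(\theta),\tau(\theta'))$ and orbit segments $(\theta_i)_{i<N},(\theta'_i)_{i<N}$,
\[
\left|\log\frac{D_\theta F}{D_{\theta'}F}\right|
=\left|\sum_{i=0}^{N-1}\log\frac{D_{\theta_i}f}{D_{\theta'_i}f}\right|+o(1)
\le \bigl\|\log|Df|\bigr\|_{C^1}\cdot\sum_{i=0}^{N-1}|\theta_i-\theta'_i|+o(1).
\]
Because $f$ is an orientation-preserving circle diffeomorphism and neither $\theta$ nor $\theta'$ returns to $\Delta$ before time $N$, the iterated arcs $[\theta_i,\theta'_i]$ are pairwise disjoint in $\T$, so $\sum_i|\theta_i-\theta'_i|\le 1$. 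Since $\|\log|Df|\|_{C^1}=o(1)$ as $f\to\id$ in $C^2$, the right-hand side is $o(1)$, forcing $DF$ to be constant.

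Note that this disjoint-arcs distortion bound is precisely the quantitative control you were missing in the rational case: applied to two interlaced period-$q$ orbits of $g$, it would show that the attracting and repelling multipliers of $G$ must agree up to $o(1)$, contradicting your Morse--Smale choice. But once you have this estimate in hand, there is no reason to invoke Denjoy or split on the rotation number at all.
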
 
\begin{proof}
{  Indeed  if $F$ is a renormalization of a close to  identity map $f$ for a renormalization domain $\Delta$, then  we have necessarily $|\tau(\theta)-\tau(\theta')|\le 1$ for any $\theta,\theta'\in \Delta$. Let $N:= \min\{\tau(\theta),\tau(\theta')\}$.  We compute the derivative of the $N-1$ first iterates $(\theta_i)_i$ and   $(\theta'_i)_i$ of $\theta$ and $\theta'$:
\begin{equation*} \log \left|\frac {D_\theta F}{D_{\theta'} F}\right|= \log \left|\frac{D_\theta f^N}{D_{\theta'} f^N}\right| +o(1)\quad \text{when } f\to id \end{equation*} 
\begin{equation*} =\sum_{i=0}^{N-1} \log \left|\frac{D_{\theta_i} f}{D_{\theta'_i} f}\right|+o(1)\le \|\log | D f|\|_{C^{1}}\cdot \sum_{i=0}^{N-1}  |\theta_i-\theta_i'|+o(1)=o(1),\end{equation*} 
where the latter inequality uses that $ \|\log | D f|\|_{C^{1}}$ is small while the segments  $[\theta_i,\theta_i']$ are disjoint and so the union of their length is at most $1$. Hence this proves that the derivative of $F$ is constant and so that $F$ must be a rotation.  }
\end{proof}

  Also we cannot change $\Diff_c^r ( \T\times M)$ by $\Diff_0^r ( \T\times M)$ in \cref{main2}. Indeed the latter proposition applied to the boundary of $[0,1]$ implies immediately:
\begin{coro}\label{cexample1} 
There are $G\in \Diff_0^\infty ( \T\times [0,1])$ which are not total renormalization of $C^2$-close to identity map.
\end{coro}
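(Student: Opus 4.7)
The plan is to exhibit $G \in \Diff_0^\infty(\T \times [0,1])$ whose restriction to the boundary circle $\T \times \{0\}$ is not a rotation of $\T$, and then to rule out $G$ being a total renormalization of a $C^2$-close-to-identity map by restricting the renormalization data to the invariant boundary $\T \times \{0\}$ and invoking \cref{cexample}.

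I would take $\psi \in \Diff_0^\infty(\T)$ that is not a rotation -- for instance $\psi(x) := x + \varepsilon \sin(2\pi x)$ with $\varepsilon > 0$ small, whose derivative is non-constant on every open arc of $\T$ -- together with a smooth isotopy $(\psi_s)_{s \in [0,1]}$ in $\Diff_0^\infty(\T)$ from $\psi_0 = \psi$ to $\psi_1 = \id$. Setting $G(x,s) := (\psi_s(x), s)$ yields $G \in \Diff_0^\infty(\T \times [0,1])$ with $G|_{\T \times \{0\}} = \psi$. Suppose for contradiction that $G = H \circ g^\tau \circ H^{-1}$ is a total renormalization of some $g$ arbitrarily $C^2$-close to $\id$, with data $(\Delta, H, \tau)$ as in \cref{def: renormalization2}. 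Since $g$ is close to identity, $g$ preserves each boundary circle, and $g_0 := g|_{\T \times \{0\}}$ is $C^2$-close to $\id$ on $\T$.

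Restricting everything to the $g$-invariant boundary circle: set $\widetilde{\Delta}_0 := \Delta \cap (\T \times \{0\})$. By total renormalization the $g_0$-orbit of $\widetilde{\Delta}_0$ covers $\T \times \{0\}$, so in particular $\widetilde{\Delta}_0 \neq \emptyset$. Because $H$ is a bijective local diffeomorphism between $2$-manifolds with corners, it preserves the codimension-$1$ stratification, so $H(\widetilde{\Delta}_0) \subset \partial V = (\T \times \{0\}) \cup (\T \times \{1\})$. I would pick a connected component $C$ of $\widetilde{\Delta}_0$ that $H$ sends into $\T \times \{0\}$ (up to exchanging the roles of the two boundary circles, which is symmetric since $G$ can be chosen with the same property on $\T\times\{1\}$). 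Then $C \subset \T \times \{0\}$ is $g_0^\tau$-invariant, the first return of $g_0$ to $C$ has bounded return time equal to $\tau|_C$, and $H|_C$ is an injective local diffeomorphism conjugating that first return map to $\psi$ restricted to the open arc $H(C) \subset \T \times \{0\}$. The telescoping derivative estimate from the proof of \cref{cexample} -- which only uses that the iterates $\theta_i, \theta'_i$ of two close points of $C$ stay on the invariant circle $\T \times \{0\}$, that the segments $[\theta_i, \theta'_i]$ are pairwise disjoint with total length $\le 1$, and that $\|\log |Dg_0|\|_{C^1} = o(1)$ -- then forces $\log |D\psi|$ to be constant on $H(C)$, contradicting the choice of $\psi$.

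The main obstacle is guaranteeing the existence of such a connected component $C$ of $\widetilde{\Delta}_0$ that $H$ sends into $\T \times \{0\}$ rather than across to $\T \times \{1\}$, and ensuring that the restricted structure $(C, H|_C, \tau|_C)$ is a genuine one-dimensional renormalization datum on an arc of $\T$. This boils down to a careful stratum-by-stratum analysis of the local diffeomorphism $H$ on the manifold-with-corners $\Delta$, using the surjectivity of $H$ and the $g^\tau$-invariance of $H^{-1}(\T \times \{0\})$; choosing $\psi$ so that $D\psi$ is non-constant on \emph{every} open arc of $\T$ makes the argument robust, since it allows the derivative-constancy conclusion to be applied on an arc $H(C)$ rather than on the full circle.
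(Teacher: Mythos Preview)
Your approach is exactly the paper's: the paper's entire proof of this corollary is the single clause ``the latter proposition applied to the boundary of $[0,1]$ implies immediately'', so you are in fact spelling out far more than the paper does. The strategy---pick $G$ whose restriction to a boundary circle is not a rotation, restrict the renormalization data to that $g$-invariant circle, and invoke the derivative estimate of \cref{cexample}---is precisely the intended one.

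One remark on the obstacle you flag. You argue that $H$, being a bijective local diffeomorphism of manifolds with corners, must preserve the codimension-$1$ stratification and hence send $\widetilde\Delta_0$ into $\partial V$. Be careful here: the paper's own rescaling maps $H_\sigma\colon [0,\sigma)\times M\to \T\times M$ send the depth-one face $\{0\}\times M$ of $\Delta_\sigma$ to $\{0\}\times M$, which lies in the \emph{interior} of $\T\times M$ when $M$ is boundaryless. So the phrase ``bijective local diffeomorphism'' in \cref{def: renormalization2} is being used in a sense that does not force preservation of corner depth, and your stratification step is not justified by the definitions as written. The paper's one-line proof glosses over exactly the same point, so you are not doing worse than the paper---but your write-up should not present the stratification claim as automatic. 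Your precaution of choosing $\psi$ with non-constant derivative on \emph{every} arc, and of arranging $G$ to be a non-rotation on \emph{both} boundary circles, is a sensible way to make the argument robust against whichever boundary component $H$ picks out.
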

Yet in view of \cref{question sur la var possible}, \cref{main2} seems to be generalizable for a manifold $V$ on which $\T$ acts properly discontinuously without any fixed point.\\

\thanks{ \em We are grateful to the referee for their thoughtful corrections and suggestions.}

 \subsection{Applications and open problems}
 \label{applications}
\paragraph{Smooth SRB near the identity}
In \cref{subsec plugins} we will state \cref{main} which will imply together with  Katok's theorem \cite{katok1979bernoulli}, an answer to an open question of Thouvenot:
  \begin{corollary} \label{coro thouvenot}
In any neighborhood $\cN$ of $\id\in \Diff^\infty(\T^2)$ there is a map $g\in \cN$ which leaves invariant an ergodic smooth volume form and displays positive Lyapunov exponent at Lebesgue a.e. point.\end{corollary}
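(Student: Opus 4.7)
The plan is to combine Katok's construction of area-preserving Bernoulli surface diffeomorphisms with the main general result \cref{main} (specialized to $V=\T\times\T=\T^2$, i.e.\ $M=\T$ and $r=\infty$), and to transfer the smooth invariant measure from the renormalization back to the close-to-identity map via a Kakutani tower.

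First I would invoke Katok's theorem \cite{katok1979bernoulli} to produce a $C^\infty$ diffeomorphism $G$ of $\T^2$ preserving a smooth area form $\mu$ that is Bernoulli (hence ergodic) and has positive Lyapunov exponents $\mu$-a.e. Since $\T^2$ is boundaryless, $\Diff^\infty_c(\T^2)=\Diff^\infty_0(\T^2)$, and Katok's construction can be chosen in the identity component so that $G\in \Diff^\infty_c(\T^2)$. I would then apply \cref{main2} to this $G$ and the given neighborhood $\cN$ of the identity to obtain $g\in\cN$, a renormalization domain $\Delta\subsetneq \T^2$, a bijective local diffeomorphism $H\colon \Delta\to\T^2$, and a bounded first-return time $\tau\colon\Delta\to\N^*$ with $G=H\circ g^\tau\circ H^{-1}$ and $\bigcup_{k\ge 0}g^k(\Delta)=\T^2$.

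Next I would spread $\mu$ into a $g$-invariant finite measure on $\T^2$. Pull-back gives a smooth $g^\tau$-invariant density $\mu_\Delta:=H^*\mu$ on $\Delta$. Set
\begin{equation*}
\omega \;:=\; \sum_{k\ge 0} (g^k)_*\bigl(\mu_\Delta\cdot\mathbf{1}_{\{\tau>k\}}\bigr),
\end{equation*}
a finite $g$-invariant measure on $\T^2$ (finite because $\tau$ is bounded). Ergodicity of $\omega$ under $g$ is equivalent, via the standard Kakutani correspondence between a tower and its base, to ergodicity of $\mu_\Delta$ under $g^\tau$, which by conjugation through $H$ reduces to the ergodicity of $\mu$ under $G$. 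For the Lyapunov exponent, denoting by $\tau_N(y)$ the $N$-th return time of $y\in\Delta$ and using $G^N=H\circ g^{\tau_N}\circ H^{-1}$, Oseledets applied to $G$ together with Kac's lemma $\tau_N(y)/N\to\bar\tau:=\mu_\Delta(\Delta)^{-1}\int_\Delta\tau\,d\mu_\Delta$ yield
\begin{equation*}
\lim_{n\to\infty}\tfrac{1}{n}\log\bigl\|Dg^n(y)\bigr\|\;=\;\chi_G(H(y))/\bar\tau\;>\;0,
\end{equation*}
where boundedness of $\tau$ and $C^1$-closeness of $g$ to the identity ensure that the limit along the subsequence $\tau_N$ agrees with the true Lyapunov exponent. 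Positivity then propagates to $\omega$-a.e.\ $x\in \T^2$ by $g$-invariance.

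The main obstacle is upgrading $\omega$ from a piecewise-smooth $g$-invariant measure to a genuinely $C^\infty$ volume form: the densities of the pieces $(g^k)_*(\mu_\Delta\mathbf{1}_{\{\tau>k\}})$ are smooth on each floor $g^k(\{\tau>k\})$ separately, but a priori they might not glue smoothly across the boundaries where adjacent floors meet. To resolve this I would rely on the explicit plug-based formula for $\Delta$, $H$ and $g$ supplied by the refined \cref{main} (announced in \cref{subsec plugins}): the construction is tailored so that $H^*\mu$ is the restriction to $\Delta$ of a globally smooth $g$-invariant density on $\T^2$, making the pieces patch into a $C^\infty$ volume form $\omega$. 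Once smoothness is secured, $\omega$ is equivalent to Lebesgue on $\T^2$, so $\omega$-a.e.\ positivity of the Lyapunov exponent upgrades to Lebesgue-a.e.\ positivity, completing the proof.
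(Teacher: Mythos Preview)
Your approach is essentially the same as the paper's: invoke Katok's Bernoulli diffeomorphism $G$, realize it as the output of a close-to-identity plugin $g$ via \cref{main}, and transfer the smooth ergodic hyperbolic volume form back through the Kakutani tower. The paper packages this transfer into a standalone \cref{coro thouvenot generalise}, which records the bijection $\mu_G\leftrightarrow\mu_g$ and the preservation of ergodicity, hyperbolicity, and smoothness, and then applies it in one line; you instead reconstruct the ergodicity and Lyapunov-exponent parts inline (your Kac-lemma computation is equivalent to the paper's remark that the exponents scale by the mean return time).

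One point worth tightening: you correctly flag smoothness of $\omega$ as the main obstacle and correctly attribute its resolution to the plugin structure of \cref{main}, but your sketch (``$H^*\mu$ is the restriction to $\Delta$ of a globally smooth $g$-invariant density'') is circular as stated. The paper's concrete mechanism is that for a plugin one has $\Delta=[0,\sigma)\times M$, the map $g|\Delta=R_\sigma$ is a pure translation, and $H_\sigma(\theta,y)=(\theta/\sigma,y)$; hence $H_\sigma^*\mu_G$ on $\Delta$ and its $R_\sigma$-translate on $g(\Delta)=[\sigma,2\sigma)\times M$ glue smoothly across $\{\sigma\}\times M$ precisely because $\mu_G$ is $\Z$-periodic on $\T^2$. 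Once the density is smooth on $\Delta\cup g(\Delta)$, every point of $\T^2$ has a neighborhood that some $g^{-N}$ carries into $\Delta\cup g(\Delta)$, and $g$-invariance propagates smoothness globally. Invoking \cref{main} rather than the more abstract \cref{main2} from the start would make this explicit and avoid the mid-proof switch.
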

This corollary will be proved in \cref{proof coro thouvenot} from \cref{coro thouvenot generalise}, and generalized to  higher dimension using   \cite{dolgopyat2002every}.
\paragraph{Universal mappings}
A map $f \in \Diff^r( V ) $ is said to be {\em universal} if there exists a dense subset of $ \Diff_0^r(V) $ such that each of its elements is a renormalization of $f$. Bonatti and D\'iaz in \cite{bonatti2003maximal}  have shown that universal  maps are locally $C^1$-generic on $\B^3$.  Turaev in \cite{TUR15} has shown that
 universal  maps are locally $C^\infty$-generic on $\B^2$. Yet mathematicians wondered whether we can ``see" the universality of such mapping. While there are infinitely many renormalization domains, the above proofs lead to a very small volume for the union of  their orbits. 

\begin{corollary} \label{univ}
Let $1\le r\le \infty$ and  $n\ge 2$. For any sequence $(F_i)_{i\ge 0}$ of maps $F_i\in  \Diff_0^r(\T \times [ 0,1] )$ and any sequence of positive numbers $(\ell_i)_{i\ge 0} $ s.t. $\sum \ell_i< 1 $, there exists a $C^r$-arbitrarily close to identity map $f \in \Diff_0^r(\T \times [ 0,1] )$ which displays a family of  renormalization domains $(\Delta_i)_{i\ge 0}$ such that:
\begin{enumerate}
\item a renormalization of $f$ associated to $\Delta_i$ is $F_i$ for every $i\ge 0$, 
\item the orbit $\tilde \Delta_i := \bigcup_{n\ge 0} f^n(\Delta_i)$ has volume equal to $\ell_i$,
\item the  sets $\tilde \Delta_i $ and $\tilde \Delta_j $ are disjoint for $i\neq j$. 
\end{enumerate}
%there exists $C^r$-universal maps whose renormalizations   realize each elements of $D$. 
\end{corollary}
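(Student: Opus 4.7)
The plan is to realize each $F_i$ independently as a renormalization on a disjoint sub-cylinder of $V:=\T\times[0,1]$, and then glue the constructions together. Since $\sum_i\ell_i<1$, pick pairwise disjoint closed intervals $J_i\subset(0,1)$ of length $\ell_i$, separated by non-empty gaps, and set $V_i:=\T\times J_i$. An affine rescaling $\phi_i:V\to V_i$ transports $F_i$ to $F_i':=\phi_i\circ F_i\circ\phi_i^{-1}\in\Diff^r(V_i)$; after a small preliminary collaring one may assume $F_i'\in\Diff_c^r(V_i)$, the collar being absorbable into the final rescaling map.

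Next, apply \cref{main2} (or, more precisely, the explicit version given by \cref{main}) inside each $V_i$ separately, with a shrinking neighborhood $\cN_i$ of the identity. This produces a map $g_i\in\cN_i$ with $g_i=\id$ in a neighborhood of $\partial V_i$, together with a renormalization domain $\Delta_i\subset V_i$ and rescaling $H_i:\Delta_i\to V_i$ such that the first-return renormalization of $g_i$ on $\Delta_i$ equals $F_i'$ and the forward $g_i$-orbit of $\Delta_i$ fills $V_i$. Define $f$ by setting $f:=g_i$ on each $V_i$ and $f:=\id$ on $V\setminus\bigcup_i V_i$; because each $g_i$ is the identity on a neighborhood of $\partial V_i$, this assembles into a single $C^r$-diffeomorphism in $\Diff_0^r(V)$. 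By construction the domains $\Delta_i$ are pairwise disjoint with pairwise disjoint $f$-orbits $\tilde\Delta_i=V_i$ of volume $\ell_i$, and the renormalization of $f$ on $\Delta_i$ equals, after post-composition with $\phi_i^{-1}$, the map $F_i$. This establishes (1), (2), and (3).

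The main obstacle is the quantitative $C^r$-control. Because $\phi_i$ contracts the transverse direction by $\ell_i$, taking $r$ derivatives amplifies the $C^r$-norm by up to $\ell_i^{-r}$; hence the neighborhood $\cN_i$ to which \cref{main2} is applied inside $V_i$ must shrink suitably with $i$, say so that the contribution of $g_i-\id$ to $\|f-\id\|_{C^r(V)}$ is at most $2^{-i}\varepsilon$ for a preassigned $\varepsilon>0$. Since \cref{main2} allows arbitrarily small neighborhoods of the identity, this is always possible, and summing over $i$ yields an $f$ in the prescribed neighborhood $\cN$ of $\id\in\Diff^r(V)$.
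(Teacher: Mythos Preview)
Your argument is essentially the same as the paper's: partition $[0,1]$ into disjoint subintervals $J_i$ of length $\ell_i$, apply \cref{main} on each sub-cylinder $V_i=\T\times J_i$ (equivalently, apply it on $\T\times[0,1]$ and conjugate by the affine rescaling $\phi_i$), and assemble the resulting maps into a single $f$. Your explicit bookkeeping of the $C^r$-blowup by $\ell_i^{-r}$ and the compensating choice of shrinking $\cN_i$ is a welcome addition that the paper leaves implicit.

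One small overstatement: \cref{main} does not give a plugin that equals $\id$ near $\partial V_i$ (even the trivial plugin is the rotation $R_\sigma$, not the identity), so your clause ``$g_i=\id$ in a neighborhood of $\partial V_i$'' is not justified as stated. What is true is that each $g_i$ preserves $V_i$ and, near $\partial V_i$, acts by a circle map independent of $y$; this is enough to interpolate smoothly across the gaps and still keep each orbit $\tilde\Delta_i$ confined to $V_i$. The paper's proof makes the same elision (writing $\supp f_i=\cl(B_i)$), so this is a shared imprecision rather than a defect specific to your write-up.
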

This corollary will be proved in \cref{proof univ}.

The proof of the main theorem is constructive and it seems to us that, in the case where $M$ is  boundaryless,  the map $g$ of \cref{main2}  depends smoothly on $G$ in a neighborhood of the identity. This leads us to propose:
\begin{conjecture}
For every compact boundaryless manifold $M$ of dimension $\ge 1$, there exists  a neighborhood $\cN_0$ of $id \in \Diff^\infty(\T\times M)$ such that for every neighborhood $\cN$ of ${id \in \Diff^\infty(\T\times M)}$, there is a smooth (tame) injective map ${\cal I}: G\in \cN_0\mapsto g\in \cN$ such that $G$ is a total renormalization of $g={\cI}(G)$  for every $G\in \cN_0$. 
\end{conjecture}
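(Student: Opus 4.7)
The plan is to upgrade the explicit construction produced by \cref{main} (and hence by \cref{main2}) into a tame smooth injective assignment $G \mapsto g$, following the template of Hamilton's Nash--Moser framework. The crucial observation is that in \cref{main} the renormalization domain $\Delta \subset \T \times M$, the rescaling map $H \colon \Delta \to V$, and the bounded return-time function $\tau \colon \Delta \to \N^*$ depend only on the target neighborhood $\cN$ and on the ambient manifold, \emph{not} on $G$. Once these data are fixed by a choice of $\cN$, the entire $G$-dependence is packaged into (i) the ``renormalized'' prescription $g = H^{-1} \circ G \circ H$ on one fundamental tile inside $\Delta$, and (ii) a gluing of this tile to itself and to its iterates through cutoffs and interpolations that are themselves chosen a priori.

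The first step would be to record, from the construction behind \cref{main}, an explicit formula of the shape $g = \Phi(G, \Psi_1, \dots, \Psi_N)$ where the $\Psi_i$ are fixed diffeomorphisms and fixed smooth bump/transition functions depending only on $\cN_0$ and $\cN$. Since composition, inversion in a bounded neighborhood of the identity, multiplication by a fixed $C^\infty$ function, and Seeley-type extensions are all classical tame operations between the Fréchet spaces $C^\infty(V)$ and $C^\infty(V \times \cP)$, the assignment $\cI \colon G \mapsto g$ would be a finite composition of tame smooth maps, hence tame smooth. Verifying tameness amounts to checking the inequality
\begin{equation*}
\|\cI(G)\|_{C^{k+\ell}} \le C_k \bigl(1 + \|G\|_{C^{k+\ell}}\bigr)
\end{equation*}
with a uniform loss $\ell$ depending only on $\cN$, together with analogous estimates for the derivatives $D^j \cI$; these are standard once the ingredients of $\Phi$ are fixed.

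Injectivity is the easiest part and would be checked first to isolate the hypothesis on $\cN_0$. If $\cI(G_1) = \cI(G_2) = g$, then because the data $(\Delta, H, \tau)$ are determined solely by $\cN$, each $G_i$ is recovered by the renormalization formula $G_i = H \circ g^{\tau} \circ H^{-1}$, so $G_1 = G_2$. Taking $\cN_0$ so small that the construction stays within the domain where the return time function $\tau$ has the prescribed upper bound will ensure that $\cI$ is globally defined.

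The main obstacle I expect is the \textbf{uniform control of tame estimates across many iterations}: as $\cN$ shrinks, the bound on $\tau$ blows up, and the composition $g^\tau$ in the reconstruction formula risks a loss of derivatives that grows with $\tau$. To keep $\cI$ tame for a \emph{fixed} choice of $\cN$, one must verify that the pieces of $\Phi$ are assembled in an order that avoids iterating the variable map $G$ more than a bounded number of times relative to the fixed auxiliary diffeomorphisms. A secondary difficulty lies near the boundary $\partial \Delta$, where the gluing has to match all derivatives of $g$ at the corners of the fundamental tile; here the cutoffs from the construction of \cref{main} must be chosen once and for all and shown to satisfy tame extension estimates. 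If both of these points can be handled --- which appears plausible given the explicit nature of the construction --- then $\cI$ will be a tame smooth injection in the sense of Hamilton, confirming the conjecture.
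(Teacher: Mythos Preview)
The statement you are addressing is labeled a \emph{conjecture} in the paper: the authors explicitly present it as an open problem motivated by the observation that their construction ``seems'' to depend smoothly on $G$, but they do not claim or attempt a proof. There is therefore no proof in the paper to compare your proposal against.

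Your proposal also contains a genuine gap. You assert that the construction behind \cref{main} yields an explicit formula $g = \Phi(G, \Psi_1, \dots, \Psi_N)$ with all auxiliary data $(\Delta, H, \tau, \Psi_i)$ fixed once $\cN$ is chosen. This is not what the paper does. The only step where an explicit $G$-dependent formula appears is \cref{l.neigh.of.Id} (via \cref{claim.neigh.of.Id}), and that lemma produces a plugin in $\cN$ only for $G$ in a neighborhood $\cN_c$ \emph{that depends on $\cN$} and shrinks with it. The passage from this local construction to arbitrary $G$ (and hence to a fixed $\cN_0$ independent of $\cN$) runs through \cref{mainplug}, whose proof is not a formula: it writes $G$ as a $C^\infty$-limit of long compositions of time-$1/N$ flows and invokes the \emph{closure} of $\P$ (\cref{c.closure}). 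That closure argument is an existence statement, not an assignment, and the number of $\star$-products used grows without bound along the approximating sequence. So your step ``record an explicit formula $\Phi$ from the construction'' is exactly the missing content of the conjecture, not a consequence of the paper.

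The parametric \cref{mainprime} does show that the construction can be carried out smoothly in a \emph{finite-dimensional} parameter $p\in\cP$, but $\cP$ is a compact manifold, not an open set in the Fr\'echet space $\Diff^\infty(\T\times M)$; bridging that gap is again part of what the conjecture is asking.
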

Roughly speaking,  this conjecture asserts that modulo total renormalization, a   fixed neighborhood  $\cN_0$ of $id \in \Diff^\infty(\T\times M)$ can be smoothly embedded into any smaller neighborhood. This defines infinitely many inverse branches of the renormalization operator with image converging to the identity.

\subsection{Sketch of proof}
 \paragraph{Plugins and pluggable dynamics:}
 The framework of the proof of the main theorem relies on a new object called \emph{plugin} and the notion of \emph{pluggable map}.  A plugin is a renormalizable map of a special form, so that it has a canonical renormalization called its \emph{output}. See Def. \ref{def.plugin} and \ref{def.output} below and \cref{fig1,fig2}. We will say that a map is \emph{pluggable} if it is the output of an arbitrarily close to  identity plugin and likewise for its inverse. In particular a pluggable map is a total renormalization of a close to identity map. 
 Most of this work will be dedicated to show   \cref{mainplug} stating that:
 \[ \textit{any map of $\Diff^\infty_c(\T\times M)$ is pluggable.} \]
 The finite regularity  counterpart of \cref{mainplug}  is stated as \cref{main} in \cref{subsec plugins} and will be deduced from \cref{mainplug} in \cref{lieu cas CR}.
 We will deduce Theorems \ref{main1} and \ref{main2} and \cref{coro thouvenot,univ} from \cref{main} in \cref{proof ABC}.

In \cref{subsec top ex} we  precise the topologies of the involved spaces. Also we will show that the following group is formed by pluggable maps, see \cref{twist cP}:

\begin{equation*} \G_1:=  \{ (\theta, y) \in \T \times M    \mapsto  (\theta+ 
 \nu(y) , y ) \in \T \times M  : \nu \in C^\infty_c(M, \T)\}\; .\end{equation*}
\paragraph{Topological group structure on $\P$.}
 It is easier to work with pluggable maps rather than directly the set of  total renormalizations of  close to identity maps. Indeed, we will show  in \cref{group} that the set $\P$ of pluggable maps endowed with the composition rule $\circ$ is a group. To prove this,  we will define in \cref{group section} a binary operation $\star$ on compatible plugins $g_1,g_2$ such that the 
 output of $g_1\star g_2$ is the composition of the outputs of $g_1$ and $g_2$.  See \cref{conca plug}. 
In \cref{section:Dil} we will show that the following group is formed by pluggable maps, see \cref{Dil}:
 \begin{equation*} \G_2:= \left\{ (\theta, y)\in \T\times M   \mapsto (\theta,F(y)) \in \T \times M  : 
F \in \Diff_c^\infty(  M)\right\} \; . \end{equation*} 
Then in \cref{subsec close} we will show that $\P$ is closed in~$\Diff^\infty_c( \T \times M )$, see \cref{c.closure}. 
To prove this, we will construct plugins whose dynamics enlarge an iterate of the renormalization domain and will perform a perturbation therein. Note that the elements of $ \Diff_c^\infty ( \T \times M) $ generated by compositions of elements of $ \G_1 $ and $ \G_2 $ have constant derivatives w.r.t. $\theta$.  Consequently we will need to construct other diffeomorphisms in $ \P $ to prove \cref{mainplug}. 
To do so we will consider the space: 
\[  \diff_c^\infty ( \T \times M)  \text{ of compactly supported vector fields on  } \T \times M  \; ,    \]
and study the space $ \spp $ of vector fields whose flow is pluggable: 
\begin{equation*} \spp:= \{X\in \diff_c^\infty(\T\times M): \Fl^t_X\in \P,\quad \forall t\in \R\}\; ,  \end{equation*} 
where $\Fl^t_X$ denotes the flow of $X$ at time $t$. 
Using that $\P$ is a closed subgroup, we will deduce in \cref{subsec debut vector} that $\spp$ is a closed sub-Lie algebra of $\diff_c^\infty ( \T \times M)$, see \cref{p.sppclose}. 

 Note that the following Lie algebras are formed by fields whose flows are in $\G_1$ or $\G_2$:   
\begin{equation*}  \sg_1  = \lbrace  X_1 : ( \theta , y ) \in \T \times M \mapsto ( v (y) , 0 ) \in \R \times T M  \ : \  v \in C^\infty ( M , \R )   \rbrace   \subset \spp\; ,\end{equation*}  
\begin{equation*} \sg_2 = \lbrace X_2 : ( \theta , y ) \in \T \times M \mapsto (0  ,  f(y)  ) \in \R \times T M  \ : \  f  \in \diff_c^\infty ( M    )    \rbrace\subset \spp \; .\end{equation*} 

\paragraph{Construction of pluggable flows.}
In \cref{subsec proof of main}, using the connectedness of ${\Diff_c^\infty ( \T \times M)}$ and that  $\P$ is a closed group,  we will show that, to prove \cref{mainplug}, it suffices to show:
\begin{equation*} \spp = \diff_c^\infty ( \T \times M)  \; . \end{equation*}  
This equality will be stated in  \cref{p.mainspp}.  Its proof  relies on two phenomena. 
The first one is stated  in \cref{p.adjunctionstability}  as:
\[  \lbrace Y \in \diff_c^\infty ( \T \times M) : \exists X \in \spp \text{ such that }  Y = [ X ,Y ] \rbrace \subset \spp \; , \]
where $ [ \cdot , \cdot ] $ denotes the Lie brackets.
This will be proved in  \cref{Eigen} by noting that for any $Y = [ X ,Y ] $, it holds
$\Fl_{ Y}^t= \Fl_X^{s}\circ \Fl_{ Y}^{t\cdot e^{-s}} \circ\Fl_X^{-s} $. Then by using the same technique as for the proof of the closedness of $ \P $, 
we will show that arbitrarily close to identity there exists a plugin with output $\Fl_{e^{-s} Y}^t$ for any $s$ large enough. 
We will conclude the proof by doing  two $\star$ products of the latter with plugins with outputs $ \Fl_X^{s}$ and $ \Fl_X^{-s}$.

The second phenomenon, stated in \cref{p.pre_deceig}, is that for any vector field $T \in\diff_c^\infty ( M    )   $, there exist finite families $(X_i)_i,(Y_i)_i,(Z_i)_i$ of vector fields in $  \diff_c^\infty ( M    )    $ such that: 
\begin{equation*}  T=\sum_i [Y_i,Z_i]\qand Y_i=[X_i,Y_i]\; .\end{equation*} 
From this we will deduce the same statement for vector fields in $\sg_2$.  To show \cref{p.pre_deceig}, we will remark that when   $M=\R$, for the vector fields  $X : y \in \R \mapsto -y $ and $Y = 1 $, it holds  $ Y = [ X, Y] $ and for any $T\in \diff^\infty_c(\R)$,  with  $ Z = \int_{ - \infty}^y T(t) dt  $, it also holds $ T = [ Y, Z ] $. Then we will deduce a compactly supported and parametric version of this property which will enable us to prove 
 \cref{p.pre_deceig} in the case $M=\R^n$. Finally, we will use a partition of unity to deduce    the proposition for any manifold.
 
These phenomena will enable us to prove \cref{p.hatg2} stating that for any $T\in \sg_2$ and $\phi\in C_c^\infty(\T\times M)$ depending only on $\theta$, the field $\phi\cdot T$ is in $\spp$. Indeed,  by the second phenomenon, there are $X_i,Y_i,Z_i\in \sg_2$ s.t.:
 \begin{equation*}  \phi\cdot T=\sum_i  \phi\cdot  [Y_i,Z_i]= \sum_i     [\phi\cdot  Y_i,Z_i]\qand Y_i=[X_i,Y_i]\; .\end{equation*}  
 Also  a simple computation shows that  $ [\phi\cdot  Y_i,X_i]=\phi\cdot Y_i$. As $X_i$ is in $\sg_2\subset \spp$, we deduce by the first phenomenon that $\phi\cdot Y_i$ is in $\spp$. This gives that  $ \phi\cdot T\in \spp$ as stated in  \cref{p.hatg2}.
 In \cref{subsec proof of main}, we will use Fourier decomposition Theorem and the closedness of $\spp$ to deduce 
\cref{p.hatg2} stating that any vector field of the form $(0, Y(\theta,y))$ is in $\spp$. Finally using this with a Lie bracket with an element of $\sg_1$ will enable us to obtain that any vector field has a pluggable flow ($\spp=\diff^\infty_c(\T\times M)$) as stated by \cref{p.mainspp}. 
\paragraph{Parametric counterparts.}
At the end of each subsection, we will prove a parametric generalization of the aforementioned statements. This will enable us to show the parametric counterpart  \cref{mainplug cP} of  \cref{mainplug}. It will imply the 
parametric counterparts  \cref{mainprime} of  \cref{main} and \cref{main2prime} of  \cref{main2}.  

 \section{Plugins and Pluggable dynamics} \label{sec plugins and pluggable dynamics} 
 \subsection{Plugins} \label{subsec plugins}
 \begin{center} \emph{For the rest of this article, we fix $1\le r\le \infty$ and  compact connected manifolds $M$ and $\cP$ of $\dim\ge 1$. }\end{center} 
 For $\sigma>0$, define the rotation: 
\begin{equation*} R_\sigma: (\theta,y)\in \T \times M\to (\theta+\sigma ,y)  \in \T\times M \; . \end{equation*} 
We are now ready to introduce: \label{lieu plugin}
\begin{definition} \label{def.plugin}
 A {\em plugin}  with {\em step} $\sigma\in   \{2^{-k} : k\ge 1 \} $ is a map $g\in  \Diff^r(\T\times M)$ satisfying the following assertions:
\begin{enumerate}[$(i)$]
\item     $g$ restricted to $\Delta_\sigma:=[0,\sigma) \times M$ is equal to $R_\sigma$,% translation $(x,y)\mapsto (x+\sigma, y)$,
\item  the first  return time in $ \Delta_\sigma$ of $g $  is a well defined and bounded  function  $\tau: \Delta_\sigma\to \N^*$,
\item the union of the iterates $\bigcup_{k\ge 0} g^k(\Delta_\sigma)$ equals $\T\times M$.
\end{enumerate}
\end{definition}
\begin{remark}
One can show by compactness of $M$ that, under condition $ ( i) $, condition $(ii)$ is equivalent to $ ( iii ) $ and that in $ (ii ) $ the return time is necessarily bounded.
\end{remark}
\begin{figure}[H]
\begin{center}{\footnotesize \hskip -2mm\raisebox{8ex}{$\T\times M\colon$} {\def\svgwidth{68pt}          %\'echelle du dessin
 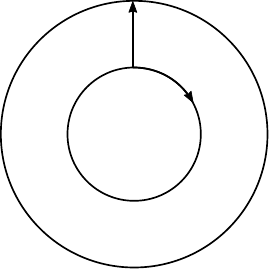}%\pause
\hskip 7mm \raisebox{-0ex}{\def\svgwidth{82pt}          %\'echelle du dessin
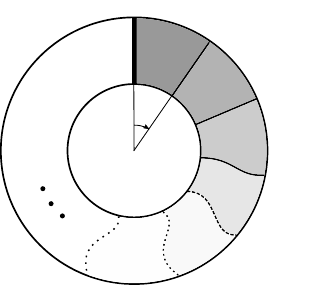}%\pause
\hskip 2mm \raisebox{-0.3ex}{\def\svgwidth{68pt}          %\'echelle du dessin
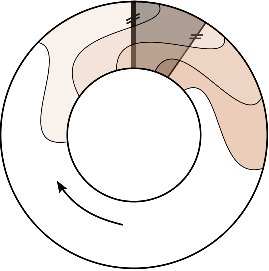}}\end{center}
\caption{Plugin $g$ of step $\sigma$.}\label{fig1}
\end{figure}
Let $H_\sigma:= (\theta,y)\in \Delta_\sigma\mapsto (\theta/\sigma,y)\in \T\times M$.  It is a bijective local diffeomorphism.
\begin{figure}[H]
\begin{center}{\footnotesize\def\svgwidth{120pt} \raisebox{-2.4ex}{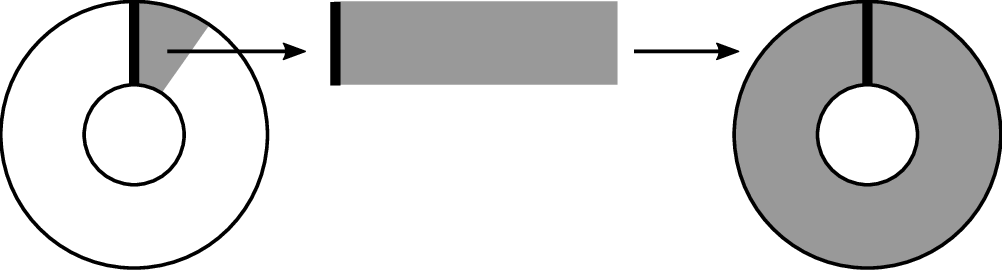}
}\end{center}
\caption{Rescaling map $H_\sigma\colon \Delta_\sigma\to \T\times M$.}\label{fig2}
\end{figure}

\begin{definition}\label{def.output}
The {\em output} of a plugin $g$ of step $\sigma$ is the following rescaling of the first return map $g^\tau \colon\Delta_\sigma\to \Delta_\sigma$: \begin{equation*} G:=H_\sigma\circ g^\tau\circ H^{-1}_\sigma\colon \T\times M\to \T\times M.\end{equation*} 
\end{definition}
 \begin{example}\label{exem id0}
For every $k\ge 0$, the map 
$g_{k}: (\theta, y)\mapsto (\theta+2^{-k}, y)$ is a plugin of step $2^{-k}$, iteration $2^k$ and output the identity.
\end{example}
Actually, we can show that the output of a plugin is always smooth: 
\begin{proposition}\label{p.diffeocontinuity}
Let $1\le r\le \infty$. The output of a plugin $g\in \Diff^r(\T\times M)$ is in $\Diff^r(\T\times M)$ and depends continuously on $g$. In particular, the output is a total renormalization of the plugin.
\end{proposition}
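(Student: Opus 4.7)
The plan is to derive smoothness of $G$ from the rigidity of $g$ on the closed slab $\overline{\Delta_\sigma} = [0, \sigma] \times M$ combined with the identity $H_\sigma \circ R_\sigma \equiv H_\sigma$ in $\T \times M$ (since the intertwining translation $(1, 0)$ in $\R \times M$ is trivial in $\T$).

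First I would record that condition $(i)$ together with $g \in \Diff^r$ forces $g = R_\sigma$ on $\overline{\Delta_\sigma}$, with $D^k g = D^k R_\sigma$ pointwise on $\overline{\Delta_\sigma}$ for every $0 \le k \le r$ (by $C^r$-continuity of the derivatives from both sides of $\partial\Delta_\sigma = \{0, \sigma\} \times M$). In particular $g - R_\sigma$ has vanishing $r$-jet along $\partial\Delta_\sigma$. Then I would stratify $\Delta_\sigma$ by $D_N = \tau^{-1}(N)$ and establish the combinatorial observation that $\tau$ can jump at a point $x_0$ only if $g^{\tau(x_0)}(x_0) \in \{0\} \times M$: if one had $g^j(x_0) = (\sigma, y_0)$ with $1 < j \le \tau(x_0)$, then $g^{j-1}(x_0) = g^{-1}(\sigma, y_0) = (0, y_0) \in \Delta_\sigma$ would contradict $\tau(x_0) > j - 1$. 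On each $D_N$, $G$ is locally $G = H_\sigma \circ g^N \circ H_\sigma^{-1}$, a smooth map.

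The main obstacle is the smooth gluing across such a $\tau$-jump (and at the analogous seam $\{0\} \times M \subset \T \times M$). At a jump with $g^N(x_0) = (0, y_0)$, the low side uses $G_+ = H_\sigma \circ g^N \circ H_\sigma^{-1}$ (with $\tau = N$); a direct Taylor computation using $Dg|_{(0, y_0)} = \id$ shows the high side has $\tau = N + 1$ with $G_- = H_\sigma \circ g^{N+1} \circ H_\sigma^{-1}$. Letting $u$ be a coordinate on $\T \times M$ at $H_\sigma(x_0)$ and $z(u) = g^N(H_\sigma^{-1}(u))$ (a smooth function of $u$ with $z(0) = (0, y_0)$), the difference $G_-(u) - G_+(u) = H_\sigma(g(z(u))) - H_\sigma(z(u))$ equals, in $\T \times M$, the image under the linearization of $H_\sigma$ of $g(z(u)) - R_\sigma(z(u))$ (using $H_\sigma(R_\sigma(z)) \equiv H_\sigma(z)$). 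By the $r$-jet matching of $g$ and $R_\sigma$ at $z(0) \in \partial\Delta_\sigma$, this difference is $O(|u|^{r+1})$, so $G_+$ and $G_-$ agree to order $r$. The seam analysis is analogous, using $g(0, y) = (\sigma, y)$ to identify the two limiting formulas, and completes the proof that $G \in \Diff^r(\T \times M)$.

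Continuity of $G$ in $g$ then follows from the explicit local formula and continuity of composition/inversion in $\Diff^r$. The total renormalization statement is immediate from condition $(iii)$, which gives $\bigcup_{k \ge 0} g^k(\Delta_\sigma) = \T \times M$.
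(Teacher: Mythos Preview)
Your approach is essentially correct but genuinely different from the paper's. The paper takes the classical Douady--Ghys route: it lifts $g$ to $\tilde g$ on $\R\times M$, shows the $\Z$-action generated by $\tilde g$ is free, proper and discontinuous, so that the quotient $C=(\R\times M)/\langle\tilde g\rangle$ is a $C^r$-manifold with $\Delta_\sigma$ as fundamental domain and $H_\sigma$ inducing a diffeomorphism $C\simeq\T\times M$. Since the integer translation $\tilde T:(\theta,y)\mapsto(\theta+1,y)$ commutes with $\tilde g$, it descends to a $C^r$-diffeomorphism of $C$, which one checks is $G^{-1}$. This bypasses entirely the case analysis of $\tau$-jumps and the seam: smoothness comes for free from the quotient manifold structure.

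Your direct gluing argument also works, but two points deserve tightening. First, the estimate ``$O(|u|^{r+1})$'' is not what one gets from a $C^r$ function with vanishing $r$-jet at a point; Taylor only gives $o(|u|^r)$. The cleaner (and stronger) observation you nearly make is that $G_--G_+$ is itself $C^r$ and vanishes \emph{identically} on the $\tau=N$ side, since there $z(u)\in\Delta_\sigma$ forces $g(z(u))=R_\sigma(z(u))$ exactly. Hence all derivatives of $G_--G_+$ up to order $r$ vanish along the jump hypersurface, and the glued map is $C^r$ by the standard half-space gluing lemma. The same remark applies at the seam. Second, you should at least acknowledge the corner case where the seam meets a $\tau$-jump (i.e.\ $g^N(0,y_0)\in\{0\}\times M$); there one has three or four local formulas to compare rather than two. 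This is handled automatically by the quotient argument, which is the main advantage it buys over yours.
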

This proposition is a consequence of a classical, yet beautiful, argument which will be recalled in \cref{lieu preuve p.diffeocontinuity}. We are now ready to state the general result:
\label{lieu thm D}
\begin{theo}[Main]
\label{main} Let $1\le r\le \infty$ and a compact manifold $M$ of dimension $\geq 1$, and let $\cN\subset \Diff^r (\T\times M)$  be a neighborhood of the identity. Then any $G\in \Diff^r_c(  \T\times M)$ is the output of some plugin $g_G\in \cN$.   
\end{theo}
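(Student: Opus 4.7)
The plan is to reduce \cref{main} to showing that $\Diff^\infty_c(\T\times M)\subset \P$, where $\P$ denotes the set of pluggable maps, i.e.\ outputs of plugins arbitrarily close to the identity. The finite-regularity case then follows by $C^\infty$-approximation of $G\in \Diff^r_c$ combined with the continuity of the output operation provided by \cref{p.diffeocontinuity}.

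First I would establish that $\P$ is a closed subgroup of $\Diff^\infty_c(\T\times M)$. For the group structure, given two plugins $g_1,g_2$ of a common step $\sigma$, I would construct a concatenated plugin $g_1\star g_2$ of step $\sigma/2$ whose two copies of the fundamental strip $\Delta_{\sigma/2}$ carry the dynamics of $g_1$ and $g_2$ respectively, so that the output of $g_1\star g_2$ equals the composition of the two outputs. For closure, given a sequence of outputs of plugins converging in $\Diff^\infty_c$, I would design an auxiliary plugin whose iterates expand a small renormalization subdomain until it nearly fills $\T\times M$, leaving room to absorb the limit via a small perturbation supported inside these enlarged iterates without violating conditions $(i)$--$(iii)$ of \cref{def.plugin}.

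Next I would pass to the Lie algebra $\spp=\{X\in\diff^\infty_c(\T\times M):\Fl^t_X\in\P\ \forall t\in\R\}$. Since $\P$ is a closed subgroup, $\spp$ is automatically a closed Lie subalgebra of $\diff^\infty_c(\T\times M)$. By connectedness of $\Diff^\infty_c(\T\times M)$, every element is a finite composition of time-one flows, so showing $\spp=\diff^\infty_c(\T\times M)$ would close the proof. As warm-ups, I would verify directly that the two explicit Lie algebras $\sg_1$ and $\sg_2$ lie in $\spp$ by exhibiting plugins with outputs $(\theta,y)\mapsto(\theta+t\,\xi(y),y)$ and $(\theta,y)\mapsto(\theta,\Fl^t_f(y))$ arbitrarily near the identity, building on the idea of \cref{exem id0}.

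The equality $\spp=\diff^\infty_c(\T\times M)$ would be obtained by combining two phenomena. The \emph{adjunction stability} lemma asserts that if $X\in\spp$ and $Y$ satisfies $Y=[X,Y]$, then $Y\in\spp$: the conjugation identity $\Fl^t_Y=\Fl^s_X\circ\Fl^t_{e^{-s}Y}\circ\Fl^{-s}_X$ makes $\Fl^t_{e^{-s}Y}$ arbitrarily close to the identity for $s$ large, so $\Fl^t_Y$ arises as a $\star$-product of three plugins once the closure of $\P$ is available. The \emph{commutator decomposition} lemma asserts that any compactly supported vector field $T$ on $M$ is a finite sum $\sum_i[Y_i,Z_i]$ with $Y_i=[X_i,Y_i]$; the model $M=\R$ uses $X=\partial_y$, $Y=y\,\partial_y$ and $Z=\bigl(\int_{-\infty}^y T\bigr)\partial_y$, which extends to $\R^n$ parametrically and to general $M$ via a partition of unity. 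Combining these with the identity $[\phi\cdot Y_i,X_i]=\phi\cdot Y_i$ for $\phi=\phi(\theta)$ gives $\phi\cdot T\in\spp$ for every $T\in\sg_2$; a Fourier expansion in $\theta$ together with the closure of $\spp$ then yields every vertical field $(0,Y(\theta,y))\in\spp$, and a final bracket with $\sg_1$ supplies the horizontal components. I expect the principal obstacle to be the closure step for $\P$: the Lie-algebraic framework and the Fourier limiting argument all rest on the ability to approximate arbitrary pluggable maps by explicit plugins of controlled step, and the design of the enlarging plugin together with the perturbation bookkeeping it licenses is the most delicate construction of the whole scheme.
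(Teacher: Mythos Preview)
Your outline matches the paper's architecture almost exactly: the $\star$-product for the group structure, the zoom-in plugin for closure, the passage to the Lie algebra $\spp$, adjunction stability via the conjugation identity, the commutator decomposition of $\diff^\infty_c(M)$, and the Fourier argument to reach all of $\sg_3$ before bracketing with $\sg_1$.

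There is however one genuine gap in your reduction to the $C^\infty$ case. Continuity of the output map (\cref{p.diffeocontinuity}) goes the wrong way: it tells you that nearby plugins have nearby outputs, not that nearby outputs are realized by nearby plugins. Approximating $G\in\Diff^r_c$ by smooth $\tilde G$ gives you a plugin with output $\tilde G$, not $G$. The paper closes this by observing that the same ``enlarging plugin'' lemma you sketch for the closure of $\P$ actually holds in $C^r$ for every $1\le r\le\infty$ and yields a $C^r$-plugin with \emph{any prescribed} $C^r$-close-to-identity output (this is \cref{l.neigh.of.Id}). One then writes $G=\tilde G\circ(\tilde G^{-1}\circ G)$, takes a $C^\infty$ plugin for $\tilde G$ from \cref{mainplug}, a $C^r$ plugin for the small correction $\tilde G^{-1}\circ G$ from this lemma, and $\star$-concatenates. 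So your closure ingredient is exactly what is needed, but it must be stated in $C^r$ and invoked a second time here.

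Two minor slips: in the conjugation identity the signs are $\Fl^t_Y=\Fl_X^{-s}\circ\Fl^{t e^{-s}}_Y\circ\Fl_X^{s}$ (your $s$ and $-s$ are swapped), and in the one-dimensional model you have $X$ and $Y$ interchanged --- the eigenvector of $\ad_X$ is the constant field $Y=\partial_y$ with $X=-y\,\partial_y$, and then $Z=\int_{-\infty}^y T$ gives $[Y,Z]=T$. With your choice $X=\partial_y$, $Y=y\,\partial_y$ one gets $[X,Y]=X$, not $Y$.
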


And here is its parametric counterpart:
\begin{theoprime}\label{mainprime}
Let $1\le r\le \infty$ and a compact manifold $M$ of dimension $\ge 1$,
fix a compact manifold $\cP$ and a compactly supported family $(G_p)_{p\in \cP}$ in $ \Diff_c^r ( \T \times M ) _\cP$. 
Let ${\cN\subset   \Diff^r (\T \times M)_\cP}$ be a neighborhood of $(id)_{p\in \cP}$.
Then there exists a $C^r$-family of plugins $( g_p )_{ p \in \cP} \in \cN$ such that 
$G_p$ is the output of $g_p$ for every $p\in \cP$. 
\end{theoprime}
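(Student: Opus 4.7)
The plan is to mimic the strategy for \cref{main} in the parametric category, treating $p\in\cP$ as a passive variable throughout. A \emph{parametric plugin} is a family $(g_p)_{p\in\cP}\in\Diff^r(\T\times M)_\cP$ such that each $g_p$ is a plugin in the sense of \cref{def.plugin}. By continuity of $p\mapsto g_p$, discreteness of $\{2^{-k}\}$, and connectedness of $\cP$, the step $\sigma$ is automatically independent of $p$. The first return time $\tau_p$, being integer-valued and continuous in $p$, is similarly independent of $p$ (locally, and then globally by compactness of $\cP$). Hence the output $G_p := H_\sigma\circ g_p^{\tau}\circ H_\sigma^{-1}$ is a $C^r$-family and the parametric analogue of \cref{p.diffeocontinuity} is immediate from its proof.

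I then define $\P_\cP\subset \Diff^\infty_c(\T\times M)_\cP$ as the set of $C^r$-families arising as outputs of arbitrarily close-to-identity parametric plugins (together with their fiberwise inverses). Every step of the proof of the announced \cref{mainplug} (pluggability of $\Diff^\infty_c(\T\times M)$) must be redone parametrically. The $\star$-product of two compatible parametric plugins is defined fiberwise, and its output is the composition of the outputs, so $\P_\cP$ is closed under composition. The enlargement-and-perturbation argument used to prove that $\P$ is closed in $\Diff^\infty_c(\T\times M)$ is local in $p$, so it goes through verbatim for $\P_\cP$. Finally, the parametric analogues
\[
\G_{1,\cP} = \{(\theta,y,p)\mapsto (\theta+\Xi(y,p),y) : \Xi\in C^\infty_c(M\times\cP,\R)\},
\]
\[
\G_{2,\cP} = \{(\theta,y,p)\mapsto (\theta,F_p(y)) : (F_p)_{p\in\cP}\in\Diff^\infty_c(M)_\cP\}
\]
are contained in $\P_\cP$ by the same explicit plugin constructions that establish $\G_1,\G_2\subset \P$.

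The Lie-algebraic core of the argument then proceeds by introducing
\[
\spp_\cP = \{X\in\diff^\infty_c(\T\times M)_\cP : \Fl^t_X\in\P_\cP \text{ for all } t\in\R\},
\]
which is a closed sub-Lie algebra of $\diff^\infty_c(\T\times M)_\cP$ by the closed-subgroup property of $\P_\cP$. The eigenvalue identity $\Fl^t_Y = \Fl^s_X\circ\Fl^t_{e^{-s}Y}\circ\Fl^{-s}_X$ is pointwise in $p$, so the parametric version of \cref{p.adjunctionstability} holds without new ideas. The main technical obstacle is the parametric version of \cref{p.pre_deceig}: for any $T\in\diff^\infty_c(M)_\cP$ one must produce finite $C^\infty$-families $(X_i)_i,(Y_i)_i,(Z_i)_i\in\diff^\infty_c(M)_\cP$ with $T=\sum_i[Y_i,Z_i]$ and $Y_i=[X_i,Y_i]$. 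The Euclidean model sketched for the non-parametric case ($X=\partial_{y_1}$, $Y=y_1\partial_{y_1}$, $Z=\bigl(\int_{-\infty}^{y_1} T(t,y_2,\ldots)\,dt\bigr)\partial_{y_1}$) accepts passive parameters $p$ without modification; a partition of unity on $M\times\cP$ subordinate to trivializing charts reduces the general case to this model. Combining this with the identities $\phi\cdot T=\sum_i[\phi\cdot Y_i,Z_i]$ and $[\phi\cdot Y_i,X_i]=\phi\cdot Y_i$ for $\phi=\phi(\theta)\in C^\infty(\T)$, then Fourier-expanding in $\theta$ and invoking closure of $\spp_\cP$, yields the parametric analogue of \cref{p.mainspp}, namely $\spp_\cP = \diff^\infty_c(\T\times M)_\cP$.

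To conclude, connectedness of $\Diff^\infty_c(\T\times M)_\cP$, which is precisely the isotopy hypothesis built into the definition of $\Diff^r_c(V)_\cP$, together with the closed-subgroup property of $\P_\cP$, gives $\P_\cP=\Diff^\infty_c(\T\times M)_\cP$; this is the parametric counterpart \cref{mainplug cP} of \cref{mainplug}. The finite-regularity statement \cref{mainprime} then follows by first approximating $(G_p)_{p\in\cP}$ in $\Diff^r(\T\times M)_\cP$ by a smooth family, constructing the parametric plugin smoothly, and using continuous dependence of the output on the plugin to control the final distance to $(G_p)_{p\in\cP}$, in exact parallel with the deduction of \cref{main} from \cref{mainplug} announced in the sketch.
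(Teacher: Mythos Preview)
Your proposal is essentially the paper's own strategy: establish the parametric $C^\infty$ result $\P_\cP=\Diff^\infty_c(\T\times M)_\cP$ by running the Lie-algebraic machinery fiberwise in $p$, then descend to finite regularity. The ingredients you list (parametric $\star$-product, closure of $\P_\cP$, $\G_{i,\cP}\subset\P_\cP$, the eigenvector trick, the parametric decomposition $T=\sum[Y_i,Z_i]$ with $Y_i=[X_i,Y_i]$, Fourier in $\theta$) match the paper's parametric counterparts one-for-one.

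There is one genuine imprecision in your final step. You write that the $C^r$ case follows by smoothing $(G_p)_p$ to a $C^\infty$ family, building a plugin for that, and then ``using continuous dependence of the output on the plugin to control the final distance to $(G_p)_{p\in\cP}$.'' Continuous dependence goes the wrong way: it tells you that nearby plugins have nearby outputs, not that a prescribed nearby output is realized by some plugin. What is actually needed is the parametric version of the enlargement-and-perturbation lemma you already invoked for the closure of $\P_\cP$ (the paper's \cref{l.neigh.of.Id cP}): it furnishes, for any $C^r$-family $(H_p)_p$ sufficiently close to identity, a close-to-identity parametric plugin whose output is \emph{exactly} $(H_p)_p$. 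One then takes $H_p=\tilde G_p^{-1}\circ G_p$, obtains a plugin $g_{0,p}$ with output $H_p$, and $\star$-products it with the $C^\infty$ plugin $g_{1,p}$ for $\tilde G_p$ to get a plugin with output precisely $G_p$. Your earlier paragraph shows you have this tool in hand; you just need to deploy it here rather than appeal to continuity of the output map.
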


\subsection{Proof of the corollaries of the main  \cref{main}}\label{proof ABC} \label{subsec proof of corollaries and main}
Observe that \cref{main} implies immediately \cref{main2} and that \cref{mainprime} implies immediately \cref{main2prime}. In this subsection we show that 
\cref{main} implies furthermore \cref{main1} and   \cref{coro thouvenot,univ}. To this end, the following will be useful:

\begin{fact} \label{fact inv}
If the output of a plugin restricted to $\{0\}\times M$ is the identity, then the return time $\tau $ of the plugin is constant on the renormalization domain  $\Delta$. 
\end{fact}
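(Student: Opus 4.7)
Writing $p_y := \tau(0,y)$, the hypothesis $G|_{\{0\}\times M}=\id$ translates through $G = H_\sigma\circ g^\tau\circ H_\sigma^{-1}$ to $g^{p_y}(0,y)=(0,y)$ for every $y\in M$. Thus $(0,y)$ is periodic, and since $\{0\}\times M\subset\Delta_\sigma$ while $g^i(0,y)\notin\Delta_\sigma$ for $0<i<p_y$ by definition of first return, $p_y$ is exactly the minimal period of $(0,y)$. The plan is to show that $\tau$ is locally constant on the connected manifold $\Delta_\sigma=[0,\sigma)\times M$, hence constant (connectedness of $\Delta_\sigma$ follows from the standing assumption that $M$ is connected).

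\paragraph{Boundary analysis at $\{0\}\times M$.} First I would show that for $1<i<p_y$ the iterate $g^i(0,y)$ lies in the \emph{open} set $(\sigma,1)\times M$, and not merely in the closed complement $[\sigma,1)\times M$ of $\Delta_\sigma$. The sole unavoidable boundary visit at intermediate time is $g^1(0,y)=(\sigma,y)\in\{\sigma\}\times M$, and a visit $g^i(0,y)\in\{\sigma\}\times M$ for some $i\geq 2$ would give, using that by item $(i)$ of \cref{def.plugin} $g^{-1}$ restricted to $g(\Delta_\sigma)=[\sigma,2\sigma)\times M$ coincides with $R_{-\sigma}$, the inclusion $g^{i-1}(0,y)\in\{0\}\times M\subset\Delta_\sigma$, contradicting $p_y>i-1$. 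For $(\theta,y)$ with $\theta>0$ small, continuity of $g^i$ then forces $g^1(\theta,y)=(\theta+\sigma,y)\in(\sigma,2\sigma)\times M$ and $g^i(\theta,y)\in(\sigma,1)\times M$ for $1<i<p_y$, both outside $\Delta_\sigma$. The return iterate $g^{p_y}(\theta,y)$ lies near $(0,y)\in\partial\Delta_\sigma$; it belongs to $\Delta_\sigma$ precisely when $\partial_\theta(g^{p_y})_1|_{(0,y)}>0$. I would derive this positivity from orientation preservation of $G$: since $G$ is the identity on the codimension one submanifold $\{0\}\times M$, the cross derivatives satisfy $\partial_y G_1(0,y)=0$ and $\partial_y G_2(0,y)=\mathrm{id}_{T_yM}$, whence $\det DG|_{(0,y)}=\partial_{\tilde\theta} G_1|_{(0,y)}=\partial_\theta(g^{p_y})_1|_{(0,y)}$, and $\det DG>0$ yields the desired sign. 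Thus $\tau\equiv p_y$ in a neighborhood of $(0,y)$.

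\paragraph{Interior points and conclusion.} For a point $(\theta_0,y)\in(0,\sigma)\times M$ with $\tau(\theta_0,y)=n$, the same use of $g^{-1}|_{[\sigma,2\sigma)\times M}=R_{-\sigma}$ rules out $g^i(\theta_0,y)\in\{\sigma\}\times M$ for $0<i<n$, while $g^i(\theta_0,y)\in\{0\}\times M\subset\Delta_\sigma$ is excluded by minimality of $\tau$. The return iterate cannot hit $\{0\}\times M$ either: since $G$ is a diffeomorphism equal to the identity on $\{0\}\times M$, so is $G^{-1}$, hence $G^{-1}(\{0\}\times M)=\{0\}\times M$, and $g^n(\theta_0,y)\in\{0\}\times M$ would force $\theta_0=0$. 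All iterates $g^i(\theta_0,y)$ for $0<i<n$ therefore lie in the open set $(\sigma,1)\times M$ and the return iterate lies in the open set $(0,\sigma)\times M$; continuity then gives $\tau\equiv n$ in a neighborhood of $(\theta_0,y)$. Combining the two statements, $\tau$ is locally constant on all of $\Delta_\sigma$, and connectedness forces $\tau$ to be globally constant.

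\paragraph{Main obstacle.} The delicate step is the boundary analysis at $\{0\}\times M$: because $g^1(0,y)=(\sigma,y)$ always sits on $\partial\Delta_\sigma$, continuity alone cannot decide on which side of $\partial\Delta_\sigma$ the nearby orbit lies. Both ingredients that save the argument — the rotation structure $g|_{\Delta_\sigma}=R_\sigma$, which provides the explicit inverse $R_{-\sigma}$ used to rule out additional boundary visits, and orientation preservation of $G$ extracted from the block-triangular form of $DG|_{(0,y)}$ — are indispensable there.
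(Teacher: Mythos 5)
Your overall strategy (prove $\tau$ is locally constant everywhere on $\Delta_\sigma$ and then invoke connectedness of $\Delta_\sigma$, rather than the paper's route via lower semi-continuity on $\Delta$ and upper semi-continuity on the interior) is sound, and your interior analysis is correct: ruling out visits to $\{\sigma\}\times M$ via $g^{-1}|_{g(\Delta_\sigma)}=R_{-\sigma}$, and ruling out a return to $\{0\}\times M$ via $G^{-1}(\{0\}\times M)=\{0\}\times M$, places the whole orbit in open sets, so local constancy near interior points follows by continuity. That part is in fact essentially the same idea the paper uses for upper semi-continuity on $\operatorname{int}\Delta$.

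The gap is in the boundary analysis. You assert the chain $\det DG|_{(0,y)}=\partial_{\tilde\theta}G_1|_{(0,y)}=\partial_\theta(g^{p_y})_1|_{(0,y)}$. The first equality is fine (block lower-triangular structure because $G$ fixes $\{0\}\times M$ pointwise). The second equality, however, is not justified and is where the circularity lies: $G$ is the \emph{rescaled first-return map}, so $G$ agrees with $H_\sigma\circ g^{p_y}\circ H_\sigma^{-1}$ only on the set $\{\tau = p_y\}$. To identify $\partial_{\tilde\theta}G_1(0,y)$ with $\partial_\theta(g^{p_y})_1(0,y)$ you need to already know that $\tau\equiv p_y$ on a one-sided $\theta$-neighborhood of $(0,y)$ in $\Delta_\sigma$ — but that is exactly the conclusion you are trying to derive from the positivity of that derivative. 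The preliminary work only gives $\tau\geq p_y$ near $(0,y)$, which does not exclude that $\tau>p_y$ for all small $\theta>0$ (so that $G$ near $(0,y)$ is built from some higher iterate of $g$, whose $\theta$-derivative at $(0,y)$ has a priori nothing to do with $\partial_\theta(g^{p_y})_1(0,y)$). A genuine argument must first pin down the value of $\tau$ in the $\theta>0$ direction (for instance, by running your interior argument first to get $\tau\equiv N$ on $\operatorname{int}\Delta_\sigma$, so that $\partial_{\tilde\theta}G_1(0,y)=\partial_\theta(g^N)_1(0,y)$ legitimately), and then separately show that $p_y=N$; the jump from orientation preservation of $G$ to the sign of $\partial_\theta(g^{p_y})_1(0,y)$ does not follow as written. (For what it is worth, the paper itself is quite terse on this boundary point — it disposes of it with the single remark that orientation preservation reduces the statement to constancy on the interior — so the difficulty you flag is real, but your proposed resolution does not close it.)
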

\begin{proof}As $f$ preserves the orientation it suffices to show that $\tau $ is constant on the interior of $\Delta$. 
As $\tau $ is integer valued and $M$ is connected, it suffices to show that $ \tau $ is continuous on $int\, \Delta$ to conclude the proof.
We start by showing that  $ \tau $ is lower semi-continuous on $\Delta $. Suppose that it is not the case. Then there exist $N\ge 2$, a point $x\in \Delta$ and a point $ x' \in \Delta $  arbitrarily close to $x$  such that  $ N = \tau( x') < \tau (x) $. 
By continuity of   $ g $, it holds $ g^N ( x) \in \cl( \Delta) $. By assumption, we have $ g^N(x) \notin \Delta $. Thus $ g^N(x) \in \lbrace \sigma \rbrace \times M $ and since $g$ is the translation by $ \sigma $ on $ \Delta $ it holds then that $g^{N-1} ( x) \in \Delta$ and consequently $ \tau(x) < N $, which contradicts the assumption. 

We now show that $ \tau $ is upper semi-continuous on $int\, \Delta$.
Consider $ x \in int\,\Delta$. As $g^\tau$ is a bijection of $\Delta$ which leaves invariant $\{0\}\times M$, it comes that $g^\tau(x)$ is in the interior of $\Delta$. 
%If $g^{\tau(x)} (x)$ is in the interior of $\Delta$, 
Then for every $x'$ close to $x$, the iterate  $g^{\tau(x)} (x')$ belongs to $\Delta$ and so $\tau(x')\le \tau(x)$. 
%Otherwise $g^{\tau(x)} (x)$ is in $\{0\}\times M$. As $g^\tau$ is a bijection of $\Delta$ which leaves invariant $\{0\}\times M$, it comes that $x\in \{0\}\times M$. 
%Also  $\tau$ is constant on the connected set $ \lbrace 0 \rbrace \times M $ ; denote $ n $ its value on $ \lbrace 0 \rbrace \times M $.
%Thus,  to conclude the proof, it suffices to show  the existence of a neighborhood of $\partial \Delta$ in $\Delta$ which is sent into $\Delta$ by $g^{n}$.
%
%For the sake of contradiction assume the existence of a sequence $x_k\in int\,  \Delta \to x$ which is sent  outside of $\Delta$ by $g^n$. Then  $(x_k)_k$ must approach $x$ by the right hand side of $\{0\}\times M\ni x$, while $(g^n(x_k))_k$ approaches $g^n(x)$ by the left hand side of $\{0\}\times M\ni g^n(x)$.  Up to extracting a subsequence, there is a curve $\gamma: \T \to \T\times M$ which contains all the points $x_n$ so that $\gamma(0)=x$ and which projects by $p_\theta: \T\times M\to \T$ to a degree one map $p_\theta\circ \gamma$. As $g^n$ is homotopic to the identity, the map $p_\theta\circ g^n\circ \gamma$  has also degree one. 
%As $  g^n $ preserves $ \lbrace 0 \rbrace \times M $, the curve  $\theta \in \T \mapsto g^n ( \theta , y) $ intersects $ \lbrace 0 \rbrace \times M $ exactly once; this must be at $g^n\circ \gamma(0)$. So nearby $0$, the map   $p_\theta\circ g^n\circ \gamma$ must be non-decreasing. This  contradicts that $(g^n(x_k))_k$ approaches $g^n(x)$ by the left hand side of $\{0\}\times M\ni g^n(x)$.
\end{proof}
\begin{proof}[Proof that \cref{main} implies \cref{main1} \label{proof main1}] 
Let $G$ be in $\Diff^r(  \bB^n)= \Diff^r_0(  \bB^n)$. 
We observe\footnote{ \label{fn 1}By connectedness, there exists 
an isotopy $(h_t)_{t\in [0,1]}$ between   $G|\partial \B^n$ and 
$id_{\partial \B^n}$ that can be chosen $C^r$-smooth, made of diffeomorphisms and  and flat at the endpoints. Note that we can extend $G$ on $2\cdot \B^n\setminus \B^n$ by $ h_{\|z\|-1} \left(\frac{z}{\|z\|} \right)$ to construct and element of $\Diff^r_0(2\cdot \bB^n)$.} that $G$ is the restriction to $\B_n$  of a diffeomorphism $\bar G$ in $\Diff^r_c(  2\cdot \bB^n)$. As $2 \cdot id$ conjugates  $\bar G$ to a map in  $\Diff^r_c(   \bB^n)$,
without any loss of generality we can assume that $G$ belongs to $\Diff^r_c(   \bB^n)$. 

Let $M:=\bB^{n-1}$ and embed $i: \bB^n\hookrightarrow \T\times M$ so that the embedded ball $i(\B^n)$  does not meet $\{0\}\times M$ nor the boundary of $\T\times M$. Extend then $G$ by $\id$ to a diffeomorphism $\tilde{G}\in \Diff_0^r(\T\times M)$. By \cref{main}, $\tilde{G}$ is the output of a plugin $\tilde{g}\in  \Diff^r(\T\times M)$ arbitrarily close to $\id$. Since $\tilde G $ leaves $ \lbrace 0 \rbrace \times M  $ invariant it holds by \cref{fact inv} that the first return time $\tau$ of the plugin  $ \tilde g  $ is a constant $N$. In particular, the restriction $\tilde g^N| i(\B^n)$ is conjugate to $G$.    

\begin{figure}[H]
\begin{center}
{\footnotesize \begin{equation*} \tilde{G}\colon\!\!\raisebox{-6ex}{\def\svgwidth{55pt}           %\'echelle du dessin
 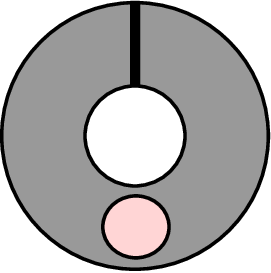}%\pause 
 \mbox{ is}\underset{\mbox{$\tilde g$ close to $\id$}}{\mbox{ the output of } \tilde g}\colon \raisebox{-6ex}{\def\svgwidth{55pt}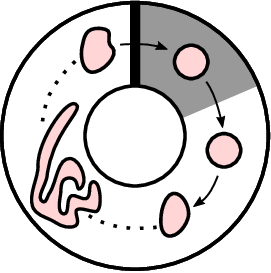}%\pause 
 \lhook\joinrel\xrightarrow{\begin{array}{c}\mbox{embed}\\\mbox{in }\B^d\end{array}} \raisebox{-6ex}{\def\svgwidth{55pt}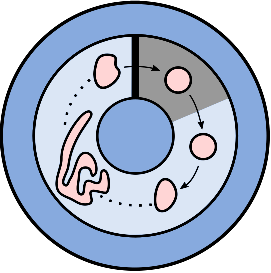}%\pause
 \begin{array}{c}\mbox{Extend to }\\g\in \Diff(\B^d)\end{array} \end{equation*} }\end{center}
\caption{Proof of \cref{main1}.}\label{fig3}
\end{figure}
To conclude it suffices to embed $\T\times M$ into the interior of $\B^n$ and extend $\tilde g$ then  the diffeomorphism $\tilde{g}$ to a diffeomorphism $g$ of $\B^n$ that is close to $\id$. \end{proof}
 
The following enables us to localize near the identity the existence of some ergodic properties:
\begin{proposition}\label{coro thouvenot generalise}
Let $g$ be a plugin of step $\sigma$ and $G$ its output. Then there is a canonical bijection $\mu_g\to \mu_G$ between  $G$-invariant probability  measures and  $g$-invariant probability measures:
\[ \mu_g\mapsto \mu_G:= H_{\sigma*} \frac{\mu_g|\Delta}{\mu_g(\Delta)} \qand \mu_G\mapsto \mu_g:= \sum_{N\ge  0} g^{*N} H_\sigma ^*\mu_G |\{\tau > N\}\; .\]
 Moreover:
 \begin{enumerate}
 \item  $ \mu_g$ is ergodic  iff $\mu_G $ is ergodic,
 \item $ \mu_g$ is hyperbolic   iff $\mu_G $ is hyperbolic. 
 \item  $\mu_g$ is a smooth  volume form iff   $\mu_G $ is a smooth volume form,
\end{enumerate}
\end{proposition}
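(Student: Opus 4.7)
The plan is to view $g$ as a Kakutani tower over its first return map $g^\tau\colon \Delta_\sigma \to \Delta_\sigma$ with bounded roof $\tau$, and to view $G$ as the smooth conjugate of $g^\tau$ via the bijective local diffeomorphism $H_\sigma$. The bijection in the statement then factors as the composition of the classical tower correspondence (between $g$-invariant probabilities on $\T\times M$ and normalized $g^\tau$-invariant probabilities on the section $\Delta_\sigma$) with the conjugation by $H_\sigma$. The central tool is the partition
\[
\T \times M \;=\; \bigsqcup_{N \ge 0} g^N(\{\tau > N\}),
\]
which is pairwise disjoint by the definition of first return time (if $g^N(x) = g^{N'}(x')$ with $N < N'$ and $x,x' \in \Delta_\sigma$, then $g^{N'-N}(x') = x \in \Delta_\sigma$ contradicts $\tau(x') > N'$), covers $\T \times M$ by applying the hypothesis $\bigcup_k g^k(\Delta_\sigma) = \T \times M$ to the minimal such $k$, and is finite by boundedness of $\tau$.

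From this I would verify well-definedness on probabilities: any $g$-invariant probability $\mu_g$ satisfies $\mu_g(\Delta_\sigma) \ge 1/\|\tau\|_\infty > 0$, and the tower sum $\sum_N g^N_* (H_\sigma^* \mu_G |_{\{\tau > N\}})$ has finite total mass $\int \tau\, dH_\sigma^* \mu_G \le \|\tau\|_\infty$, hence normalizes to a probability. That the two assignments are mutual inverses is a direct computation using the partition, and the invariance of each image reduces to the standard Kac--Kakutani principle: the restriction of a $g$-invariant measure to $\Delta_\sigma$ is automatically $g^\tau$-invariant, and the tower suspension of a $g^\tau$-invariant measure is automatically $g$-invariant; conjugation by $H_\sigma$ then transports invariance between $g^\tau$ and $G$.

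For (1), ergodicity transfers through both steps: the maps $A \mapsto A \cap \Delta_\sigma$ and $B \mapsto \bigcup_N g^N(B \cap \{\tau > N\})$ give a measure-theoretic bijection between $g$- and $g^\tau$-invariant sets preserving ergodicity, and $H_\sigma$ transfers this to $G$. For (2), I would use $G^n \circ H_\sigma = H_\sigma \circ g^{S_n}$, where $S_n(x) := \sum_{k=0}^{n-1} \tau\bigl((g^\tau)^k(x)\bigr)$; since $H_\sigma$ is affine with constant nonsingular derivative and Birkhoff applied to the $G$-invariant measure gives $S_n/n \to \bar\tau \in [1, \|\tau\|_\infty]$ $\mu_G$-almost everywhere, the Oseledets exponents of $G$ at $H_\sigma(x)$ are exactly those of $g$ at $x$ multiplied by $\bar\tau > 0$, so nonvanishing of exponents is equivalent on both sides. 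For (3), the forward direction is immediate since $\mu_G$ is the pushforward of the normalized $\mu_g|_{\Delta_\sigma}$ under the affine diffeomorphism $H_\sigma$; for the reverse, $\mu_g|_{\Delta_\sigma} = H_\sigma^* \mu_G$ is smooth, and the $g$-invariance identity $\rho_g \circ g = \rho_g / |\Jac g|$ combined with $g$ being a global $C^r$ diffeomorphism propagates this smoothness along the forward $g$-orbits of $\Delta_\sigma$, which cover $\T \times M$.

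The main technical point I expect is part (3): the piecewise tower formula makes $\mu_g$ visibly smooth on each open piece $g^N(\{\tau > N\})$, but smoothness across the boundary sets $g^N(\partial \{\tau > N\})$ is not manifest from the formula itself. The invariance-propagation argument resolves this cleanly: the relation $\rho_g \circ g = \rho_g / |\Jac g|$ uniquely extends a smooth density on $\Delta_\sigma$ to a smooth density on all of $\T \times M$, and the plugin property $g|_{\Delta_\sigma} = R_\sigma$ (unit Jacobian) together with the smoothness of $\rho_G$ across the identified endpoints of $\T$ ensures this extension is internally consistent at $\theta = 0$.
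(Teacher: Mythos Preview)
Your proof is correct and follows essentially the same approach as the paper: reduce to the classical Kakutani tower correspondence between $g$-invariant measures and $g^\tau$-invariant measures on the section $\Delta_\sigma$, then transport via the conjugacy $H_\sigma$. The paper is terser---it cites a reference for ergodicity and simply states that the Lyapunov exponents scale by the mean return time---while you spell out the Birkhoff argument and the tower partition explicitly.

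One small slip worth fixing: in part~(3) you call $H_\sigma$ an ``affine diffeomorphism'' and declare the direction $\mu_g$ smooth $\Rightarrow$ $\mu_G$ smooth to be ``immediate''. But $H_\sigma\colon [0,\sigma)\times M \to \T\times M$ is only a bijective \emph{local} diffeomorphism, so the pushed-forward density is a priori only smooth on $(0,1)\times M$; one must still check that it glues smoothly across $\theta=0$ in $\T$. This again uses the plugin property $g|_{\Delta_\sigma}=R_\sigma$ together with $g$-invariance of $\mu_g$, which forces the density and all its jets to agree at $\theta=0$ and $\theta=\sigma$---exactly the observation the paper makes for this direction. You clearly see the mechanism (you invoke it for the reverse implication in your final paragraph), so this is a presentation oversight rather than a conceptual gap. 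For the reverse direction the paper uses the slight variant of propagating smoothness from the \emph{open} set $\Delta_\sigma\cup g(\Delta_\sigma)$ rather than from $\Delta_\sigma$ alone, which sidesteps having to argue consistency at the seam; your invariance-propagation argument achieves the same end.
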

\begin{proof}
We recall that $H_\sigma$ conjugates $G$ with the first return time $g^\tau$ of  $g$ in $\Delta$. Then it is sufficient to prove the analogous properties for the following  
classical  bijection between   $g^\tau$-invariant probability  measures and  $g$-invariant probability measures: 
\[ \mu_{g^\tau}\mapsto \mu_g:= \sum_{N\ge  0} g^{*N} \mu_{g^\tau} |\{\tau > N\}\qand\mu_g\mapsto \mu_{g^\tau}:=  \frac{\mu_g|\Delta}{\mu_g(\Delta)} \; .  \]
Indeed it is well known that $\mu_{g^\tau}$ is ergodic iff $\mu_g$ is ergodic (see for instance \cite[Lemma 2.43]{einsiedler2013ergodic}), this implies 1.  
Also it well known that the Lyapunov exponent of $\mu_G$ are equal to the mean of $\tau$ times the Lyapunov exponent of $\mu_g$, from which we deduce 2. 

Let us prove 3. If $\mu_{G}$ is smooth, then $\mu_g|\Delta= \mu_{g^\tau}$ is smooth on $\Delta$. It follows that ${\mu_g|\Delta\cup g(\Delta)= \mu_{g^\tau}+g^*\mu_{g^\tau}}$ is smooth on $\Delta\cup g(\Delta)$.
Now observe that for every $x\in \T\times M$ there exists a neighborhood $U$ and $N\ge 0$ such that $g^{-N} (U)\subset \Delta\cup g(\Delta)$. Then by invariance, the density of $\mu_g$ on $U$ is as smooth as $\mu_g|\Delta\cup g(\Delta)$.  Conversely, if $\mu_g$ is smooth, then  $\mu_{g^\tau}:=  \frac{\mu_g|\Delta}{\mu_g(\Delta)}$ must be smooth. Since $g$ is a plugin, the density $\mu_g$ at the left hand side of $\Delta$  is equal to the translation by  $(\sigma, 0)$ of the density of $\mu_g$ at the right hand side of $cl(\Delta)$. Thus $\mu_{g^\tau}$ is pushed forward by $H_\sigma$ to a smooth volume form $\mu_G$ on $\T\times M$. 
\end{proof}
\begin{proof}[Proof of \cref{coro thouvenot}]
\label{proof coro thouvenot}
Katok in  \cite{katok1979bernoulli}[Thm B] showed the existence of Bernoulli diffeomorphisms  homotopic to the identity and preserving a given smooth measure on any surface. Hence there exists   $G\in \Diff^\infty_{0} (\T^2)$ preserving a smooth  Bernouilli volume form $\mu_G$. Equivalently  by the Pesin Theorem the measure $\mu_G$ is a volume form which is ergodic and hyperbolic.  
By \cref{main}, arbitrarily close to the identity, there exists $g$ whose output is $G$.  Then by  \cref{coro thouvenot generalise}, the volume form $\mu_G$ induces an  ergodic  and hyperbolic volume form $\mu_g$ for $g$. 
\end{proof}

We now proceed to: 
\begin{proof}[Proof of \cref{univ}] \label{proof univ}
We will construct a sequence $ (\Delta_i)_{i \ge 1 } $ of domains with disjoint closures and of the form $ \Delta_i := [ 0, \sigma_i) \times (s_i, s_i + l_i)   \subset \T \times [ 0,1] $ with $  0 < \sigma_i < 1  $ and $ 0 < s_i < 1-l_i $. Then we will be able to construct a map $f$ that renormalizes to $F_i$ on each $ \Delta_i$ and such that the orbit of the renormalization domain $ \Delta_i$ is exactly $ \T \times ( s_i, s_i + l_i)$. Moreover we will be able to take   $f$ arbitrarily   close to identity. 

To do so, consider a sequence $ ( l'_i)_{i \ge 0} $ such that $ \sum_i l'_i < 1 $ and $ l_i' > l_i $ for any $i \ge 0$. 
For $i \ge 0 $ denote    $ s_i  := \sum_{j < i } l'_j$ and $ B_i := \T \times [ s_i, s_i + l_i ] $. Note that $ B_i$ has volume $l_i$ and that $ \cl ( B_i )   \cap \cl (B_j) = \emptyset $ for any $ i \ne j \ge 0$. Let us define the map $ \psi_i : ( \theta ,y) \in \T \times [0,1]  \mapsto ( \theta , s_i + l_i \cdot y )  $ that sends $ \T \times [ 0,1]  $ to  $ \cl ( B_i )  $.  Now by \cref{main}, for any $ i \ge 0$, there exists a plugin $ \tilde f_i :  \T \times  [ 0,1 ] \to  \T \times  [ 0,1 ] $ arbitrarily close to 
identity with renormalization domain with output $F_i$. Let  step $ \sigma_i $ be its step. The map  $  f _i := \psi_i  \circ \tilde f _i \circ \psi_i^{-1}  : B_i  \to B_i    $ is close to identity. Also a renormalization of $f_i$ associated to $ \Delta_i := [0, \sigma_i)  \times [ s_i, s_i + l_i ] $ is $F_i$.
Since the supports $ \supp f_i =  \cl ( B_j )   $  are disjoint and each $f_i$ is close to identity, there exists a close to identity map $f : \T \times [0,1] \to  \T \times [ 0,1]$ that coincides with  $f_i$ on $ B_i$. Such a map verifies the conditions of the \cref{univ}. 
 \end{proof} 

\subsection{Pluggable dynamics} \label{subsec top ex}
We will first work in the $C^\infty$-topology. Indeed, the proof relies on the fact that $\Diff^\infty(V)$ is a Fr\'echet Lie group, which is not the case of $\Diff^r(V)$ for $r<\infty$. The $C^r$ case will be deduced  from the $C^\infty$ case in \cref{lieu proof main}.
\subsubsection{Topologies on spaces of smooth maps and parameter families}\label{sec:def f cp}
We endow $V$ with a Riemannian metric.  Let  $\diff^\infty(V)$ be the space of  smooth vector fields  on $V$  that are tangent to the boundary (if any). 
We  endow $\Diff^\infty( V)$ and 
 the space $\diff^\infty(V)$  with the following distances: 
 \begin{equation*} 
d_{C^\infty}(f,g)= \max_{x\in V }\sum_{k\ge 1} 2^{-k} \min (1, \|D_x^k f - D^k_x  g \|)  \end{equation*} 
and
\begin{equation*} 
d_{C^\infty}(X,Y)= \max_{x\in V }\sum_{k\ge 1} 2^{-k} \min (1, \|D_x^k X-D^k_x  Y\|)\; . \end{equation*} 
For these distances $ \Diff^\infty(   V)$ and  $ \diff^\infty(V)$ are  complete. Actually $ \Diff^\infty(   V)$ is a Lie group with algebra the Fr\'echet space $\diff^\infty(V)$. 
We endow the connected component $\Diff^\infty_0(V)$  of the identity with the topology induced by $\Diff^\infty(V)$.  

On the other hand, we endow the space $\Diff^\infty_c(   V)$ and the space of compactly supported smooth vector fields $\diff^\infty_c(V)$ with the finer {\em Whitney topology}. A basis of open sets of these respective topologies is:  
\begin{equation*} U_{\eta, f,m} := \{g\in \Diff^\infty_c(   V):  \| D_x^k f - D^k_x  g \| \le \eta(x), \forall k\le m\} \end{equation*}  
and
\begin{equation*} 
U_{\eta, X,m} := \{Y\in \diff^\infty_c(   V):  \|D_x^k X- D^k_x  Y\|\le \eta(x), \forall k\le m\}\; , \end{equation*}  
among $m\in \N$, $f\in \Diff^\infty_c(   V)$, $X\in \diff^\infty_c(   V)$,  and %positive
continuous functions ${\eta: V\setminus \partial V \to (0,\infty)}$.   A well known theorem asserts that  $f_n\to f$ in  $\Diff^\infty_c(   V)$ if and only if   there exists a compact subset $K\subset   V\setminus \partial V$  such that the supports of $f$ and $f_n$ are included in $K$ for  $n$ and 
for every $r\ge 1$, we have $f_n\to f$ in the uniform $C^r$-topology when $ n \to \infty$.  The analogous property holds true for  $\diff^\infty_c(   V)$. 

We endow $ \Diff^r( V)_\cP$ and $ \Diff_c^r( V)_{\cP} $ with the topologies induced by $ \Diff^r( V \times \cP ) $ and $ \Diff_c^r( V \times \cP ) $ with the inclusions:
\begin{equation*} \Diff^r( V)_{\cP} \hookrightarrow  \Diff^r( V \times \cP )  \qand \Diff_c^r( V)_{\cP} \hookrightarrow  \Diff_c^r( V \times \cP ) 
\end{equation*} 
via $ f_\cP = ( f_p) _{p \in \cP} \mapsto  \widehat {f_\cP}$ where:
\begin{equation*} \widehat {f_\cP}   := ( v , p ) \in V \times \cP  \mapsto ( f_p ( v) , p)\; .\end{equation*}
The spaces $ \Diff^r ( V)_\cP$ and $ \Diff_c^r( V)_\cP$ endowed with the composition law $ f_\cP \circ g_\cP := ( f_p \circ g_p )_{ p \in \cP }  $ are groups. 

 Note that we have: 
\begin{fact}
\label{morphism cP}
The following inclusion is a morphism of topological group:
\begin{equation*} 
  f_\cP = ( f_p) _{p \in \cP} \in \Diff^\infty_0 ( V ) _\cP  \mapsto  \widehat {f_\cP} \in \Diff^\infty_0 ( V \times \cP )\; .\end{equation*}   
\end{fact}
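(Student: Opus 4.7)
Since this is stated as a \emph{fact}, I expect the content to be bookkeeping, and the plan is just to record the three small ingredients.

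First, I would verify algebraically that the inclusion is a group homomorphism: for any $f_\cP,g_\cP\in\Diff^\infty_0(V)_\cP$ and any $(v,p)\in V\times\cP$ one has
\begin{equation*}
\widehat{f_\cP\circ g_\cP}(v,p)=\bigl((f_p\circ g_p)(v),p\bigr)=\widehat{f_\cP}\bigl(g_p(v),p\bigr)=\widehat{f_\cP}\circ\widehat{g_\cP}(v,p),
\end{equation*}
together with $\widehat{(\id)_{p\in\cP}}=\id_{V\times\cP}$. Thus the hat map respects the composition law on parameter families (which is why the composition on $\Diff^\infty(V)_\cP$ defined in the paper is well posed in the first place) and sends the identity to the identity.

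Second, I would observe that continuity is essentially tautological: the paper has defined the topology on $\Diff^\infty(V)_\cP$ to be the one pulled back from $\Diff^\infty(V\times\cP)$ via this very inclusion, so the map is by construction a topological embedding. Combined with the first point and with the continuity of composition in the Fr\'echet Lie group $\Diff^\infty(V\times\cP)$, which transfers through the embedding to continuity of composition on $\Diff^\infty(V)_\cP$, this exhibits the map as a continuous group homomorphism between topological groups, i.e.\ a morphism of topological groups.

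Third, I would check that the image actually lands in $\Diff^\infty_0(V\times\cP)$: given $f_\cP\in\Diff^\infty_0(V)_\cP$, pick a continuous path $t\in[0,1]\mapsto f_{\cP,t}$ from $(\id)_{p\in\cP}$ to $f_\cP$ inside $\Diff^\infty(V)_\cP$. By the continuity established in step two, applying the hat map produces a continuous path $t\mapsto\widehat{f_{\cP,t}}$ from $\id_{V\times\cP}$ to $\widehat{f_\cP}$ inside $\Diff^\infty(V\times\cP)$, witnessing $\widehat{f_\cP}\in\Diff^\infty_0(V\times\cP)$.

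There is no genuine obstacle here: the claim distills to checking that the identifications chosen for parameter families respect both the group and the topological structures, which is built into the definitions. The only mildly nontrivial point is confirming that the induced composition on $\Diff^\infty(V)_\cP$ is continuous, which, as noted, is inherited from continuity of composition in the ambient Fr\'echet Lie group via the embedding.
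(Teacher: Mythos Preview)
Your verification is correct and complete. The paper states this as a \emph{Fact} with no proof whatsoever, treating it as self-evident bookkeeping; you have supplied exactly the natural three-step check (algebraic homomorphism, continuity tautological from the pulled-back topology, image in the identity component via pushing forward a connecting path), which is what anyone unpacking the statement would do.
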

The latter fact will be used among some proofs of the parametric counterparts. 

From now on we will work with $ V = \T \times M$. 

\subsubsection{The space $\P$ of Pluggable maps} 
\label{lieu pluggable}
\emph{Every time, but when explicitly stated, we will focus on the case $r=\infty$}: plugins will be of class $C^\infty$ and their outputs as well. 
Recall that we endow $\T=\R/\Z$ with the  Euclidean Riemannian metric, $M$ and $\cP$  with their Riemannian metric. The product spaces $\T\times M$, $\T\times M\times \cP$ and $M\times \cP$  are endowed with their product Riemannian metric.   
 \begin{definition}\label{def_pluggable}
A map $G\in \Diff^\infty_c(\T\times M)$ is {\em semi-pluggable} if there is a sequence  $(g_k)_{k\ge 1}$ of plugins  $g_k\in \Diff^\infty(\T\times M)$  with step $2^{-k}  $, so that for every large integer $k$ 
the output of $g_k$ is  $G$ and  $g_k\to \id$ for the $C^\infty$-topology when $ k \to \infty$. 
The map $G$ is {\em pluggable} if $G$ and $G^{-1}$ are semi-pluggable. Let: 
\begin{equation*} \P:= \{G\in   \Diff^\infty_c(\T\times M): G \text{ is pluggable}\}\; .\end{equation*}  
\end{definition}
\begin{example}\label{exem id}
The identity $\id$ of $\T\times M$ is pluggable since it is the output of every plugin of the sequence  $ { \left( ( \theta, y ) \in \T \times M \mapsto ( \theta + 2^{ -k} , y)  \right)_{k \ge 1} } $ as in \cref{exem id0}.
\end{example}
We give in \cref{sec:Examples of pluggable dynamics} more sophisticated examples of pluggable dynamics.
Note that in \cref{def_pluggable},  we  are only interested by plugins whose output is compactly supported (outside of the boundary). This is because not every mapping of  $\Diff^\infty_0(\T\times M)$ is pluggable by \cref{cexample1}, while we will show that 
every map in $\Diff^\infty_c(\T\times M)$ is pluggable:
\begin{theo}\label{mainplug}We have $  \P= \Diff^\infty_{c}(\T\times M)$. \end{theo}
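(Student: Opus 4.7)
The plan is to reduce \cref{mainplug} to the Lie-algebra statement $\spp = \diff^\infty_c(\T\times M)$ and then build up $\spp$ in stages. Since $\Diff^\infty_c(\T\times M)$ is path-connected and generated by time-one flows of compactly supported vector fields, once I know that $\P$ is a closed subgroup of $\Diff^\infty_c(\T\times M)$ and that every compactly supported field has pluggable flow, a standard connectedness argument inside the Fr\'echet Lie group $\Diff^\infty_c(\T\times M)$ produces every element of $\Diff^\infty_c(\T\times M)$ as a finite composition, hence as an element of $\P$. So my first preparatory tasks are: (a) show $\P$ is closed under composition (via the $\star$-concatenation of compatible plugins whose outputs compose) and inversion, and closed in the Whitney topology, which allows me to define $\spp$ and verify it is a closed Lie subalgebra of $\diff^\infty_c(\T\times M)$; (b) directly produce plugins realizing the twist group $\G_1$ and the product group $\G_2$, giving $\sg_1,\sg_2\subset\spp$.

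The heart of the argument is then to enlarge $\spp$ beyond $\sg_1+\sg_2$ using two structural phenomena. The first I would establish is \emph{adjunction stability}: whenever $X\in\spp$ and $Y=[X,Y]$ one has $Y\in\spp$. The identity $\Fl^t_Y = \Fl_X^s\circ\Fl_{e^{-s}Y}^t\circ\Fl_X^{-s}$ is the key point: for large $s$ the field $e^{-s}Y$ is arbitrarily small, so I can build a near-identity plugin with output $\Fl^t_{e^{-s}Y}$, then $\star$-concatenate with plugins realizing $\Fl_X^{\pm s}$ (available because $X\in\spp$). The second phenomenon is a purely algebraic decomposition on $M$: every $T\in\diff^\infty_c(M)$ can be written as
\[ T=\sum_i [Y_i,Z_i],\qquad Y_i=[X_i,Y_i], \]
with $X_i,Y_i,Z_i\in\diff^\infty_c(M)$. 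I would first verify this on $\R$ by taking $X=\partial_y$, $Y=y\,\partial_y$ (so $[X,Y]=Y$) and $Z=\bigl(\int_{-\infty}^y T\bigr)\partial_y$ (so $[Y,Z]=T$), then make the construction parametric with compact support, extend it to $\R^n$ coordinate by coordinate, and finally glue over $M$ with a partition of unity.

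Combining the two ingredients proceeds as in the sketch. For $T\in\sg_2$ and $\phi\in C^\infty_c(\T\times M)$ depending only on $\theta$, the decomposition gives
\[ \phi\cdot T=\sum_i \phi\cdot[Y_i,Z_i]=\sum_i [\phi\cdot Y_i,\,Z_i], \]
since $Z_i\in\sg_2$ does not differentiate $\phi(\theta)$. A direct computation also yields $[\phi\cdot Y_i,X_i]=\phi\cdot Y_i$ because $X_i\in\sg_2$ again ignores $\phi$, so adjunction stability (applied with $X_i\in\sg_2\subset\spp$) puts $\phi\cdot Y_i\in\spp$, whence $\phi\cdot T\in\spp$. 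Expanding a general function in a Fourier series in $\theta$ and invoking closedness of $\spp$ then shows that every vector field of the form $(0,Y(\theta,y))$ lies in $\spp$. Taking Lie brackets against suitable fields in $\sg_1$ produces the missing $\theta$-components and finally gives $\spp=\diff^\infty_c(\T\times M)$, which completes the reduction.

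The step I expect to be the main obstacle is the algebraic decomposition lemma, namely producing the \emph{simultaneous} expressions $T=\sum_i[Y_i,Z_i]$ and $Y_i=[X_i,Y_i]$ in a way that is compatible with compact support and with gluing over a manifold. The one-dimensional model is transparent, but extending it to $\R^n$ while keeping the eigenvalue relation $Y_i=[X_i,Y_i]$ intact, and then patching across chart boundaries through a partition of unity without breaking this relation, is where the bookkeeping becomes delicate. Everything else --- the $\star$-product of compatible plugins, closedness of $\P$, the Fourier argument, and the final bracket with $\sg_1$ --- should be a matter of carefully executing constructions whose shape is already prescribed by the list above.
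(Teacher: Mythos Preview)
Your proposal follows the paper's proof essentially step for step: the reduction via closedness of $\P$ to $\spp=\diff^\infty_c(\T\times M)$, the $\star$-product, adjunction stability through the conjugation identity, the decomposition lemma on $M$, the Fourier argument for $(0,Y(\theta,y))$, and the final bracket with $\sg_1$ are exactly the paper's ingredients in the same order. One small slip to fix in your one-dimensional model: with $X=\partial_y$ and $Y=y\,\partial_y$ one computes $[X,Y]=\partial_y=X$, not $Y$; the paper instead takes $X=-y\,\partial_y$, $Y=\partial_y$, and then $Z=\bigl(\int_{-\infty}^y T\bigr)\partial_y$ gives $[X,Y]=Y$ and $[Y,Z]=T$ as required (and similarly your bracket $[\phi\cdot Y_i,X_i]$ carries the wrong sign, which is harmless once you swap $X_i\mapsto -X_i$).
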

Observe that the first assertion of  \cref{main} in the case $r=\infty$ is an immediate consequence of \cref{mainplug}. 
\paragraph{Parametric counterpart.}
The proof of \cref{mainplug}  is (basically) constructive and depends smoothly on the output. For the sake of completeness, we will verify this by giving the parametric counterpart of each statement. Some of the proofs will be designed to be verbatim the same. For a first reading of the proofs, we advise the  reader to skip the all  parametric counterpart of the arguments. 
Here is the parametric counterpart of the notion of plugin:
\begin{definition} A family $g_\cP= (g_p)_{p\in \cP} \in  \Diff^\infty(\T\times M)_\cP$ defines a {\em $\cP$-plugin} if the diffeomorphism: 
$$\widehat { g_\cP} : (z,p)\in M\times \cP\mapsto (g_p(x), p)$$
in $\Diff^\infty( \T \times   M  \times \cP)$ is a plugin. 
\end{definition}
Similarly we have the parametric counterpart of the output of a $ \cP$-plugin: 
\begin{definition}\label{def.outputcP}
The {\em  output} of a $\cP$-plugin $g_\cP \in \Diff^\infty(\T\times M)_\cP$ of step $\sigma$ is the  family $G_\cP $ such that  for each $ p \in \cP$, the map  $G_p$  is the  rescaling of the return map $g_{p}^{\tau_p} \colon\Delta_\sigma \to \Delta_\sigma$:
\begin{equation*} G_p := H_\sigma\circ g_p^{\tau_p}\circ H^{-1}_\sigma\colon \T\times M\to \T\times M.\end{equation*} 
\end{definition}
By \cref{p.diffeocontinuity} the output  $\widehat{ G_\cP} $ of the plugin $\widehat{g_\cP}  $  on $ \T \times M \times \cP $ is smooth and so:
\begin{fact}
The output of a $ \cP $-plugin of $ \T \times M $ lies in $ \Diff_0^r ( \T \times V)_{ \cP } $.
\end{fact}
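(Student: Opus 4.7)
The plan is to deduce this fact from Proposition~\ref{p.diffeocontinuity}, in two steps.

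First, I would apply \cref{p.diffeocontinuity} directly to $\widehat{g_\cP}$, viewed as a plugin on the base $\T\times(M\times\cP)$. This yields that the output $\widehat{G_\cP}$ is a smooth diffeomorphism of $\T\times M\times\cP$, depending continuously on $\widehat{g_\cP}$. Since $\widehat{g_\cP}$ preserves the projection $(x,p)\mapsto p$ by construction, and since the rescaling map $H_\sigma$ does not involve the $\cP$-coordinate, the first return map of $\widehat{g_\cP}$ into $\Delta_\sigma\times\cP$ and its rescaling also preserve this fibration. Hence $\widehat{G_\cP}(x,p)=(G_p(x),p)$, and by the definition of $\Diff^r(\T\times M)_\cP$ this shows $G_\cP\in\Diff^r(\T\times M)_\cP$.

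Second, for the refinement to the identity component $\Diff_0^r(\T\times M)_\cP$, I would exhibit a continuous path of $\cP$-plugins connecting $g_\cP$ to the trivial $\cP$-plugin $(R_\sigma)_{p\in\cP}$, whose output is the constant identity family $(\id)_{p\in\cP}$ (\cref{exem id0}). Pulling back along the continuous output map supplied by \cref{p.diffeocontinuity} yields a continuous path from $G_\cP$ to $(\id)_{p\in\cP}$ in $\Diff^r(\T\times M)_\cP$, placing $G_\cP$ in the connected component $\Diff_0^r(\T\times M)_\cP$.

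The main obstacle is the construction of the path of $\cP$-plugins in the second step: each intermediate map must still be a plugin with step $\sigma$, that is, equal $R_\sigma$ on $\Delta_\sigma$, have bounded first return time into $\Delta_\sigma$, and satisfy the covering condition $\bigcup_{k\ge 0}g^k(\Delta_\sigma)=\T\times M$. I would attack this at the level of the induced first-return map $g_p^{\tau_p}\colon \Delta_\sigma\to\Delta_\sigma$: isotope the family of first-return maps to the identity within $\Diff^r(\Delta_\sigma)_\cP$, and then reconstruct intermediate plugins by interleaving these interpolated return maps with the translation $R_\sigma$ according to the combinatorics of $\tau$. Since the integer-valued return-time function $\tau$ can be held constant along the interpolation, every intermediate map remains a $\cP$-plugin with the required step.
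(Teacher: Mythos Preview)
Your first step is exactly what the paper does: it applies \cref{p.diffeocontinuity} to $\widehat{g_\cP}$ viewed as a plugin on $\T\times(M\times\cP)$, obtaining that $\widehat{G_\cP}$ is a $C^r$-diffeomorphism and hence $G_\cP\in\Diff^r(\T\times M)_\cP$. The fibration-preservation argument you give is correct.

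For the refinement to the identity component, the paper takes a shorter route than you do: the last sentence of the proof of \cref{p.diffeocontinuity} already asserts that the space of plugins (of a given step) is connected, so by continuity of the output map every output lies in the component of $\id$. Applying this to $\widehat{g_\cP}$ gives the result directly. Your instinct to produce an explicit path of $\cP$-plugins is reasonable, but the specific construction you sketch has two problems.

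First, the ``reconstruction'' step is not well-posed. A plugin is not determined by its first-return map and its return-time function~$\tau$; there are infinitely many ways to fill in the dynamics on the complement of $\Delta_\sigma$ compatible with a given pair $(\phi,\tau)$, and there is no canonical ``interleaving with $R_\sigma$'' that produces a smooth diffeomorphism of $\T\times M$ from this data. So holding $\tau$ fixed and varying the return map does not, by itself, yield a path of plugins.

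Second, and more seriously, the step ``isotope the family of first-return maps to the identity within $\Diff^r(\Delta_\sigma)_\cP$'' is circular. Via the rescaling $H_\sigma$, the first-return map on $\Delta_\sigma$ \emph{is} the output $G_p$ on $\T\times M$; an isotopy of the former to $\id$ is precisely an isotopy of $G_\cP$ to $(\id)_{p\in\cP}$ in $\Diff^r(\T\times M)_\cP$, which is what you are trying to establish.

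If you want an explicit path, the cleaner approach is to work at the level of the plugin itself rather than its return map: write $g_p=R_\sigma\circ h_p$ with $h_p$ equal to the identity on $\Delta_\sigma$, and isotope $h_p$ to $\id$ through diffeomorphisms that are the identity on $\Delta_\sigma$ (this uses that such diffeomorphisms are supported in a product $[\sigma,1]\times M$, where connectedness is more tractable). One must then check that the plugin conditions $(ii)$--$(iii)$ persist along the path; this is the content of the paper's unproved assertion that the space of plugins is connected.
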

In the parametric setting, \cref{def_pluggable} becomes:

 \begin{definition}
A family $ G_\cP   \in \Diff_c^\infty( \T \times   M  )_\cP$ 
 is {\em $\cP$-semi-pluggable} if there is a sequence $\left( g_{k\cP} \right)_{k \ge 1} $ of $\cP$-plugins  with step $2^{-k}$, so that for every large  $k$, 
the output of $g_{k\cP} $ is  $G_\cP$ and  $\widehat{ g_{k\cP} }\to \id_{\T \times M \times \cP} $ for the $C^\infty$-topology. 
The family of maps $ G_\cP $ is {\em $\cP$-semi-pluggable} if $ G_\cP $ and $ G_\cP^{-1} := (G^{-1}_p)_{p\in \cP}$ are $\cP$-semi-pluggable. Let: 
\begin{equation*} \P_\cP:= \{G_\cP \in   \Diff^\infty_c(\T\times M)_\cP: G_\cP  \text{ is $\cP$-pluggable}\} \; .\end{equation*} 
\end{definition}
To obtain \cref{mainprime} in the case $r=\infty$, it suffices to prove:
\begin{theoprime}\label{mainplug cP}We have $  \P_\cP= \Diff^\infty_c(\T\times M)_{\cP}$. 
\end{theoprime}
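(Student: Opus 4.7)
The plan is to reproduce, in the parametric setting, the argument leading to \cref{mainplug}, using the fact that every intermediate statement is equipped with a $\cP$-family counterpart. The starting observation is that via the embedding $g_\cP \mapsto \widehat{g_\cP}$ of \cref{morphism cP}, a family $g_\cP$ is a $\cP$-plugin precisely when $\widehat{g_\cP}$ is a plugin of $\T\times M\times \cP$ whose dynamics preserves every slice $\T\times M\times\{p\}$, and the output of $g_\cP$ is the unique family whose total map $\widehat{G_\cP}$ is the output of $\widehat{g_\cP}$. Thus every claim about $\P_\cP$ reduces to a claim about plugins of $\T\times M\times\cP$ that are fibered over $\cP$.

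First I would follow the scheme of \cref{group section} to show that $\P_\cP$ is a subgroup of $\Diff^\infty_c(\T\times M)_\cP$: the $\star$ product of compatible $\cP$-plugins is defined slicewise in $p$, and the parametric analogue of \cref{conca plug} gives multiplicativity of the output. Then I would invoke the parametric counterpart of \cref{c.closure} to see that $\P_\cP$ is closed in $\Diff^\infty_c(\T\times M)_\cP$ for the Whitney topology. Passing to infinitesimal generators I would define
\begin{equation*}
\spp_\cP := \{X_\cP \in \diff^\infty_c(\T\times M)_\cP : \Fl^t_{X_p} \in \P \text{ for every } t\in\R,\ p\in\cP\} ,
\end{equation*}
and, exactly as in \cref{p.sppclose}, check that $\spp_\cP$ is a closed Lie subalgebra of $\diff^\infty_c(\T\times M)_\cP$ containing the parametric counterparts $\sg_{1,\cP}$ and $\sg_{2,\cP}$ of $\sg_1$ and $\sg_2$.

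The heart of the proof is the parametric form of the two key phenomena. The adjunction stability (\cref{p.adjunctionstability}) passes to families because the identity $\Fl^t_{Y_p} = \Fl^s_{X_p}\circ \Fl^t_{e^{-s}Y_p}\circ \Fl^{-s}_{X_p}$ holds pointwise in $p$, so the slicewise plugin construction from \cref{Eigen} and the two $\star$ products that conclude it depend smoothly on $p$ once the ingredients do. The decomposition statement of \cref{p.pre_deceig} also parametrizes well: in the model case $M=\R$ with $X_p = 1$, $Y_p(y) = y$ and $Z_p(y) = \int_{-\infty}^y T_p(t)\,dt$, the field $Z_\cP$ is smooth in $p$ as soon as $T_\cP$ is; the extensions to $\R^n$ and then to a general $M$ via a $\cP$-independent partition of unity preserve this smoothness, because the partition functions do not depend on~$p$.

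Once these parametric counterparts are in place, the derivation of $\spp_\cP = \diff^\infty_c(\T\times M)_\cP$ follows \cref{subsec proof of main} verbatim: Fourier expansion in $\theta$ combined with closure of $\spp_\cP$ reduces the problem to fields $\phi(\theta)\cdot T_\cP$ with $T_\cP \in \sg_{2,\cP}$, which are handled by the two phenomena together with the identity $[\phi\cdot Y_{i,\cP}, X_{i,\cP}] = \phi\cdot Y_{i,\cP}$; a final Lie bracket with an element of $\sg_{1,\cP}$ covers the remaining $\theta$-component. Connectedness of $\Diff^\infty_c(\T\times M)_\cP$, inherited from that of $\Diff^\infty_c(\T\times M\times\cP)$ via the morphism $\widehat{\,\cdot\,}$, together with the closed subgroup property of $\P_\cP$, then forces $\P_\cP = \Diff^\infty_c(\T\times M)_\cP$. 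The main technical obstacle will be checking that each construction (plugin $\star$-product, perturbation inside an iterate, partition of unity) can be carried out in a manner genuinely smooth in $p$; this is plausible because all constructions are canonical, but it must be verified step by step, which is precisely what the parametric counterparts stated at the end of each subsection achieve.
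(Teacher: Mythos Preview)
Your outline is correct and follows the paper's approach: establish the parametric counterparts of each intermediate statement (group and closure of $\P_\cP$, closed Lie algebra $\spp_\cP$, adjunction stability, decomposition lemma), deduce $\spp_\cP=\diff^\infty_c(\T\times M)_\cP$, and conclude by approximating any $G_\cP$ by products of flows. Two small slips are worth flagging. First, your definition of $\spp_\cP$ asks only that $\Fl^t_{X_p}\in\P$ for each fixed $p$; the correct condition (and the one the paper uses) is that the \emph{family} $(\Fl^t_{X_p})_{p\in\cP}$ lies in $\P_\cP$, i.e.\ the plugins vary smoothly in $p$. Second, the final clause ``connectedness together with the closed subgroup property forces equality'' is not literally true (closed subgroups of connected groups need not be everything); what you actually use is that connectedness gives a smooth path $(G_{t,\cP})_t$, which is approximated by a product of time-$1/N$ flows of the fields $\partial_t G_{t,\cP}$, each in $\P_\cP$ by the equality $\spp_\cP=\diff^\infty_c(\T\times M)_\cP$, and then closure of $\P_\cP$ finishes. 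The paper in fact takes a mild shortcut here: it applies the already-proved non-parametric \cref{mainplug} to $\widehat{G_\cP}$ on $\T\times M\times\cP$ to obtain the approximation by flows with zero $\cP$-component, then invokes \cref{p.mainsppcP} and closure of $\P_\cP$.
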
 

\subsubsection{Examples of pluggable dynamics} \label{sec:Examples of pluggable dynamics}
Consider the following subgroup of $ \Diff_c^\infty ( \T \times M)$:

\[ \G_1:=  \{ (\theta, y) \in \T \times M    \mapsto  (\theta+ 
 \nu(y) , y ): \nu\in C^\infty_c(M, \R)\} \; . \]
This gives a first example of a subgroup of pluggable maps: 
\[  \G_1 \subset \spp \; . \]
\label{section:twist}
\begin{proposition} \label{twist}The group $ \G_1$ is included in $\P$.  
 \end{proposition}
 
 The subgroup $\G_1$ was first studied in \cite{BT22} into a set of generators of symplectomorphisms. 
 \begin{figure}[H]
  \begin{center}
 {\footnotesize \def\svgwidth{140pt}           %\'echelle du dessin
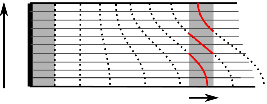}
 \end{center}
\caption{Dynamics of an element of $\G_1$.}
\end{figure}
 
 \begin{proof} 
Let $ \rho\in C^\infty(\T, \R^+)$  be a  function  with support contained in $ [ \tfrac 13,\tfrac 23 ] \subset \T  $ and integral $1$. 
For $\epsilon\in [-2^{-k-2}, 2^{-k-2}]$, we define the smooth vector field:  
\begin{equation}  X_\epsilon: \theta\in \T \mapsto  2^{-k}/(1 - \epsilon \cdot \rho(\theta)) \; . \end{equation}   
Let $\phi^t_\epsilon$ be the flow of $X_\epsilon$. The time taken to go all around the circle equals to:  
\begin{equation} \tau(\epsilon)= \int_\T \frac1{X_\epsilon(\theta)} d\theta=
 \int_\T 2^{ k} \cdot  (1-\epsilon \cdot \rho(\theta)) d\theta= 2^{ k}\cdot  (1-\epsilon)\; .\end{equation}  
  If we stop at time $2^k$, then the lacking or exceeding time for a complete lap is $2^k\epsilon\in [-\tfrac14 , \tfrac14 ]$. As near $0$ the vector field $X_\epsilon$ equal $2^{-k}$, this implies that the image of $0$ by $\phi^{2^k}_\epsilon$ is equal to $\epsilon$. So for every $ \nu\in C^\infty_c(M, \R)$, for every $k$ large,   the map:
\begin{equation} g_k:= (\theta, y)\mapsto (\phi^1_{2^{-k}\nu(y)}(\theta), y)\end{equation} 
coincides with $R_{2^{-k}}$ on  a set which contains the complement of $S=[\tfrac14, \tfrac34]\times   \supp \nu$. Thus  $g_k$  is a plugin with step $2^{-k}$.  Furthermore, by the above discussion, its output equals to:
\begin{equation} G:  (\theta, y)   \mapsto  \left(\theta+ 
 \nu(y) , y  \right) \text{ with } \nu\in C^\infty_c(M, \R)\; .\end{equation}   
Finally observe that $g_k$ is $C^\infty$-close to identity. 
Thus $G$ is semi-pluggable. Hence  $\G_1$ is formed by semi-pluggable maps, and as $\G_1$ is a group, it is formed by pluggable maps. 
\end{proof}

\paragraph{Parametric counterpart.}  Let $\G_{1\cP}$ be the subset of $\Diff_c^\infty(\T\times M)_\cP$ formed by families $(G_p)_{p\in \cP}$ such that $G_p\in \G_1$ for every $p\in \cP$. 

We have similarly:
\begin{proposition} \label{twist cP} The group $\G_{1\cP}$ is included in $\P_\cP$. \end{proposition}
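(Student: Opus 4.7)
Write an element $G_\cP=(G_p)_{p\in\cP}\in\G_{1\cP}$ as $G_p(\theta,y)=(\theta+\nu_p(y),y)$ for some family $\nu_\cP=(\nu_p)_{p\in\cP}$, which the definition of $\Diff_c^\infty(\T\times M)_\cP$ forces to depend smoothly on $(y,p)$ and to have $\bigcup_{p\in\cP}\supp\nu_p\times\{p\}$ compactly contained in the interior of $M\times\cP$. The plan is to apply the construction of the non-parametric \cref{twist} verbatim, letting $p$ ride along as a passive parameter: using the bump $\rho\in C^\infty(\T,\R^+)$ and vector field $X_\epsilon=2^{-k}/(1+\epsilon\,\rho(\theta))$ from that proof, set
\begin{equation*}
g_{k,p}(\theta,y) := \bigl(\phi^1_{2^{-k}\nu_p(y)}(\theta),\,y\bigr),
\end{equation*}
where $\phi^t_\epsilon$ is the flow of $X_\epsilon$. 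Since $\nu_p(y)$ is jointly smooth in $(y,p)$, the map $\widehat{g_{k,\cP}}:(\theta,y,p)\mapsto(g_{k,p}(\theta,y),p)$ is a $C^\infty$-diffeomorphism of $\T\times M\times\cP$, so $(g_{k,p})_{p\in\cP}$ is a legitimate $\cP$-family.

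\textbf{Checking the plugin conditions and the output.} First I would verify that $\widehat{g_{k,\cP}}$ is a plugin of step $2^{-k}$ on $\T\times M\times\cP$. On a neighborhood of $\theta=0$, the vector field $X_\epsilon$ is identically $2^{-k}$ for every $\epsilon\in[-2^{-k-2},2^{-k-2}]$, so $\widehat{g_{k,\cP}}$ agrees with translation by $2^{-k}$ on the first coordinate when restricted to $[0,2^{-k})\times M\times\cP$, giving condition $(i)$. For $(ii)$--$(iii)$, the support of $\widehat{g_{k,\cP}}\cdot R_{2^{-k}}^{-1}$ is contained in the fixed compact $[\tfrac14,\tfrac34]\times K$ where $K:=\bigcup_p\supp\nu_p\times\{p\}$, and outside this set $\widehat{g_{k,\cP}}$ is the pure translation $R_{2^{-k}}$; the fiberwise argument of \cref{twist} therefore yields a bounded return time and full orbit of $\Delta_{2^{-k}}\times\cP$, uniformly in $p$. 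The output computation carries over without change: the return time of $X_\epsilon$ around $\T$ is $2^k(1+\epsilon)$, so $\phi^{2^k}_\epsilon(0)=\epsilon$, which shows that the output of $g_{k,p}$ equals $G_p$. Hence the output of the $\cP$-plugin $(g_{k,p})_{p\in\cP}$ is exactly $G_\cP$.

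\textbf{Convergence and the inverse.} As $k\to\infty$, $2^{-k}\nu_p(y)\to 0$ jointly in $(y,p)$ together with all partial derivatives, and the supports of $\widehat{g_{k,\cP}}\cdot\id^{-1}$ remain in the fixed compact $[\tfrac14,\tfrac34]\times K$, so $\widehat{g_{k,\cP}}\to\id_{\T\times M\times\cP}$ in the Whitney $C^\infty$-topology of $\Diff_c^\infty(\T\times M\times\cP)$. This shows that $G_\cP$ is $\cP$-semi-pluggable. Since $G_p^{-1}(\theta,y)=(\theta-\nu_p(y),y)$ also lies in $\G_1$ with smooth dependence on $p$, the same construction applied to $-\nu_\cP$ shows that $G_\cP^{-1}$ is $\cP$-semi-pluggable as well, whence $G_\cP\in\P_\cP$. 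There is no real obstacle here: the whole proof is fiberwise, and the only point requiring care is to maintain a uniform compact support to guarantee Whitney convergence, which is ensured by the compactness of $K$.
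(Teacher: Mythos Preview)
Your proof is correct and follows the same approach as the paper's: apply the non-parametric construction of \cref{twist} fiberwise, using that $\nu_p(y)$ depends smoothly on $(y,p)$ so that the resulting family $(g_{k,p})_{p\in\cP}$ is a $\cP$-plugin with the required output. One minor slip: the plugins $\widehat{g_{k,\cP}}$ are not elements of $\Diff_c^\infty(\T\times M\times\cP)$ (outside the support of $\nu_\cP$ they equal the rotation $R_{2^{-k}}$, not the identity), so the relevant convergence is in the metric $C^\infty$-topology on $\Diff^\infty(\T\times M\times\cP)$, not the Whitney topology on $\Diff_c^\infty$; since $\T\times M\times\cP$ is compact this is harmless, but your claim about the support of $\widehat{g_{k,\cP}}\cdot\id^{-1}$ is literally false.
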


 \begin{proof}
An element $(G_p)_{p\in \cP}\in  \G_{1\cP}$ is formed by mapping of the form 
$G_p: (\theta, y)   \mapsto  (\theta+  \nu_p(y) , y )$ with $(\nu_p)_{p\in \cP}$ smooth. Thus   
the vector field $Z_p:(\theta,y)\mapsto (X_{2^{-k} \nu_p}(y), 0)$ depends smoothly on $p$.   Hence the time one  maps $g_p$ form a family of plugins $(g_p)_{p\in \cP}$  (with output $G_p$ and step $2^{-k}$) which depends smoothly on $p\in \cP$.   \end{proof}

\section{Topological group structure on $\P$}  \label{sec top group}
\subsection{Group structure on $\P$}\label{group section}
In this section we show that $\P$ endowed with the composition rule is a group. To this end,   let  $ \pi: \bar \T\times M:= \R/2\Z \times M \to \T\times M= \R/\Z \times M$  be the canonical 2-sheeted covering map  and denote by 
$\psi:  (\theta,y) \in  \bar\T\times M\mapsto  (\theta/2, y)\in\T\times M$
   the canonical diffeomorphism. 
    The following   defines a binary operation $\star$ on the space of plugins:
  \begin{definition}
 Let   $g_1 $ and $g_0 $ be two plugins with same step $\sigma=2^{-k}$.  Let ${\bar g_1, \bar g_0\in \Diff^\infty (\widehat\T\times M)}$ be the lifts of $g_1$ and $g_0$ such that 
 $\bar{g}_1(0,y)=\bar {g}_0(0,y)=(\sigma,y)$ for every $y\in  M$.
 Let $\overline{g_1\star g_0}$ be equal to  the lift  $\bar g_0$ on  the first half of $\bar\T\times M$  and be equal to the  lift of $\bar g_1$ on second half of $\bar\T\times M$:
\begin{equation*} \overline{g_1\star g_0}
: (\theta, y)\in \bar \T \times M \mapsto \left \{\begin{array}{cl} \bar g_0(\theta,y)& \text{if } 
 \theta\in  [0,1)+2\Z\; , \\ 
\bar g_1(\theta,y),& \text{if } 
 \theta\in [1,2)+2\Z\; ,  \end{array}\right. \text{ and put } g_1\star g_0:=   \psi \circ \overline{g_1\star g_0}\circ \psi^{-1}   \; .\end{equation*} 

\end{definition}
  
\begin{remark}\label{p.continuityatId}
Given a neighborhood $V$ of $\id$ in $\Diff^r_0(\T\times M)$, there exists a neighborhood $W$ of $\id$ such that for any pair of plugins $f, g\in W$ of same steps, we have $f\star g\in V$.
\end{remark}

\begin{figure}[H]\begin{center}
 {\footnotesize 
\begin{equation*} \raisebox{-2ex}{\def\svgwidth{30pt}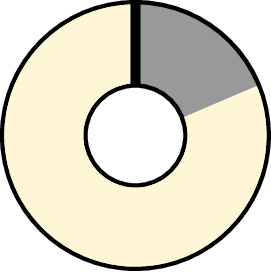}%\pause
  \; \xrightarrow{\begin{array}{c}\mbox{$2$-sheeted}\\\mbox{lift}\end{array}}\; \raisebox{-5ex}{\def\svgwidth{50pt}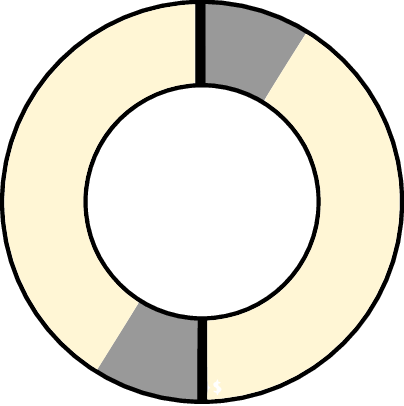}%\pause 
  \quad\xrightarrow{\begin{array}{c}\mbox{cut \&}\\\mbox{paste}\end{array}}\quad \raisebox{-5ex}{\def\svgwidth{50pt}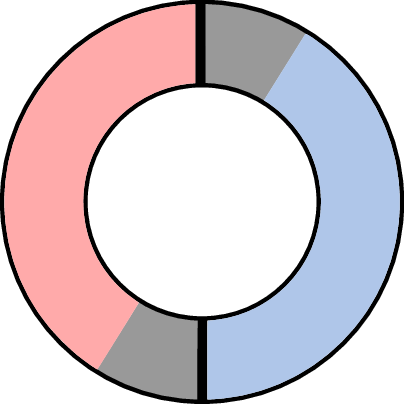}%\pause
  \quad\xrightarrow{\theta/2}\quad\raisebox{-2ex}{\def\svgwidth{30pt}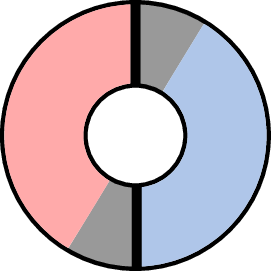}\end{equation*} }
  \end{center}
  \caption{Concatenation of two plugins.}\label{conca plug}
\end{figure}

The $\star$-product associates to a pair of plugins of the same step  a plugin of half that step and whose  output is the composition of the  outputs: 
\begin{proposition}\label{outputstar}
If $g_0$ and $g_1$  are plugins with a same step $\sigma$ and outputs $G_0$ and $G_1$ then 
$g_1\star g_0$ is a plugin with step $\sigma/2$ and  output  $G_1\circ G_0$.   
\end{proposition}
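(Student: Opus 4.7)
The plan is to work in the double cover $\bar\T\times M$ and trace orbits of $\overline{g_1\star g_0}$ explicitly. First I would verify that $\overline{g_1\star g_0}$ is smooth on $\bar\T\times M$. The potential issues are at the two gluing loci $\theta=1$ and $\theta=0\equiv 2$. Near $\theta=1$, the covering equivariance $\bar g_i(\theta+1,y)=\bar g_i(\theta,y)+(1,0)$ combined with $g_i|_{\Delta_\sigma}=R_\sigma$ forces both $\bar g_0$ and $\bar g_1$ to coincide with the translation by $\sigma$ on a neighborhood of $\{1\}\times M$, so the seam is clearly smooth there. At $\theta=0\equiv 2$, smoothness of each $g_i$ on $\T\times M$ pins down all one-sided derivatives of $\bar g_i$ at the origin from $R_\sigma$; transporting by the equivariance, the Taylor data of $\bar g_1$ at $\theta=2^-$ matches that of $\bar g_0$ at $\theta=0^+$, giving smoothness at the wrap-around seam.

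Second I would check condition (i) of the plugin definition with step $\sigma/2$ by direct computation: for $(\theta,y)\in\Delta_{\sigma/2}$, passing through $\psi^{-1}$ lands in $\Delta_\sigma$, then $\overline{g_1\star g_0}=\bar g_0=R_\sigma$ there, and $\psi$ yields $(\theta+\sigma/2,y)$. The crucial step is the first-return description. I claim that for $p\in\Delta_\sigma\subset\bar\T\times M$, the $\overline{g_1\star g_0}$-orbit returns to $\Delta_\sigma$ at time $\tau'(p)=\tau_0(p)+\tau_1(q)$ with $q:=g_0^{\tau_0(p)}(p)$, and the hitting point equals $g_1^{\tau_1(q)}(q)$ interpreted in the second sheet $[2,2+\sigma)\equiv [0,\sigma)$. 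The lift normalization $\bar g_i(0,y)=(\sigma,y)$ forces $\bar g_0^{\tau_0(p)}$ to send $\Delta_\sigma$ into the shifted copy $\Delta_\sigma+1\subset[1,1+\sigma)$, with all $\tau_0(p)-1$ intermediate iterates remaining in the first sheet $[0,1)$; hence $\overline{g_1\star g_0}$ coincides with $\bar g_0$ throughout this phase. Symmetrically, from $[1,1+\sigma)$ the next $\tau_1(q)$ iterates follow $\bar g_1$ (on $[1,2)$) and reach $[2,2+\sigma)\equiv [0,\sigma)$, giving the return to $\Delta_\sigma$.

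Third, I would compute the output. Substituting into $\tilde G=H_{\sigma/2}\circ(g_1\star g_0)^{\tau'}\circ H_{\sigma/2}^{-1}$ and unwinding the conjugation by $\psi$, using the obvious relation between $H_\sigma$, $H_{\sigma/2}$ and $\psi$, one obtains $\tilde G(\tilde\theta,\tilde y)=H_\sigma(g_1^{\tau_1(q)}(q))$ with $q=H_\sigma^{-1}(G_0(\tilde\theta,\tilde y))$. By the definition $G_1=H_\sigma\circ g_1^{\tau_1}\circ H_\sigma^{-1}$, the right-hand side is exactly $G_1(G_0(\tilde\theta,\tilde y))$. Conditions (ii) (bounded return time) and (iii) (orbit covers $\T\times M$) of plugin-hood then follow from the corresponding properties of $g_0$ and $g_1$ together with the above decomposition.

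The main obstacle is the ``one-lap'' claim inside Step 2: showing that the canonical lift $\bar g_0$ sends $\Delta_\sigma$ into $\Delta_\sigma+1$ under $\bar g_0^{\tau_0}$ rather than back into the same sheet, and that the intermediate iterates do not cross into $[1,2)$. This is transparent for the uniform rotation $g_0=R_\sigma$ (where $\tau_0=1/\sigma$ and $\bar g_0^{\tau_0}(0,y)=(1,y)$) and extends to nearby plugins by continuity of the winding number in $\R\times M$ and connectedness of $\Delta_\sigma$, which is exactly the regime relevant for the applications of this paper.
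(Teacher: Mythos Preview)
Your approach is essentially the paper's own: work in the double cover, trace the orbit through the two sheets $\bar\Delta_0$ and $\bar\Delta_1$, and read off the composed return map. The paper handles smoothness at the seams exactly as you do in your first paragraph (both lifts coincide with the translation by $\sigma$ on $\bar\Delta_0\cup\bar\Delta_1$), and simply asserts the orbit-tracing fact that you call the ``one-lap'' claim.

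There is, however, a genuine gap in your final paragraph. The proposition is stated for \emph{arbitrary} plugins $g_0,g_1$ of the same step, not only for plugins close to $R_\sigma$, so appealing to continuity of winding number from the rotation does not suffice. Fortunately the one-lap claim holds in full generality, and the argument is elementary. Pass to the universal cover and let $\tilde g_0:\R\times M\to\R\times M$ be the lift with $\tilde g_0(0,y)=(\sigma,y)$; it commutes with the shift $(\theta,y)\mapsto(\theta+1,y)$ and sends $[0,\sigma)\times M$ onto $[\sigma,2\sigma)\times M$. Since $\tilde g_0$ is a bijection commuting with the shift, it must send the connected complement $[\sigma,1)\times M$ into a single component $[m+2\sigma,m+1+\sigma)\times M$, and continuity at $\theta=\sigma$ forces $m=0$, i.e.\ $\tilde g_0\big([\sigma,1)\times M\big)=[2\sigma,1+\sigma)\times M$. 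An immediate induction then shows that for $p\in[0,\sigma)\times M$ the iterates $\tilde g_0^k(p)$ stay in $[\sigma,1)\times M$ for $1\le k<\tau_0(p)$ and land in $[1,1+\sigma)\times M$ at $k=\tau_0(p)$. This is exactly the one-lap claim, with no proximity hypothesis needed; the rest of your proof then goes through unchanged.
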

\begin{proof} %\emph{$g_2\star g_1$ is a plugin.}  
The set $\Delta=([0,\sigma)+\Z)\times M$ lifts into the union of the two sets:
\begin{equation*} \bar{\Delta}_0=([0,\sigma)+2\Z)\times M \qand
\bar{\Delta}_1=([1,1+\sigma)+2\Z)\times M.
\end{equation*} 
Let $\pi_i$ be the restriction of $\pi$ to each $\bar{\Delta}_i$.

Note that  $\bar g_0$ and $\bar g_1$ coincide with the  translation by $(\sigma  ,0)$ on $\bar{\Delta}_0\cup \bar{\Delta}_1$. Hence the map $\overline{g_0\star g_1}$ is a smooth diffeomorphism that also coincides  with the  translation by $(\sigma  ,0)$ on  $\bar{\Delta}_0\cup \bar{\Delta}_1$.
Then   iterations of $\overline{g_1\star g_0}$    send  $\bar{\Delta}_0$ onto $\bar{\Delta}_1$ by $\bar g_0^{\tau_0\circ \pi}$ and send  $\bar{\Delta}_1$ onto $\bar{\Delta}_0$ by $\bar g_1^{\tau_1\circ \pi}$. Thus we have:
\begin{equation*}  {\bar g_{ 0 } ^{\tau_0\circ \pi}}{\vert \bar \Delta_0} = \pi_1^{-1}\circ H_{\sigma}^{-1}\circ G_0\circ H_{\sigma}\circ \pi_0 \qand 
\bar g_{1 }^{\tau_1\circ \pi} \vert \bar \Delta_1= \pi_0^{-1}\circ H_{\sigma}^{-1}\circ G_1\circ H_{\sigma}\circ \pi_1 \; . \end{equation*}  

Therefore the return time of $\overline{g_1\star g_0}$ into $\bar{\Delta}_0$ is defined on $ \bar \Delta_0 $ and the first return map is: 
\begin{equation*} \pi_0^{-1}\circ H_{\sigma}^{-1}\circ G_1\circ G_0\circ H_{\sigma}\circ \pi_0.\end{equation*} 
This implies that $g_1\star g_0$ is a plugin with output $G_1\circ G_0$ and step $\sigma/2$.
\end{proof}

  \begin{proposition}\label{group}
  The set $\P$ is a  subgroup of $ \Diff^\infty_c(\T\times M)$ endowed with the composition rule.
  \end{proposition}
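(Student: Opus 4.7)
The plan is to verify the three subgroup axioms, leveraging the $\star$ operation and \cref{outputstar}. Closure under the identity is already recorded in \cref{exem id}, so only composition and inverses remain.

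For inverses, the argument is immediate from the definition: a map $G\in \P$ means that both $G$ and $G^{-1}$ are semi-pluggable, so by symmetry $G^{-1}\in \P$ as well. The only substantive content is therefore closure under composition.

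For composition, suppose $G_0,G_1\in \P$. By definition there exist sequences of plugins $(g_{0,k})_{k\ge 1}$ and $(g_{1,k})_{k\ge 1}$, each with step $2^{-k}$, whose outputs are respectively $G_0$ and $G_1$, and such that $g_{0,k}\to \id$ and $g_{1,k}\to\id$ in the $C^\infty$-topology as $k\to\infty$. First I would set
\[
h_k := g_{1,k}\star g_{0,k}\; ,
\]
which is well-defined since both factors have the same step $2^{-k}$. By \cref{outputstar}, $h_k$ is a plugin with step $2^{-(k+1)}$ whose output is precisely $G_1\circ G_0$. After reindexing $k\mapsto k+1$, the family $(h_k)_k$ fits the format required by \cref{def_pluggable}, provided one checks that $h_k\to\id$ in $C^\infty$. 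This convergence follows directly from \cref{p.continuityatId}: given any neighborhood $V$ of the identity, there is a neighborhood $W$ such that $g_{1,k}\star g_{0,k}\in V$ whenever both factors lie in $W$, and this eventually holds since both sequences converge to $\id$. Hence $G_1\circ G_0$ is semi-pluggable.

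To conclude $G_1\circ G_0\in \P$, the same reasoning applied to the semi-pluggable sequences for $G_1^{-1}$ and $G_0^{-1}$ (which exist because $G_0,G_1\in \P$) shows that $G_0^{-1}\circ G_1^{-1}=(G_1\circ G_0)^{-1}$ is semi-pluggable. Both $G_1\circ G_0$ and its inverse being semi-pluggable, we have $G_1\circ G_0\in \P$, which completes the verification. There is no real obstacle here: the group structure is essentially built into the construction of $\star$ together with the continuity remark, and the main conceptual input (that the output of $g_1\star g_0$ is the composition of outputs) has already been isolated as \cref{outputstar}.
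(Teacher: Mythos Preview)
Your proof is correct and follows essentially the same route as the paper: identity via \cref{exem id}, inversion directly from the symmetry of the definition, and closure under composition by forming $g_{1,k}\star g_{0,k}$ and invoking \cref{outputstar} for the output together with \cref{p.continuityatId} for convergence to $\id$. The paper phrases the composition step as ``semi-pluggability is stable by composition'' and then observes that pluggability follows, but the content and the cited ingredients are identical to yours.
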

\begin{proof}  
By  \cref{exem id}, $\P$ contains the identity and by definition it is stable by inversion. Thus it remains to show that semi-pluggability is stable by composition.  
Let $G_1, G_0$ be semi-pluggable. Then for every  $k$ large,   $C^\infty$-close to identity, there are plugins $g_{1 }$ and $g_{0 }$  with step $ 2^{-k}$ and with outputs $G_1$ and $G_0$ respectively. Then by \cref{p.continuityatId} , the map    $g_{1 }\star g_0$ is close to identity when $k$ is large. By \cref{outputstar}, the output of $g_{1 }\star g_0$ is $G_1\circ G_0$ and its step is $2^{-k-1}$. Thus $G_1\circ G_0$ is semi-pluggable. The second assertion is proved similarly.
 \end{proof}  
 \paragraph{Parametric counterpart.}
The proofs of the two latter propositions imply immediately:
\begin{proposition}
If $f_\cP ,  g_\cP$ are two $ \cP$-plugins with same step $\sigma$ and output $ F_\cP $ and $  G_\cP $, then $(f_p\star g_p)_{p\in \cP}$ is a $\cP$-plugin with step $ \sigma/2$ and output $ (F_p \circ  G_p) _{ p \in \cP} $. 
\end{proposition}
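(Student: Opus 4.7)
The plan is to reduce the parametric statement to the two preceding non-parametric propositions by passing to the total space $\T \times M \times \cP$, exploiting the fact that the $\star$ operation is defined by a cut-and-paste on the $\T$-factor and is therefore automatically compatible with any additional product factor.

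First I would apply the non-parametric Proposition (the one identifying the output of $g_1 \star g_0$) fiberwise. By definition of a $\cP$-plugin, both $\widehat{f_\cP}$ and $\widehat{g_\cP}$ are plugins of $\T \times M \times \cP$ with common step $\sigma$, and by \cref{def.outputcP} their outputs are $\widehat{F_\cP}$ and $\widehat{G_\cP}$ respectively. Applying the non-parametric proposition to this pair of plugins on the manifold $\T \times (M \times \cP)$ immediately yields that $\widehat{f_\cP} \star \widehat{g_\cP}$ is a plugin of step $\sigma/2$ with output $\widehat{F_\cP} \circ \widehat{G_\cP}$, and this latter composition equals $\widehat{(F_p \circ G_p)_{p \in \cP}}$ because composition on the total space is fiberwise.

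The only point requiring a short verification is that the $\star$ operation commutes with the hat operation, that is,
\begin{equation*}
\widehat{f_\cP} \star \widehat{g_\cP} \;=\; \widehat{(f_p \star g_p)_{p \in \cP}}\; .
\end{equation*}
This holds because $\widehat{f_\cP}$ and $\widehat{g_\cP}$ preserve the fibers $\T \times M \times \{p\}$, so their canonical lifts to $\bar\T \times M \times \cP$ are precisely the fiberwise lifts $(\bar f_p)_{p \in \cP}$ and $(\bar g_p)_{p \in \cP}$. The cut-and-paste definition of $\star$ is carried out along the $\T$-direction only (using the halves $\theta \in [0,1)+2\Z$ and $\theta \in [1,2)+2\Z$), so it commutes with the projection to $\cP$. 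Conjugating by $\psi \times \id_\cP$ gives the claimed identity.

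Putting the two steps together, $\widehat{(f_p \star g_p)_{p \in \cP}}$ is a plugin with step $\sigma/2$ and output $\widehat{(F_p \circ G_p)_{p \in \cP}}$, which by definition means that $(f_p \star g_p)_{p \in \cP}$ is a $\cP$-plugin with those same step and output. No genuine obstacle arises: the proof is really a bookkeeping exercise checking that the fibered product structure is preserved at every stage of the $\star$ construction.
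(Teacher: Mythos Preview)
Your proof is correct and follows essentially the same route as the paper, which simply asserts that the result is immediate from the proofs of the two preceding non-parametric propositions. You have made explicit the one point the paper leaves tacit, namely that the $\star$ operation commutes with the hat operation $\widehat{\,\cdot\,}$, so that the non-parametric \cref{outputstar} applied on $\T\times(M\times\cP)$ yields exactly the parametric conclusion.
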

Thus we deduce: 
\begin{proposition}\label{group cP}The set   $\P_\cP$  is a  subgroup of $ \Diff^\infty_c(\T\times M)_\cP$ endowed with operation:
\begin{equation*} (F_p)_{p\in \cP}\circ (G_p)_{p\in \cP}=
(F_p \circ  G_p)_{p\in \cP}\; .\end{equation*}   
  \end{proposition}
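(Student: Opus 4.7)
The statement is the parametric analogue of Proposition~\ref{group}, and the plan is simply to transpose that proof to families, leveraging the intermediate parametric $\star$-proposition stated just above.

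First I would verify the three group axioms for $\P_\cP$. Containment of the neutral element: the constant family $(\theta,y,p)\mapsto (\theta+2^{-k},y)$ provides, via \cref{exem id0} applied fiberwise, a sequence of $\cP$-plugins of step $2^{-k}$ with output the identity family, whose hat lifts converge to $\id_{\T\times M\times\cP}$ in $C^\infty$; hence $(\id)_{p\in \cP}\in \P_\cP$. Stability by inversion is built into the definition of $\cP$-pluggable.

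Next I would handle the main content, closure under composition. Let $F_\cP,G_\cP\in \P_\cP$. By definition, for all large $k$ there exist $\cP$-plugins $f_{k\cP}$ and $g_{k\cP}$ of step $2^{-k}$, with outputs $F_\cP$ and $G_\cP$ respectively, such that both $\widehat{f_{k\cP}}$ and $\widehat{g_{k\cP}}$ tend to $\id_{\T\times M\times \cP}$ in $\Diff^\infty(\T\times M\times \cP)$. Define the pointwise $\star$-product
\begin{equation*}
(f_{k\cP}\star g_{k\cP}):=(f_{k,p}\star g_{k,p})_{p\in \cP}.
\end{equation*}
Since the $\star$ operation affects only the $\T$-coordinate, it commutes with the product structure, so
\begin{equation*}
\widehat{f_{k\cP}\star g_{k\cP}}=\widehat{f_{k\cP}}\star \widehat{g_{k\cP}}
\end{equation*}
as plugins of $\T\times(M\times \cP)$. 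Applying the (already stated) parametric version of \cref{outputstar}, this is a $\cP$-plugin of step $2^{-k-1}$ whose output is $(F_p\circ G_p)_{p\in \cP}$. By \cref{p.continuityatId} applied on the manifold $M\times \cP$, the hat lifts $\widehat{f_{k\cP}}\star \widehat{g_{k\cP}}$ tend to the identity of $\T\times M\times \cP$ in $C^\infty$ as $k\to\infty$. Hence $F_\cP\circ G_\cP$ is $\cP$-semi-pluggable.

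To conclude that $F_\cP\circ G_\cP$ is $\cP$-pluggable, I would apply the same argument to $(F_\cP\circ G_\cP)^{-1}=G_\cP^{-1}\circ F_\cP^{-1}$: since $F_\cP,G_\cP\in \P_\cP$, their inverses are $\cP$-semi-pluggable, so the preceding paragraph applied to the pair $(G_\cP^{-1},F_\cP^{-1})$ produces the required approximating sequence of $\cP$-plugins with output $G_\cP^{-1}\circ F_\cP^{-1}$. There is no real obstacle here: the only points to be careful about are that the $\star$ construction is genuinely pointwise in $p$ (so the smooth dependence on $p$ is preserved), and that the convergence $\widehat{f_{k\cP}}\star \widehat{g_{k\cP}}\to \id$ is guaranteed by \cref{p.continuityatId} on the extended base $M\times \cP$, not merely fiberwise.
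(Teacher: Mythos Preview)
Your proof is correct and follows exactly the approach the paper intends: the paper's proof is simply the sentence ``Thus we deduce:'' following the parametric $\star$-product proposition, and you have filled in precisely the details one would expect, transposing the proof of \cref{group} fiberwise and using the compatibility $\widehat{f_{k\cP}\star g_{k\cP}}=\widehat{f_{k\cP}}\star \widehat{g_{k\cP}}$ (which the paper itself invokes later, in the proof of \cref{c.closure cP}).
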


 \subsection{Another subgroup included in $\P$}
 \label{section:Dil}
Consider the  following sub-group  of $\Diff^\infty_{ c}(\T\times M)$:
\begin{equation*} \G_2:= \left\{ (\theta, y)\in \T\times M   \mapsto (\theta,F(y)): 
F \in \Diff_c^\infty(  M)\right\}\end{equation*} 
\begin{proposition}\label{Dil}  The  group $\G_2$ is included in $\P$.  
\end{proposition}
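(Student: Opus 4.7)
The plan is to build, for each large $k$, a plugin $g_k$ of step $\sigma = 2^{-k}$ whose output is $G\colon(\theta,y)\mapsto(\theta,F(y))$ and such that $g_k\to\id$ in $C^\infty$. Applying the same construction to $F^{-1}\in\Diff^\infty_c(M)$ will show that $G^{-1}$ is also semi-pluggable, hence $G\in\P$.

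Since $F\in\Diff^\infty_c(M)$ is isotopic to $\id$ through a compactly supported isotopy, I can pick a smooth family $(F_\theta)_{\theta\in\R}$ in $\Diff^\infty_c(M)$, all supported in a common compact subset of $M\setminus\partial M$, satisfying $F_\theta=\id$ for $\theta\le 1/3$ and $F_\theta=F$ for $\theta\ge 2/3$. For $\sigma\in(0,1/3)$ set
\[
\phi_\theta\,:=\,F_\theta\circ F_{\theta-\sigma}^{-1}.
\]
Then $\phi_\theta=\id$ outside $[1/3,\,2/3+\sigma]\subset(0,1)$, so $(\phi_\theta)_\theta$ descends to a smooth $\T$-family equal to the identity on a neighborhood of $0\in\T$. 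Define the candidate plugin
\[
g_k(\theta,y)\,:=\,(\theta+\sigma,\,\phi_\theta(y)).
\]
This is a diffeomorphism of $\T\times M$; since $\phi_\theta=\id$ on $[0,\sigma)\subset[0,1/3]$, it coincides with $R_\sigma$ on $\Delta_\sigma$, satisfying condition $(i)$ of the plugin definition.

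The crux is that the return time is identically $N=2^k$ and that the return map telescopes. Iterating gives
\[
g_k^j(\sigma t,y)=\bigl(\sigma(t+j)\bmod 1,\,\phi_{\sigma(t+j-1)}\circ\cdots\circ\phi_{\sigma t}(y)\bigr)\qquad (t\in[0,1)),
\]
whose first coordinate re-enters $[0,\sigma)$ exactly at $j=N$. Using $\phi_{\sigma t}=\id$, the composition in the second coordinate telescopes:
\[
\phi_{\sigma(t+N-1)}\circ\cdots\circ\phi_{\sigma(t+1)}\;=\;F_{\sigma(t+N-1)}\circ F_{\sigma t}^{-1}\;=\;F,
\]
where the last equality uses $\sigma t\in[0,1/3]$, so $F_{\sigma t}=\id$, and $\sigma(t+N-1)=\sigma t+1-\sigma\in[2/3,1)$, so $F_{\sigma(t+N-1)}=F$. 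Rescaling by $H_\sigma$ identifies the first return map $g_k^N|_{\Delta_\sigma}$ with $G$, which yields both $(ii)$ and $(iii)$ and pins down the output as $G$.

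Finally $g_k\to\id$ in $C^\infty$ as $k\to\infty$: the smoothness of $\theta\mapsto F_\theta$ and its uniform compact support in $M$ imply that $\phi_\theta=F_\theta\circ F_{\theta-\sigma}^{-1}$ tends to $\id_M$ in $C^\infty(M)$, and similarly for all iterated $\theta$-derivatives, uniformly in $\theta$, as $\sigma\to 0$. Hence $g_k\to\id_{\T\times M}$ in $C^\infty$, so $G$ is semi-pluggable, and the same construction with $F^{-1}$ gives semi-pluggability of $G^{-1}$. The only non-trivial step is ensuring that the return time is \emph{exactly} $N$ on all of $\Delta_\sigma$; this is what forces the first coordinate to be shifted by the \emph{constant} $\sigma$, which in turn is what allows the telescoping product above to evaluate to the fixed diffeomorphism $F$ independently of the starting $t$.
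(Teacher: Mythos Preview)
Your proof is correct and rests on the same core idea as the paper: spread a compactly supported isotopy $(F_\theta)_\theta$ from $\id$ to $F$ around the circle so that one step of the plugin advances the isotopy by $\sigma$, and after $N=2^k$ steps the composition collapses to $F$. The paper implements this infinitesimally, writing $g_k$ as the time-one map of the vector field $X_k(\theta,y)=(2^{-k},\,2^{-k}\partial_\theta\tau(\theta)\,Y(\tau(\theta),y))$ where $Y$ is the generating non-autonomous field of the isotopy; integrating that field gives exactly a map of the form $(\theta,y)\mapsto(\theta+\sigma,\,F_{\tau(\theta+\sigma)}\circ F_{\tau(\theta)}^{-1}(y))$, which is your $\phi_\theta$ up to the reparametrisation $\tau$. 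The paper also first reduces to $F$ in a neighbourhood of $\id$ (so that the explicit exponential isotopy $F_t(y)=\exp_y(t\exp_y^{-1}F(y))$ is available) and then uses connectedness of $\G_2$ together with the group property of $\P$ to propagate. Your version bypasses both detours: you take any isotopy in $\Diff^\infty_c(M)$, write the plugin down by an explicit discrete formula, and get the telescoping and the $C^\infty$-convergence to $\id$ directly from smoothness of $(\theta,y)\mapsto F_\theta(y)$. This is slightly more elementary and handles all of $\G_2$ at once, while the paper's vector-field packaging meshes more naturally with the Lie-algebraic machinery used later.
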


 \begin{figure}[H]
  \begin{center}
 {\footnotesize \def\svgwidth{110pt}           %\'echelle du dessin
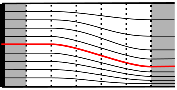}
 \end{center}
 \vskip -5mm
\caption{Dynamics of a plugin with output in $\G_2$.}
\end{figure}

\begin{proof}
 As $\G_2$ is connected  and $\P$ is a group by \cref{group}, it suffices  to show that a neighborhood $W$ of $\id$ in  $\G_2$  is included in  $\P$.  Indeed
by \cite[Prop 3.18]{willmore1984foundations}, any such neighborhood $W$ generates $\G_2$.
Up to replacing $W$ by $W\cap W^{-1}$, it suffices to show that any element of $W$ is semi-pluggable.

In order to do so,  we develop an  idea which appears in \cite{NRT78}.  Take $W$ sufficiently small so that for every $G:(\theta,y)\mapsto (\theta, F(y))\in W$, the map $F$ is sufficiently close to $\id$ to be isotopic to it via a smooth path. In other words, there exists a $C^\infty$-family  $(F_t)_{t\in [0,1]}$ of maps $F_t\in \Diff^\infty_c(M)$ such that $F_0=\id$ and $F_1=F$.
Such a family can be obtained using the exponential map $\exp$ of the Riemannian metric,  via the formula $F_t:= y\mapsto  \exp_y(t \cdot \exp_y^{-1} F(y))$. Define:
\begin{equation} Y(t,y):=  \partial_t F_t\circ F_t^{-1}(y)\; , \end{equation} 
and observe that $F$   is the time one map of the (compactly supported)  non-autonomous   vector field $Y $.
 Let $\tau: \T\to [0,1]$ be a map which is smooth on $\T\setminus \{0\}$ and such that near $0^+$ it equals $0$ and near $0^-$ is equal $1$.
 For  $k$ large, let:
\begin{equation} X_k: (\theta ,y) \in\T\times M \mapsto ( 2^{-k}, 2^{-k} \cdot\partial_\theta \tau(\theta)\cdot  Y(\tau(\theta), y))\; .\end{equation} 

Let $g_k$ be the time one map of this vector field.
Observe that for $k$ large enough,  $g_k$ is a plugin with step $\sigma=2^{-k}$ and return time $\sigma^{-1}$. Furthermore, its output is $G$. Indeed the second coordinate of the output is the time  $ \sigma^{-1} $ map  of the flow of $\partial_\theta \tau(\theta)\cdot  Y(\tau(\theta), y)$ which is the time one map $F$ of the flow of $Y$. Thus we have:
\begin{equation}\label{preplugin} g^{1/\sigma}_k\circ H_\sigma (x)= H_\sigma \circ   G(x)   , \quad \forall x \in \Delta_\sigma \; .\end{equation}
Furthermore, when $k$ is large, the plugin $g_k$ is close to identity. Thus $G$ is semi-pluggable. 
\end{proof}

\paragraph{Parametric counterpart.} Let  $\G_{2\cP}$ be the subset of $\Diff_c^\infty(\T\times M)_\cP$ formed by families $(G_p)_{p\in \cP}$ such that $G_p\in \G_2$ for every $p\in \cP$. The following is a counterpart of \cref{Dil}:
 \begin{proposition}\label{Dil cP}
 The group $ \G_{2\cP}$  is included in $\P_\cP$. 
 \end{proposition}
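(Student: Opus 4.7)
The plan is to mirror the proof of \cref{Dil}, making every step depend smoothly on the parameter $p\in\cP$. Since $\P_\cP$ is a group by \cref{group cP} and $\G_{2\cP}$ is path-connected (any $(G_p)_{p\in\cP}\in\G_{2\cP}$ with $G_p=(\theta,y)\mapsto(\theta,F_p(y))$ admits a smooth parametric isotopy to the identity family by the very definition of $\Diff_c^\infty(M)_\cP$, which can be telescoped into finitely many increments uniformly $C^\infty$-close to the identity thanks to compactness of $\cP\times[0,1]$), it suffices to show that any $(G_p)_{p\in\cP}\in\G_{2\cP}$ sufficiently close to the identity family is $\cP$-semi-pluggable.

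For such a family, write $G_p:(\theta,y)\mapsto(\theta,F_p(y))$ and build the jointly smooth isotopy $F_{p,t}(y):=\exp_y\bigl(t\cdot \exp_y^{-1}F_p(y)\bigr)$, which is well defined and smooth in $(p,t,y)$ once $F_\cP$ is close enough to the identity family. Then I would define the non-autonomous vector field $Y_p(t,y):=\partial_t F_{p,t}\circ F_{p,t}^{-1}(y)$, which is jointly smooth in $(p,t,y)$, compactly supported in $[0,1]\times M$ uniformly in $p$ by compactness of $\cP$, and whose time-one flow is $F_p$.

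With the same cutoff function $\tau:\T\to[0,1]$ as in the proof of \cref{Dil}, define
\begin{equation*}
X_{k,p}:(\theta,y)\in\T\times M\mapsto\bigl(2^{-k},\;2^{-k}\cdot\partial_\theta\tau(\theta)\cdot Y_p(\tau(\theta),y)\bigr),
\end{equation*}
and let $g_{k,p}$ be its time-one flow. The verbatim fiber-wise computation of \cref{Dil} shows that for $k$ large each $g_{k,p}$ is a plugin of step $2^{-k}$ with output $G_p$, and joint smoothness in $p$ makes $(g_{k,p})_{p\in\cP}$ a $\cP$-plugin with output $(G_p)_{p\in\cP}$. Since $X_{k,p}$ and all its derivatives in $(p,\theta,y)$ are $O(2^{-k})$ uniformly in $p$, the augmented map $\widehat{g_{k,\cP}}$ tends to $\id_{\T\times M\times\cP}$ in the $C^\infty$-topology, yielding $\cP$-semi-pluggability of $(G_p)_{p\in\cP}$. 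Applying the same argument to $(G_p^{-1})_{p\in\cP}\in\G_{2\cP}$ (which is again in $\G_{2\cP}$ since the latter is a group) gives full $\cP$-pluggability.

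The main (but routine) obstacle is the joint smoothness bookkeeping: one has to verify that the exponential-map isotopy, its non-autonomous vector field $Y_p$, and the flow of $X_{k,p}$ all depend smoothly on $p$ with uniform $C^\infty$-control across $p\in\cP$, so that the convergence $\widehat{g_{k,\cP}}\to\id$ actually holds in the parametric $C^\infty$-topology. This follows from standard smooth dependence of ODE flows on parameters, combined with the compactness of $\cP$.
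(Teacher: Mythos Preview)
Your proposal is correct and follows essentially the same approach as the paper: both reduce to families in a neighborhood of the identity (the paper via the same neighborhood $W$ as in \cref{Dil}, you via path-connectedness and telescoping), then build the identical exponential-map isotopy $F_{p,t}$, the non-autonomous field $Y_p$, and the vector field $X_{k,p}$ whose time-one map yields the desired $\cP$-plugin. The only cosmetic difference is that the paper handles the inverse implicitly via the symmetric neighborhood $W\cap W^{-1}$ (as in the proof of \cref{Dil}), whereas you argue it separately.
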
 
 
 \begin{proof} 
Let $W$ be as defined in the proof of \cref{Dil}. We define $W_\cP$ as  the subset of $\Diff_c^\infty(\T\times M)_\cP$ formed by families $(G_p)_{p\in \cP}$ such that $G_p\in W$ for every $p\in \cP$. For the same reasons it suffices to show that any element of $W_\cP$ is $ \cP$-semi-pluggable. 
Similarly, for any family $( G_p)_{p \in \cP} =: (id_\T\times F_p)_{p\in \cP} \in W_{\cP} $
, the family $( F_p)_{ p \in \cP} $ is isotopic to the identity via a smooth path $ \left(  ( F_{pt})_{ p \in \cP}  \right)_{ t \in [ 0,1]}$  where $F_{pt} := y\mapsto  \exp_y(t \cdot \exp_y^{-1} F_p(y))$. We define:
\begin{equation} Y_p (t,y):=  \partial_t F_{pt}\circ F_{pt}^{-1}(y)\; .  \end{equation} 
Note that the family of vector fields $(Y_p)_{p \in \cP} $ is smooth. Define then the family of vector fields:
 \begin{equation} X_{kp} : (\theta ,y) \in\T\times M \mapsto ( 2^{-k}, 2^{-k} \cdot\partial_\theta \tau(\theta)\cdot  Y_p(\tau(\theta), y))  \; , \end{equation} 
 where $ \tau $ is the function  defined in \cref{Dil}.
For $k$ large enough, the family of time one maps $(g_{kp})_{p \in \cP} $ is a $\cP$-plugin with output the family $(G_p)_{p \in \cP}$. Moreover, when $k\to \infty$, the $\cP$-plugin $(g_{kp})_{p \in \cP}$ tends to the identity $ \id \in \Diff^\infty_c(M)_\cP$.
\end{proof}

\subsection{Closedness of the group $\P$} \label{subsec close}
In this section we prove that $\P$ is closed in $ \Diff^\infty_c(\T\times M)$.  

\begin{proposition}\label{c.closure}
The subgroup  $\P\subset \Diff^\infty_c(\T\times M)$ is closed. 
\end{proposition}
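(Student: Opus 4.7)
Let $(G_n)_{n\ge 1}$ be a sequence in $\P$ converging to $G$ in $\Diff^\infty_c(\T\times M)$. Since $\P$ is a group by \cref{group} and inversion is continuous, $G_n^{-1}\in \P$ and $G_n^{-1}\to G^{-1}$; it therefore suffices to show that $G$ is semi-pluggable, the argument for $G^{-1}$ being identical. The goal is thus to produce, for each large $k$, a plugin $\tilde g_k$ of step $\sigma=2^{-k}$ whose output is exactly $G$ and with $\tilde g_k\to \id$ in the $C^\infty$-topology.

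Write $G=H_n\circ G_n$ with $H_n:=G\circ G_n^{-1}$, which tends to $\id$ in every $C^r$-norm. Fix $k$ and set $\sigma=2^{-k}$. By semi-pluggability of $G_n$, pick a plugin $g_n$ for $G_n$ of step $\sigma$ that is very close to $\id$ in $C^\infty$. Recall that the first return map then satisfies $g_n^\tau|_{\Delta_\sigma}=H_\sigma^{-1}\circ G_n\circ H_\sigma$, so that the condition on the output we want,
\[
\tilde g_n^\tau|_{\Delta_\sigma}=H_\sigma^{-1}\circ G\circ H_\sigma=\bigl(H_\sigma^{-1}\circ H_n\circ H_\sigma\bigr)\circ g_n^\tau|_{\Delta_\sigma},
\]
is equivalent to post-composing the first return map by the rescaled correction $\phi_n:=H_\sigma^{-1}\circ H_n\circ H_\sigma$, which is a compactly supported diffeomorphism of $\Delta_\sigma$.

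The modification is carried out inside a well-chosen iterate $A_{j_0}:=g_n^{j_0}(\Delta_\sigma)$ of the renormalization domain, with $0<j_0<\tau$: one replaces $g_n$ on $A_{j_0}$ by its composition with a small diffeomorphism whose effect, after being transported by the remaining iterates $g_n^{\tau-j_0-1}$, is precisely $\phi_n$. The phrase from the sketch---``enlarge an iterate of the renormalization domain''---refers to the necessity that $A_{j_0}$ be chosen (by appropriately designing $g_n$, typically via a $\star$-product with a plugin of $\G_1$ or $\G_2$) large enough for the rescaled support of $H_n$ to fit comfortably inside, so that the surgery extends smoothly by the identity to all of $\T\times M$. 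Once the modification is defined, a direct computation as in the proof of \cref{outputstar} shows that $\tilde g_n$ remains a plugin of step $\sigma$ and that its output is $H_n\circ G_n=G$.

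The main technical obstacle is to keep $\tilde g_n$ close to $\id$ in $C^\infty$: the conjugation by $H_\sigma$ magnifies $C^r$-norms of $H_n-\id$ by a factor of order $\sigma^{-r}$, so naively the $C^r$-size of $\phi_n-\id$ is of order $\sigma^{-r}\|H_n-\id\|_{C^r}$. This is absorbed by choosing $n=n(k)$ large as a function of $k$: since $H_n\to \id$ in every $C^r$, we may require $\|H_n-\id\|_{C^r}\le \sigma^{r}/k$ simultaneously for all $r\le k$, which forces $\|\phi_n-\id\|_{C^r}\le 1/k$ for $r\le k$. Combining with $\|g_n-\id\|_{C^r}\to 0$, obtained from semi-pluggability of $G_n$ (along a subsequence in $n$ if necessary), yields $\|\tilde g_k-\id\|_{C^r}\to 0$ for every $r$, i.e.\ $\tilde g_k\to \id$ in the $C^\infty$-topology. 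This proves that $G$ is semi-pluggable, completing the proof of \cref{c.closure}.
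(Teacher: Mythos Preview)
Your overall strategy---reduce to showing semi-pluggability is closed, write $G=H_n\circ G_n$ with $H_n\to\id$, and correct a plugin for $G_n$ into one for $G$---matches the paper's. But the paper executes the correction differently, and your surgery argument has a genuine gap.

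The paper does \emph{not} perform surgery inside an iterate of the given plugin $g_n$. Instead it proves a standalone \cref{l.neigh.of.Id}: there is a \emph{fixed} neighborhood $\cN_c$ of $\id$ (independent of $k$) such that every $G'\in\cN_c$ is the output of a plugin of any step $2^{-k}$, $k\ge N$, lying in a prescribed neighborhood $\cN$ of $\id$. Then one fixes $j$ once so that $G_j^{-1}\circ G\in\cN_c$, takes a plugin $g_1$ for this correction and a plugin $g_0$ for $G_j$ (both of step $2^{-k}$, both in $\cN$), and sets $g=g_1\star g_0$. The $\star$-product cleanly composes outputs, so no in-place modification is needed. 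The proof of \cref{l.neigh.of.Id} is where the real work is: a ``zooming'' plugin (\cref{s.zoomin}) stretches one iterate of $\Delta_{2^{-k}}$ to a domain of \emph{fixed} width $\delta$, and a fragmentation step splits any close-to-identity $G'$ into two pieces, each equal to $\id$ near $\{0\}\times M$ or near $\{\tfrac12\}\times M$, so that the rescaled correction is supported in the interior of the wide domain and the modified map is globally smooth.

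Your surgery runs into exactly the obstacle that fragmentation is designed to remove. The modification $\chi$ on $A_{j_0}$ must equal $\id$ near the two ``vertical'' boundaries of $A_{j_0}$ for $\tilde g_n$ to be a smooth diffeomorphism of $\T\times M$. But the support of $H_n$ in the $\theta$-coordinate is uncontrolled (compact support in $\Diff^\infty_c(\T\times M)$ only pushes away from $\T\times\partial M$), so $\phi_n=H_\sigma^{-1}\circ H_n\circ H_\sigma$ can be nontrivial all the way up to $\{0,\sigma\}\times M$, and then $\chi$ is nontrivial at $\partial A_{j_0}$. Enlarging $A_{j_0}$ does not help: it addresses the \emph{size} blowup $\sigma^{-r}$, not the \emph{location} of the support. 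Without the fragmentation step your $\tilde g_n$ is in general only piecewise smooth.

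There is a second, more structural issue with your $n=n(k)$ trick. You need, for \emph{every} large $k$, a plugin of $G_{n(k)}$ of step $2^{-k}$ close to $\id$. Semi-pluggability of $G_n$ only guarantees such plugins for $k$ above some threshold depending on $n$; since your bound $\|H_n-\id\|_{C^r}\le 2^{-kr}/k$ forces $n(k)\to\infty$, the two thresholds may be incompatible. The paper sidesteps this entirely: because $\cN_c$ in \cref{l.neigh.of.Id} is fixed, $j$ is chosen once and the same $j$ works for all large $k$.
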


This proposition uses the following lemma proved below:
\begin{lemma}\label{l.neigh.of.Id} 
For any $ 1 \le r \le \infty  $,
for any neighborhood $\cN$ of  $\id\in \Diff^r (\T\times M)$, there exist $N\ge 1$ and a neighborhood $\cN_c$ of $\id\in \Diff_c^r (\T\times M)$ such that for 
 all  $G\in \cN_c$ and   $k\ge N$, there is a $C^r$-plugin $g\in \cN$ with output $G$ and step~$2^{-k}$. 
\end{lemma}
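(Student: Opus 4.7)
The plan is to construct the plugin $g$ explicitly as the time-$1$ flow of an autonomous vector field $X_k$ on $\T\times M$, generalizing the construction in Proposition~\ref{Dil}. Shrinking $\cN_c$ as needed, the local invertibility of the exponential map on $\Diff_c^r(\T\times M)$ lets us write every $G\in\cN_c$ as the time-$1$ map of a $C^r$-small, compactly supported non-autonomous vector field $Y(t,\theta,y)=(Y^\theta,Y^M)$ arising from an isotopy from $\id$ to $G$. Fix once and for all a smooth ``clock'' function $\tau\colon\T\to[0,1]$ with $\tau\equiv0$ on $[0,1/4)$ and $\tau\equiv1$ on $(3/4,1)$, so that $\partial_\theta\tau$ is smooth on $\T$ and vanishes on $\Delta_\sigma$ whenever $\sigma=2^{-k}<1/4$.

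Set
\begin{equation*}
X_k(\theta,y)=\bigl(\sigma+\sigma\,\partial_\theta\tau(\theta)\,Y^\theta(\tau(\theta),\theta,y),\ \sigma\,\partial_\theta\tau(\theta)\,Y^M(\tau(\theta),\theta,y)\bigr),
\end{equation*}
and define $g:=\Fl_{X_k}^1$. Because $\partial_\theta\tau\equiv0$ on $\Delta_\sigma$, the field $X_k$ equals $(\sigma,0)$ there and hence $g|_{\Delta_\sigma}=R_\sigma$, which is property~$(i)$ of Definition~\ref{def.plugin}. The remaining properties $(ii)$ and $(iii)$ follow from the fact that $g$ is a small perturbation of $R_\sigma$, which for small $\sigma$ is $C^r$-close to the identity; the perturbation term is $C^r$-bounded by $\sigma\,\|\partial_\theta\tau\|_{C^r}\,\|Y\|_{C^r}$, which can be made arbitrarily small by choosing $N$ large (so $\sigma\le 2^{-N}$ is small) and $\cN_c$ small enough. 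Thus $g\in\cN$, and its first-return time to $\Delta_\sigma$ is close to $2^k$ and bounded, so $g$ is a plugin of step $2^{-k}$.

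The main work---and the step I expect to be the main obstacle---is verifying that the output $H_\sigma\circ g^\tau\circ H_\sigma^{-1}$ is exactly $G$ (and not merely a $\theta$-averaged version of it). Parametrizing the orbit of a point $(\theta_0,y_0)\in\Delta_\sigma$ by the new variable $u=\tau(\theta(t))$, the flow equations for $X_k$ become
\begin{equation*}
\frac{dy}{du}=\frac{Y^M(u,\theta(u),y)}{1+\partial_\theta\tau(\theta(u))\,Y^\theta(u,\theta(u),y)},\qquad \theta(u)=\tau^{-1}(u),
\end{equation*}
so that when $Y^\theta\equiv0$ (the situation of Proposition~\ref{Dil}) we recover the time-$1$ flow of $Y^M$ on $M$ directly, and the plugin output on the $M$-coordinate is as desired. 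In the general case the obstacle is that, along the orbit, $\theta(u)=\tau^{-1}(u)$ is a \emph{fixed} function of $u$ independent of $\theta_0$; one must therefore refine the construction---for instance by coupling the $\theta$-argument of $Y$ with the starting slice via a correction term involving $H_\sigma$, or by decomposing $G$ into pieces that are handled separately by the constructions of Propositions~\ref{twist} and~\ref{Dil} combined with the $\star$-product---so that the return map truly realizes the full $(\theta,y)$-dependence of $G$ on $\T\times M$. Finally, the parametric version follows verbatim by applying the same construction to a smooth family $(Y_p)_{p\in\cP}$, since all choices can be made $C^\infty$ in~$p$.
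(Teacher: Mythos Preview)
Your proposal has a genuine gap that you yourself identify but do not close. The flow construction generalizing \cref{Dil} cannot recover the $\theta$-dependence of $G$: along an orbit of $X_k$, the angular position $\theta(u)$ is a fixed reparametrization of the clock $u=\tau(\theta)$, independent of the initial slice $\theta_0\in[0,\sigma)$. Hence the first-return map you obtain depends on $y_0$ but is \emph{constant in the rescaled variable $\theta_0/\sigma$}, so its output lies in $\G_2$ rather than being an arbitrary $G$ close to $\id$. Your suggested fixes do not work as stated: coupling the $\theta$-argument of $Y$ to the initial slice would make $X_k$ non-autonomous in a way incompatible with being a single diffeomorphism, and decomposing $G$ as a $\star$-product of elements handled by \cref{twist} and \cref{Dil} is essentially the content of the entire main theorem, not something available at this stage.

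The paper takes a completely different route. The key idea is a \emph{zoom-in} construction (Sub-Lemma~\ref{s.zoomin}): one first builds, for each $k$, a plugin $g_k\in\cN$ with output $\id$ whose dynamics stretches the thin domain $\Delta_{2^{-k}}$ onto a domain of \emph{fixed} width $\delta$ (independent of $k$), namely some $[\theta_k,\theta_k+\delta)\times M$ on which $g_k$ acts by translation by $\delta$. One then perturbs $g_k$ on this wide strip by inserting a $\delta$-rescaled copy of a lift $\tilde G$ of $G$; since the width is fixed, the $C^r$-size of this perturbation is controlled by $\|G-\id\|_{C^r}$ uniformly in $k$. This directly gives a plugin with output $G$ provided $G=\id$ near $\{0\}\times M$ (or near $\{\tfrac12\}\times M$). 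The general case follows by a short fragmentation $G=G_1\circ G_0$ via the exponential map, with $G_0=\id$ near $\{0\}\times M$ and $G_1=\id$ near $\{\tfrac12\}\times M$, and then applying the $\star$-product. The zoom-in step is precisely what resolves the obstacle you encountered: it converts the problem of realizing $\theta$-dependence at scale $\sigma$ into one at a fixed scale $\delta$.
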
 

Note that the latter lemma is redacted for any regularity $1 \le r \le \infty$. It will allow to deduce \cref{main} from \cref{mainplug}.  

\begin{proof}[Proof of \cref{c.closure}] 
It suffices to show that the set of  semi-pluggable maps is closed. Indeed, the continuity of the involution $G\mapsto G^{-1}$ implies that the set of maps with   semi-pluggable  inverse is closed; and so it comes that the intersection $\P$ of these two sets is closed. 

Let $(G_j)_{j \geq 0 }$ be a sequence semi-pluggable  maps  converging in the $C^\infty$-topology to a diffeomorphism $ G \in \Diff_c^\infty(\T\times M) $.
Let us show that $G$ is semi-pluggable. 
 In other words, let us show that for every neighborhood $\cV$ of  $\id\in \Diff^\infty (\T\times M)$, for every $k$ large enough, there exists a plugin $g\in  \cV$ with output $G$ and step $2^{-k}$. 

To this end, let us fix a small neighborhood $\cN$ of  $\id \in \Diff^\infty (\T\times M)$ and $j$ large so that $  G_j^{-1}\circ G$ belongs to the open set $\cN_c$ given by \cref{l.neigh.of.Id}. Hence for every $k\ge 0$ large enough, there exists a plugin $g_1\in \cN$ with output  $ G_j^{-1}\circ G$ and step $2^{-k}$.
As $G_j$ is pluggable, for every $k$ large enough,  there exists a plugin $g_0\in \cN$ with output  $G_j$ and step $2^{-k}$.
 Now we merge the plugins $g_1$ and $g_0$ to obtain a plugin $g= g_1\star g_0$ of $G$ of step $2^{-k-1}$. By \cref{p.continuityatId}, when $\cN$ is small, the map $g$ is close to identity and so in  $\cV$. 
\end{proof}

 The idea of the proof of  \cref{l.neigh.of.Id}  is to find, for each fixed small $\delta>0$,  a sequence of close to identity plugins  $g_k$ with step $2^{-k}$ and output the identity such that some iterates of each $g_k$ stretch the $2^{-k}$-thin fundamental domain $\Delta_{2^{-k}}$ onto a wider fundamental domain, isometric to $[0,2\delta[\times M$. The iteration by $g_k$ will produce a horizontal zooming effect on $\Delta_{2^{-k}}$.  Then we will be able to perturb the plugin on this stretched fundamental domain, to obtain an open set of outputs independent of~$k$.

 The following produces the sequence $(g_k)_k$:
  \begin{sublemma} \label{s.zoomin}
For every neighborhood   $\cN$   of $\id\in \Diff^\infty(\T \times M)$, for every  $\delta>0$ small,  there exists $N\ge 1$ and  a sequence $(g_k)_{k\geq N}$ of plugins in $\cN$  with step $2^{-k}$, output $\id$ such that:  
\begin{itemize}
\item for all $y\in M$ and $\theta\in  [\tfrac12,\tfrac23]$;   we have  $g_k(\theta,y)=(\theta+ \delta ,y),$ 
\item $g_k$ is of the form $g_k: (\theta, y)\in \T\times M \mapsto  (\phi_k(\theta), y)$ where $\phi_k$ is the time-$1$ map of a flow.  
\end{itemize}
\end{sublemma}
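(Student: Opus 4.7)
The plan is to take each $g_k$ of the form $g_k(\theta,y)=(\phi_k(\theta),y)$, where $\phi_k$ is the time-one map of a smooth horizontal vector field $X_k\,\partial_\theta$ on $\T$; the key trick is to tune the profile $X_k$ so that its total flow period $\int_\T d\theta/X_k(\theta)$ equals some integer $q_k$, which forces $\phi_k^{q_k}=\id_\T$ and thus makes the first return map to $\Delta_{2^{-k}}$ the identity, so that the output of $g_k$ is automatically $\id_{\T\times M}$.

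Concretely, I would build a smooth positive $X_k\colon\T\to(0,\infty)$ with the following structure: $X_k\equiv 2^{-k}$ on a symmetric neighborhood of $0\in\T$ containing $[0,2\cdot 2^{-k}]$, so that $\phi_k$ coincides with $R_{2^{-k}}$ on $\Delta_{2^{-k}}$; $X_k\equiv\delta$ on $[\tfrac12,\tfrac23+\delta]$, so that $\phi_k$ is translation by $\delta$ on $[\tfrac12,\tfrac23]$; and $X_k\in[2^{-k},\delta]$ everywhere, with smooth interpolation profiles of \emph{fixed width} on the two free arcs of $\T$. To additionally force $\int_\T d\theta/X_k\in\N$, I would introduce a one-parameter family (for instance by varying the height of a small smooth bump supported in a free arc) and invoke the intermediate value theorem, since the integral depends continuously on the parameter and its range spans more than a unit length.

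Given such $X_k$, set $g_k(\theta,y):=(\phi_k(\theta),y)$. The plugin-step condition and the $\delta$-translation on $[\tfrac12,\tfrac23]$ then hold by construction. For the first-return time, I would observe that the $\phi_k$-orbit of any $\theta\in[0,2^{-k})$ consists of $q_k$ cyclically ordered points on $\T$: since $X_k\equiv 2^{-k}$ near $0$ on both sides, the successor $\phi_k(\theta)$ lies in $[2^{-k},2\cdot 2^{-k})$ and the predecessor $\phi_k^{q_k-1}(\theta)$ lies in $[1-2^{-k},1)$. The two arcs joining these three points across $0$ already cover $[0,2^{-k})$, so no intermediate orbit point re-enters this set. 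Hence the first return time is constantly $q_k$, and $\phi_k^{q_k}=\id$ gives that the rescaled first return map, i.e.\ the output, is $\id_{\T\times M}$.

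The main obstacle is to keep $g_k$ in the prescribed $\cN$ while meeting the integer-integral condition. Because the interpolation widths are fixed and $X_k$ takes values in $[0,\delta]$, one obtains $\|X_k\|_{C^r}\le C_r\delta$ with constants $C_r$ independent of $k$; standard estimates on time-one maps of small vector fields then yield $\phi_k$ (hence $g_k$) arbitrarily $C^\infty$-close to the identity once $\delta$ is small and $k$ large, so that $g_k\in\cN$ for every $k\ge N$. The subtlety is that the integer-integral adjustment must be performed on a free arc whose length is bounded below independently of $k$ (for instance $(2\cdot 2^{-k},\tfrac12)$), so that a small $C^\infty$-bounded perturbation suffices to move $\int d\theta/X_k$ by a full unit and land on an integer without enlarging $\|X_k\|_{C^\infty}$.
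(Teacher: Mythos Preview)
Your proposal is correct and follows essentially the same approach as the paper: both realize $g_k$ as the time-one map of a small positive vector field $X_k\,\partial_\theta$ on $\T$ (equal to $2^{-k}$ near $0$ and to $\delta$ on $[\tfrac12,\tfrac23]$), and both use the intermediate value theorem on a one-parameter family to force the total period $\int_\T d\theta/X_k$ to be an integer so that the first return map is the identity. The only cosmetic differences are that the paper parametrizes by the height $\beta$ of $X_k$ on a fixed second interval $B=[\tfrac14,\tfrac13]$ and targets the period $2^k$ exactly, whereas you perturb by a bump and target any integer; your first-return-time argument via cyclic ordering of the orbit is in fact spelled out more carefully than in the paper.
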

\begin{proof} Let $A= [\tfrac12, \tfrac23]$ and $B= [\tfrac 14, \tfrac13]$. Let $\psi_A$, $\psi_B\in C^\infty(\T, [0,1]) $ be two non-negative functions with disjoint supports, vanishing at a neighborhood of $0$ and  such that:
\begin{equation}  \psi_A |A  =1 \qand \psi_B |B  =1 \; . \end{equation} 
For $ \beta\ge 0$ we define the following vector field on the circle $\T$:
\begin{equation} X_{ \beta,k}:= \delta\cdot \psi_A+\beta\cdot \psi_B+(1-\psi_A-\psi_B)\cdot 2^{-k}.\end{equation} 
Let $\tau_{  \beta, k}$ be the time needed to make one turn around the circle along the flow of $X_{ \beta}$.  This number is large since $\delta$ is small. Note that $\tau_{ \beta, k}$ depends smoothly on $  \beta>0$. Also $\tau_{ \beta, k}\to \infty$ when $\beta\to 0$ and  $\partial_\beta \tau_{ \beta, k}<0$. 
Let $N\ge 1$ be so that $\delta>2^{-N}$. Let $k\ge N$. We have  $\delta>2^{-k}$. If  $\beta=2^{-k}$, then the time $\tau_{ \beta, k}$ is smaller than $2^k$. Thus by the mean value theorem, there exists a unique $\beta= \beta(k,\alpha)$ close to $2^{-k}$ such that $\tau_{\delta, \beta(k,\delta), k}=2^k$. 

Then the time $1$ map $g_k$ of the flow of $(X_{  \beta(k),k},0)$   satisfies the desired properties.\end{proof}
We have now the tools to prove the following restricted version of  \cref{l.neigh.of.Id}:

  \begin{sublemma}\label{claim.neigh.of.Id} 
  For any $ 1 \le r \le \infty$, for any neighborhood $\cV$ of  $\id\in \Diff^r (\T\times M)$, there exist $N\ge 1$ and a neighborhood $\cN_c$ of $\id\in \Diff_c^r (\T\times M)$ such that for any $k\ge N$ and
 every  $G\in \cN_c$ whose restriction to a neighborhood of $\{0\}\times M$ or a neighborhood of $\{\tfrac12\}\times M$ is the identity, there is a $C^r$-plugin $g\in  \cV$ with output $G$ and step~$2^{-k}$.  
\end{sublemma}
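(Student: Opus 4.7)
The plan is to take the sequence of plugins $(g_k)_{k\ge N}$ produced by Sub-Lemma~\ref{s.zoomin} for a suitable $\delta>0$ depending on $\cV$, and perturb each $g_k$ inside a ``wide'' fundamental domain in its orbit, so as to change the output from the identity to $G$ while keeping the perturbed plugin in $\cV$. For each $k$, a well-chosen iterate $D_A := g_k^{j_A}(\Delta_\sigma)$ is a wide fundamental domain of the form $[1/2,1/2+\delta)\times M$ sitting in the region $[1/2,2/3]\times M$ where $g_k$ acts by translation by $\delta$ (up to a mild adjustment of the cutoffs in \cref{s.zoomin}, this can be made exact). The associated rescaling
\[
h := H_\sigma\circ g_k^{-j_A}\colon D_A\to\T\times M
\]
is then an affine dilation by $1/\delta$ in the $\theta$-direction, and both boundary faces of $\overline{D_A}$, namely $\{1/2\}\times M$ and $\{1/2+\delta\}\times M$, are identified with $\{0\}\times M\subset\T\times M$.

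When $G$ is the identity on a neighborhood of $\{0\}\times M$, I would define $\Phi := h^{-1}\circ G\circ h$ on $D_A$ and $\Phi := \id$ elsewhere. The hypothesis forces $\Phi=\id$ near $\partial D_A$, so $\Phi$ extends smoothly to a diffeomorphism of $\T\times M$ compactly supported in $D_A$. I set $g := \Phi\circ g_k$; since $D_A$ is disjoint from $g_k(\Delta_\sigma)$, one has $g|_{\Delta_\sigma}=R_\sigma$, and since $\Phi(D_A)\subset D_A$, the first return time of $g$ on $\Delta_\sigma$ equals $\tau=2^k$ and the first return map factors as $g^\tau = g_k^{\tau-j_A}\circ\Phi\circ g_k^{j_A}$. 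Using $g_k^\tau|_{\Delta_\sigma}=\id$ (the output of $g_k$ being the identity), a direct computation then yields
\[
H_\sigma\circ g^\tau\circ H_\sigma^{-1} = h\circ\Phi\circ h^{-1} = G,
\]
so $g$ is a plugin of step $\sigma=2^{-k}$ with output $G$. For the $C^r$-estimate on $g=\Phi\circ g_k$, the map $g_k$ is close to $\id$ by \cref{s.zoomin}, and writing $G=\id+u$ I would derive $\|\Phi-\id\|_{C^m}\le C_m(\delta)\|u\|_{C^m}$ for every $m\le r$, with the worst factor $\delta^{-m}$ coming from $\theta$-derivatives of the $y$-component. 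Since $\delta$ is fixed once $\cV$ is chosen, the constants $C_m(\delta)$ are uniform in $k$, and $\cN_c$ can be taken small enough so that $g\in\cV$ for every $k\ge N$.

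The case where $G$ is identity near $\{1/2\}\times M$ will be reduced to the previous one by conjugation with the rotation $R_{1/2}(\theta,y):=(\theta+1/2,y)$. A straightforward variant of \cref{s.zoomin} (inserting an additional slow region of the vector field near $\{1/2\}$) produces close-to-identity plugins $g_k'$ that translate by $\sigma$ on both $[0,\sigma)\times M$ and $[1/2,1/2+\sigma)\times M$. The conjugate $\tilde g_k:=R_{1/2}^{-1}\circ g_k'\circ R_{1/2}$ is then itself a plugin of step $\sigma$ (the translation-by-$\sigma$ on the second slow region of $g_k'$ becomes the plugin translation on $\Delta_\sigma$ for $\tilde g_k$), and its output equals the conjugate by $R_{1/2}^{-1}$ of the output of $g_k'$. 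Since $G_0:=R_{1/2}\circ G\circ R_{1/2}^{-1}$ is identity near $\{0\}\times M$, the first case applied to $G_0$ and $g_k'$ produces a perturbation of $g_k'$ with output $G_0$, whose $R_{1/2}$-conjugate is a plugin close to $\id$ with output $G$.

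The main obstacle will be the $C^r$-estimate on $\Phi$ in the first case: the $1/\delta$-dilation in $h$ amplifies certain mixed derivatives of $G-\id$ by a factor $\delta^{-m}$, so a priori a $C^r$-small $G$ need not give a $C^r$-small $\Phi$. The key point enabling the proof is that $\delta$ is fixed once $\cV$ is chosen, so the amplification factors are bounded uniformly in $k$ and can be absorbed by shrinking $\cN_c$ accordingly.
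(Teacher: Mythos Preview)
Your Case 1 is correct and matches the paper's construction. One point you assert without justification is that $h=H_\sigma\circ g_k^{-j_A}$ is affine; this is true, and it is the crux of the uniform-in-$k$ estimate, but it relies on the fact that $g_k$ is the time-$1$ map of an \emph{autonomous one-dimensional} vector field $X_k$: for such a flow the derivative of $\phi_k^{j}$ at $\theta$ equals $X_k(\phi_k^{j}(\theta))/X_k(\theta)$, and since $X_k\equiv\sigma$ on $\Delta_\sigma$ and $X_k\equiv\delta$ on $D_A$ this ratio is the constant $\delta/\sigma$. The paper sidesteps this by writing the perturbation directly via the explicit affine rescaling by $\delta$. (Incidentally, no adjustment of \cref{s.zoomin} is needed: the paper lets $\theta_k:=\phi_k^{n_k}(0)$ be whatever it is in $[\tfrac12,\tfrac23)$ and perturbs on $[\theta_k,\theta_k+\delta)$.)

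Your Case 2 contains a real gap. The claim that the output of $R_{1/2}^{-1}g_k''R_{1/2}$ equals the $R_{1/2}$-conjugate of the output of $g_k''$ is not correct in general: the rescaling $H_\sigma$ is anchored at $0$, so conjugating the plugin by $R_{1/2}$ replaces the first return to $[0,\sigma)$ by the first return to $[\tfrac12,\tfrac12+\sigma)$, and these two return maps are related by conjugation with the rescaled \emph{passage map} $\alpha$ between the two slow domains, not with $R_{1/2}$. One obtains output $\alpha G_0\alpha^{-1}$ rather than $R_{1/2}^{-1}G_0R_{1/2}$, and $\alpha$ has no reason to equal $R_{1/2}^{\pm1}$ without further tuning of $g_k'$. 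The paper avoids this entirely: it keeps the same $g_k$ and simply shifts the perturbation window by $\delta/2$, from $[\theta_k,\theta_k+\delta)$ to $[\theta_k+\tfrac\delta2,\theta_k+\tfrac32\delta)$, so that the endpoints of the window now correspond under the affine rescaling to $\{\tfrac12\}\times M$ rather than $\{0\}\times M$. This is why the paper arranges $g_k$ to be the translation by $\delta$ on an interval of length $\tfrac32\delta$.
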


\begin{proof}[Proof of \cref{claim.neigh.of.Id} ]
Let $\cN$ be a neighborhood of $\id\in \Diff^r(\T\times M)$ such that\footnote{in the sense that the distance between $\cN$ and the complement of $\cV$ is positive.} $\cN\Subset \cV$. We apply \cref{s.zoomin} which provides $\delta>0$,  $N\ge 1$ and a sequence $(g_k)_{k\ge N}$  of plugins $g_k:(\theta, y)\mapsto  (\phi_k(\theta), y)$ with output $\id$ and step $2^{-k}$. 
By \cref{s.zoomin}, there exists  $n_k>0$ minimal such that $\theta_k:=\phi^{n_k}_k (0) \in [\tfrac12, 1]$.  Taking $\cN$ small, we have that  $\theta_k$ is smaller than $\tfrac23 - \tfrac32 \delta$, and so $g_k$ equals the translation by $\delta$ on  $(\theta_k, \theta_k+\tfrac32 \delta)$. 
Let $\cN_c$ be a small neighborhood of  $\id\in \Diff_c^r(\T\times M)$. Let $G\in \cN_c$ be equal to the identity  near  $\{0\}\times M$ or  $\{\tfrac12 \}\times M$. We would like to $C^r$-perturb $g_k$ so that its output is $G$.  

\emph{Case 1:} If  $G\in \cN_c$ coincides with the identity near $\{0\}\times M$, then we  perform a perturbation of  $g_{k}$ supported by  $(\theta_k ,\theta_k+\delta)\times M$ and therein equals to:
\begin{equation}  \begin{array}{rcl}\tilde g_k : [\theta_k ,\theta_k+\delta]\times M&\to& [\theta_k+\delta ,\theta_k+2\delta]\times M\\
   (\theta_k+x,y)&\mapsto &(\theta_k+\delta,0)+\delta\cdot \tilde G(\delta^{-1} x  ,y)\end{array}
       \end{equation} 
where $\tilde G:\R\times M\to \R\times M $  is a lifting of $G$ which fixes $\{0\}\times M$. 
Note that $\tilde g_k$ is a $C^r$-plugin with output $G$ and step $2^{-k}$. Furthermore, if $\cN_c$ is small enough (at $\delta$ fixed), then for every $k\ge N$, the map  $\tilde g_k$ is uniformly close to $g_k\in \cN$ in the $C^r$-topology and so  $\tilde g_k$ belongs to $\cV$.

\emph{Case 2:} If  $G\in \cN_c$ coincides with the identity near $\{\tfrac12\}\times M$, then we  perform a perturbation of  $g_{ k}$ supported by  $(\theta_k +\tfrac\delta2  ,\theta_k+\tfrac32\delta )\times M$ and therein equals to:
   \begin{equation} \begin{array}{rcl}\tilde g_k :[\theta_k +\tfrac\delta2,\theta_k+\tfrac32\delta ]\times M &\to& [\theta_k +\tfrac32\delta,\theta_k+\tfrac52\delta ]\times M\\
   (\theta_k +\tfrac\delta2+x,y)&\mapsto& (\theta_k +\tfrac\delta2+\delta,0)+\delta\cdot \tilde G(\delta^{-1} x  ,y) \; . 
   \end{array}
      \end{equation} 
Similarly, this is a $C^r$-plugin of output $G$ and step $2^{-k}$, which is in $\cV$ for every $k$ provided that $\cN_c$ is small enough.  
\end{proof}  

\begin{proof}[Proof  that \cref{claim.neigh.of.Id}  implies \cref{l.neigh.of.Id} ] 
First let us `fragment' any $C^r$-close to identity map $G\in \cN_c$ into the composition of two $C^r$-maps $G_1\circ G_0$ such that $G_0$ coincides with the identity near $\{0\}\times M$  and 
 $G_1$ coincides with the identity near $\{\tfrac12\}\times M$.

To this end, we use the exponential map $\exp$ associated to the geodesic flow of $\T\times M$ and a function $\rho\in C^\infty(\T\times M, [0,1]) $ such that $\rho|\{0\}\times M=1$ and $\rho|\{\tfrac12 \}\times M =0$.  Let   $\cN_c$ be a sufficiently small neighborhood of $\id\in \Diff^\infty_c(\T\times M)$   such  that for every $G\in \cN_c$ the following is a smooth diffeomorphism:
\begin{equation} G_0:= x\mapsto \exp(\rho (x) \cdot \exp_x^{-1}G(x)) \; . \end{equation} 
Let $G_1:= G\circ G_0^{-1}$ and note that  $G= G_1\circ G_0$. 

\cref{claim.neigh.of.Id} states that there are $C^r$-plugins $g_1$ and $g_0$  close to identity with step $2^{-k}$ and outputs $G_1$ and $G_0$ for every $k$ large enough. Then by \cref{p.continuityatId} and \cref{outputstar}, the $C^r$-plugin $g_1\star g_0$ of step $2^{-k-1}$ is close to identity  and has output $G_1\circ G_0=G$.
\end{proof}

 \begin{proof}[Proof that \cref{mainplug} implies \cref{main} \label{lieu proof main}]
 When $r = \infty$, the result of \cref{main} corresponds to the one of \cref{mainplug}. Consider now $ r < \infty$. Let $ G \in \Diff_c^r( \T \times M) $, and  $ \cN \subset \cV $ two neighborhoods of $ \id \in \Diff_c^r( \T \times M) $. We smooth the map $ G $ into a map $ \tilde G \in \Diff_c^\infty( \T \times M)  $ such that the map $ \tilde G^{-1} \circ G $ belongs to the neighborhood $ \cN_c$ given by \cref{l.neigh.of.Id}. Then for every $ k \ge 0$ large enough, there exists a plugin $ g_0 \in \cN$ with output $ \tilde G^{-1} \circ G $ and step $ 2^{-k} $. By \cref{mainplug}, for $k$ large enough, there exists a plugin $ g_1  \in \cN$ with output $ \tilde G$. We merge $g_0$ and $ g_1$ to get a plugin $ g = g_1 \star g_0 $ with output $G$. By \cref{p.continuityatId}, when $ \cN $ is small, the map $g$ is in $ \cV$. 
 % When $r = \infty$, the result of \cref{main} corresponds to the one \cref{mainplug}. For the case when $r$ is finite, we first smooth the map $ G$ to obtain a map $ \tilde G  \in \Diff_c^\infty( \T \times M )$ arbitrarily close to $G$ for the $C^r$-topology. By \cref{mainplug} there exists a sequence of plugins $ (\tilde g_k)_{k \ge 1}  $ with step $2^{-k} $ such that the output of $ \tilde g_k $ is $\tilde G$ for $k $ large enough and $  \tilde g_k $ tends to $ \id$ in the $C^\infty$-topology, when $ k$ approaches infinity. Observe now that $ F :=  G \circ \tilde G^{-1}   $ is close to identity. So we can apply \cref{l.neigh.of.Id} which provides a sequence of $C^r$-plugins $( f_k)_{k \ge 1} $with output $ F$ for $k$ large enough and such that $f_k \xrightarrow[k \to \infty]{C^r} \id $. 
 % For $k \ge 1 $ set $ g_k := f_k \star \tilde g_k$. Finally observe that the sequence $(g_k)_{k \ge 1} $ satisfies the conditions of  \cref{main}. 
 \end{proof}

 \paragraph{Parametric  counterpart.} \label{lieu cas CR}
Here is the parametric  counterpart of \cref{c.closure}: 
 \begin{proposition}\label{c.closure cP}
The set $\P_\cP$ is closed. 
 \end{proposition}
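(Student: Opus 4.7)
The plan is to mimic the proof of \cref{c.closure} step by step, promoting each ingredient to its $\cP$-parametric counterpart. First, since the inversion map $G_\cP \mapsto G_\cP^{-1} = (G_p^{-1})_{p\in\cP}$ is continuous on $\Diff^\infty_c(\T\times M)_\cP$ (it corresponds to $\widehat{G_\cP} \mapsto \widehat{G_\cP}^{-1}$ under the embedding of \cref{morphism cP}, which is a topological group morphism), it suffices to show that the set of $\cP$-semi-pluggable maps is closed: then $\P_\cP$, being the intersection of this closed set with the preimage of this set under inversion, is closed too.

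The core task is then to establish a parametric analogue of \cref{l.neigh.of.Id}: for any neighborhood $\cN$ of $(\id)_{p\in\cP}$ in $\Diff^\infty(\T\times M)_\cP$, there exist $N\ge 1$ and a neighborhood $\cN_c$ of $(\id)_{p\in\cP}$ in $\Diff^\infty_c(\T\times M)_\cP$ such that for every $G_\cP\in\cN_c$ and every $k\ge N$, there is a $\cP$-plugin $g_\cP\in\cN$ with output $G_\cP$ and step $2^{-k}$. To produce this, I revisit the two sub-lemmas:~the plugins $g_k$ constructed in \cref{s.zoomin} are independent of any parameter, hence the constant family $(g_k)_{p\in\cP}$ is a $\cP$-plugin; as for \cref{claim.neigh.of.Id}, the fragment perturbations $\tilde g_k$ are defined by explicit formulas involving $G$ in a purely pointwise way, so replacing $G$ by a family $G_\cP$ of maps each coinciding with $\id$ near $\{0\}\times M$ (resp.\ near $\{\tfrac12\}\times M$) produces a smooth family $(\tilde g_{k,p})_{p\in\cP}$ which is a $\cP$-plugin with output $G_\cP$. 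The fragmentation $G_p=G_{1,p}\circ G_{0,p}$ in the implication ``\cref{claim.neigh.of.Id}~$\Rightarrow$~\cref{l.neigh.of.Id}'' is already given by the pointwise formula $G_{0,p}:=x\mapsto \exp(\rho(x)\cdot \exp_x^{-1}G_p(x))$ and depends smoothly on $p$, so it also passes to families without modification; finally, the $\star$-product of the resulting $\cP$-plugins is assembled from fiberwise $\star$-products, and \cref{group cP} (plus \cref{p.continuityatId}, which also passes to $\cP$-families) supplies an output $G_\cP$ and a step $2^{-k-1}$.

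With the parametric \cref{l.neigh.of.Id} in hand, the proof of closedness goes exactly like in \cref{c.closure}: take a sequence $(G_{j,\cP})_{j\ge 0}$ of $\cP$-semi-pluggable families converging to some $G_\cP\in\Diff^\infty_c(\T\times M)_\cP$, fix a small neighborhood $\cN$ of $(\id)_{p\in\cP}$ contained in a prescribed $\cV$, pick $j$ so large that $G_{j,\cP}^{-1}\circ G_\cP$ lies in the associated $\cN_c$, apply the parametric \cref{l.neigh.of.Id} to obtain, for every large $k$, a $\cP$-plugin $g_1\in\cN$ with output $G_{j,\cP}^{-1}\circ G_\cP$ and step $2^{-k}$, and use the $\cP$-semi-pluggability of $G_{j,\cP}$ to obtain a $\cP$-plugin $g_0\in\cN$ with output $G_{j,\cP}$ and step $2^{-k}$. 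The $\star$-product $g_1\star g_0$, computed fiberwise, is then a $\cP$-plugin of step $2^{-k-1}$ with output $G_\cP$ (by \cref{group cP}), and lies in $\cV$ once $\cN$ is small enough (by the parametric version of \cref{p.continuityatId}). This exhibits $G_\cP$ as $\cP$-semi-pluggable.

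The main obstacle, and the step that requires the most care, is the parametric version of \cref{l.neigh.of.Id}: one must check that the construction of the ``zooming'' plugins of \cref{s.zoomin} and the cut-and-paste perturbation of \cref{claim.neigh.of.Id} yield \emph{smooth} families of diffeomorphisms of $\T\times M$ indexed by $\cP$ (equivalently, a diffeomorphism of $\T\times M\times\cP$ preserving the fibers), with size uniformly controlled so that the enclosing neighborhood $\cN$ of $(\id)_{p\in\cP}$ can be chosen to lie in any prescribed $\cV$. Once this uniformity is verified, everything else is a direct translation of the previous arguments.
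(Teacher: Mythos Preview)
Your proposal is correct and follows essentially the same route as the paper: the paper states \cref{l.neigh.of.Id cP} separately and proves it by observing that the explicit constructions in \cref{claim.neigh.of.Id} and the fragmentation formula depend smoothly on $G$, then says the proof of \cref{c.closure cP} proceeds literally as in \cref{c.closure}, with the parametric continuity of $\star$ justified via the embedding into $\Diff^r(\T\times M\times\cP)$. You have spelled out these same steps in more detail than the paper does.
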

To show this proposition we will use the following counterpart of \cref{l.neigh.of.Id} proved below: 
 \begin{lemma}
 \label{l.neigh.of.Id cP}
For any $ 1 \le r \le \infty  $,
for any neighborhood $\cN $ of  $\id\in \Diff^r (\T\times M)$, there exists $N\ge 1$ and a neighborhood $\cN_{c} $ of $\id\in \Diff_c^r (\T\times M)_\cP$ such that for 
 all  $ ( G _p )_{ p \in \cP}  \in \cN_{c } $ and   $k\ge N$, there is a $ \cP$-$C^r$-plugin $(g_p)_{p \in \cP} \in \cN $ with output $( G _p )_{ p \in \cP}$ and step~$2^{-k}$. 
 \end{lemma}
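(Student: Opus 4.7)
The plan is to mirror the proof of \cref{l.neigh.of.Id} step by step, verifying at each stage that the objects constructed depend smoothly on the parameter $p\in\cP$ and that all estimates are uniform over the compact manifold $\cP$. In other words, I would establish a parametric $\cP$-version of \cref{s.zoomin} and of \cref{claim.neigh.of.Id}, then fragment parametrically and concatenate via the $\star$-product.

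First, I would observe that the plugins $g_k\colon(\theta,y)\mapsto(\phi_k(\theta),y)$ produced by \cref{s.zoomin} are built independently of any target diffeomorphism: they depend only on $k$, on the auxiliary bump functions $\psi_A,\psi_B$, and on the chosen step $\delta>0$. The constant family $(g_{k})_{p\in\cP}$ is therefore already a $\cP$-plugin of step $2^{-k}$ with output the identity: its hat $\widehat{(g_{k})_{p\in\cP}}=(g_k\times\id_\cP)$ sits in the corresponding neighborhood of $\id\in\Diff^r(\T\times M\times\cP)$.

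Next, I would prove the parametric analogue of \cref{claim.neigh.of.Id}: for any neighborhood $\cV_\cP$ of $\id\in\Diff^r(\T\times M)_\cP$ there is $N\ge1$ and a neighborhood $\cN_{c}$ of $\id\in\Diff_c^r(\T\times M)_\cP$ such that any family $(G_p)_{p\in\cP}\in\cN_{c}$ whose elements are all equal to $\id$ on a $p$-independent neighborhood of $\{0\}\times M$ (resp.\ $\{\tfrac12\}\times M$) is the output of a $\cP$-$C^r$-plugin of step $2^{-k}$ in $\cV_\cP$, for every $k\ge N$. The construction is pointwise:
\begin{equation*}
\tilde g_{k,p}(\theta_k+x,y):=(\theta_k+\delta,0)+\delta\cdot\tilde G_p(\delta^{-1}x,y)
\end{equation*}
on the appropriate strip and $\tilde g_{k,p}=g_k$ elsewhere, where $\tilde G_p$ is the canonical lift fixing $\{0\}\times M$ (or $\{\tfrac12\}\times M$). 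Since this lift depends smoothly on $p$ whenever $(G_p)_{p\in\cP}$ does, the family $(\tilde g_{k,p})_{p\in\cP}$ is a $\cP$-plugin, and the $C^r$-distance between $\widehat{(\tilde g_{k,p})_p}$ and $\widehat{(g_k)_p}$ is uniformly controlled by the $C^r$-norm of $\widehat{(G_p)_p-\id}$, thanks to the compactness of $\cP$.

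Finally, I would fragment and concatenate. For $(G_p)_{p\in\cP}\in\cN_{c}$ small, set
\begin{equation*}
G_{0,p}:=x\mapsto\exp\bigl(\rho(x)\cdot\exp_x^{-1}G_p(x)\bigr),\qquad G_{1,p}:=G_p\circ G_{0,p}^{-1}
\end{equation*}
with the same fixed cut-off $\rho$ as in the proof of \cref{l.neigh.of.Id}. Since $\rho$ is $p$-independent, the families $(G_{0,p})_{p\in\cP}$ and $(G_{1,p})_{p\in\cP}$ are smooth in $p$, lie in the parametric neighborhood $\cN_c$ once the original one is chosen small enough, and satisfy the identity-near-$\{0\}\times M$ (resp.\ near-$\{\tfrac12\}\times M$) hypothesis uniformly in $p$. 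Applying the previous step produces, for every $k$ large, $\cP$-plugins $(g_{0,p})_{p\in\cP}$ and $(g_{1,p})_{p\in\cP}$ of step $2^{-k}$ arbitrarily close to the identity with outputs $(G_{0,p})_{p\in\cP}$ and $(G_{1,p})_{p\in\cP}$. Applying the parametric concatenation (the parametric analogue of \cref{outputstar} used in \cref{group cP}) yields the $\cP$-plugin $(g_{1,p}\star g_{0,p})_{p\in\cP}$ of step $2^{-k-1}$ with output $(G_p)_{p\in\cP}$; by the parametric version of \cref{p.continuityatId} applied to $\widehat{(g_{i,p})_p}$ inside $\Diff^r(\T\times M\times\cP)$, this plugin lies in $\cN$ as required.

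The main obstacle is the second step: one has to check that the $p$-dependent perturbation $\tilde g_{k,p}$ remains a $\cP$-plugin globally on $\T\times M\times\cP$ (not merely fiberwise) and that its distance to $\widehat{(g_k)_p}$ is controlled \emph{uniformly} in $p$. Both points reduce to the smoothness of $p\mapsto \tilde G_p$ (which follows from the definition of $C^r(V)_\cP$) together with compactness of $\cP$; the explicit formulas above then inherit their $C^r$-continuity in the Whitney sense from the non-parametric case, so no new analytic ingredient is needed beyond \cref{morphism cP}.
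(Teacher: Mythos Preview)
Your proposal is correct and follows exactly the same route as the paper: observe that the zoom-in plugins $g_k$ of \cref{s.zoomin} are parameter-free, that the explicit perturbation $\tilde g_k$ in \cref{claim.neigh.of.Id} depends smoothly on $G$, and that the fragmentation $G=G_1\circ G_0$ via the exponential formula also depends smoothly on $G$. The paper's own proof condenses all of this into two sentences, so your version is simply a fleshed-out form of the same argument.
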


\begin{proof}[Proof of \cref{c.closure cP}]
We proceed literally as in the proof \cref{c.closure}, by applying \cref{l.neigh.of.Id cP}  instead of \cref{l.neigh.of.Id}, and considering families instead of single maps. Note that the continuity of $\star $ given by \cref{p.continuityatId} is also valid for families since the embedding ${(f_p)_p\in  \Diff^\infty (\T\times M)_\cP\mapsto f_\cP\in  \Diff^r (\T\times M\times \cP)}$ commutes with the $\star$-product. 
\end{proof}

\begin{proof}[Proof of \cref{l.neigh.of.Id cP}]
The explicit construction of \cref{claim.neigh.of.Id} gives directly a parametric counterpart of this lemma. Indeed note that the maps $ \tilde g_k $ defined in its proof depends smoothly on $ G $.
Moreover the maps $ G_0$ and $G_1$ obtained in the proof of \cref{l.neigh.of.Id} by the fragmentation formula  depends smoothly on the involved diffeomorphism.
\end{proof}
Similarly, we show: 
  \begin{proof}[Proof that \cref{mainplug cP} implies \cref{mainprime}]
  This goes exactly the same as for the proof that \cref{mainplug} implies \cref{mainprime}. It suffices to replace maps in $ \Diff^r_c ( \T \times M)$ by families of maps in $ \Diff^r_c ( \T \times M)_\cP$, plugins by $ \cP$-plugins and \cref{l.neigh.of.Id} by \cref{l.neigh.of.Id cP}.  
% For any family $(G_p)_{p\in \cP}\in \Diff^r_{c}(   \T\times M)_\cP$, any neighborhood  $\cN_c$ of the identity in $ \Diff^r(   \T\times M)_\cP$, it suffices to show the existence of a $C^r$-family of plugins 
% $(g_p)_{p\in \cP} \in \Diff^r(\T\times M)_\cP$ in $\cN_c$ so that the output of $g_p$ equals $G_p$ for every~$p$.  

% When $r=\infty$ this is an immediate   consequence of \cref{mainplug cP}. For $r<\infty$, first we smooth the family $(G_p)_{p\in \cP}$ to a family $(\tilde G_p)_{p\in \cP}\in \Diff^\infty_{c}(   \T\times M)_\cP$ close to $(G_p)_{p\in \cP}$. Then we apply  \cref{mainplug cP} to obtain a family $(\tilde g_p)_{p\in \cP} \in \Diff^\infty(\T\times M)_\cP$ in $\cN_c$ so that the  output of $\tilde g_p$ equals $\tilde G_p$ for every $p$. Observe that  
% $(G_p \circ \tilde G_p^{-1})_{p\in\cP}$ is close to identity. 
% So we can apply \cref{l.neigh.of.Id cP} which provides a $C^r$-$\cP$-plugin $(f_p)_{p\in\cP}$ arbitrarily $C^r$-close to identity with output $(G_p \circ \tilde G_p^{-1})_{p\in\cP}$.
% This family can be chosen sufficiently $C^r$-close to identity so that 
% $(f_p\star \tilde g_p)_{p\in\cP}$ is in $\cN_c$. The output of the latter $\cP$-plugin is  $(G_p)_{p\in \cP}$.  
 \end{proof}

 \subsection{Vector fields whose flow is pluggable}
\label{subsec debut vector} 
 In this section $V$ denotes a manifold. We recall that $\Diff_c^\infty(V)$ endowed with the composition rule $\circ$  is a  Lie group. 
The space $\diff_c^\infty(V)$ of $C^\infty$-vector fields $X$ on $V$ whose support is a compact subset of $V\setminus \partial V$ is a Lie algebra endowed with the Lie bracket:
 \begin{equation*}  [X,Y]: = DY(X)-DX(Y)\;, \quad \forall X,Y\in\diff^\infty_c (V). \; \end{equation*} 
We will work with the Lie algebra counterpart of the considered subgroups. We recall that a subset of $\diff_c^\infty(V)$ is a {\em Lie subalgebra} if it is a vector space  stable by Lie Bracket. To define the counterpart, we will use  the flow $(\mathrm{Fl}_X^t)_t$ of vector fields  $X\in \diff^\infty_c (V)$.

\begin{definition}
We denote $\spp$ the set of  vector fields whose flow is pluggable. In short:
\begin{equation*}  \spp := \lbrace X \in \diff_c^\infty ( \T \times M) : \Fl_X^t \in \P, \ \forall t \in \R  \rbrace \; .  \end{equation*} 
\end{definition}

Using that $\P$ is a closed subgroup, the following is an immediate consequence of  \cref{prop lie 1} of \cref{a.lie}:
\begin{proposition} \label{p.sppclose}
The space $ \spp $ is a closed Lie subalgebra of $ \diff_c^\infty( \T \times M )$.  
\end{proposition}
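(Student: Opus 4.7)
The plan is to follow the standard correspondence between closed subgroups of a Lie group and Lie subalgebras, transposed to the infinite-dimensional Fréchet setting of $\Diff_c^\infty(\T\times M)$. Three things need to be checked: $\spp$ is closed, $\spp$ is an $\R$-vector subspace, and $\spp$ is stable under Lie bracket. Throughout, use that $\P$ is a closed subgroup of $\Diff_c^\infty(\T\times M)$ (\cref{group} and \cref{c.closure}), and that for $X\in \diff_c^\infty(\T\times M)$ the flow $t\mapsto \Fl_X^t$ is continuous into $\Diff_c^\infty(\T\times M)$ and continuous in $X$ for the Whitney $C^\infty$-topology, uniformly in $t$ on compact intervals (all relevant supports stay in a common compact subset of the interior).

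First I would verify \emph{closure}. If $(X_n)_n\subset \spp$ converges to $X\in \diff_c^\infty(\T\times M)$, then for each fixed $t\in \R$, $\Fl_{X_n}^t \to \Fl_X^t$ in $\Diff_c^\infty(\T\times M)$. Each $\Fl_{X_n}^t$ lies in $\P$, and $\P$ is closed by \cref{c.closure}, so $\Fl_X^t\in \P$. Hence $X\in \spp$.

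Next I would handle the \emph{vector space} structure. Scalar multiplication is trivial: $\Fl_{\lambda X}^t=\Fl_X^{\lambda t}$ shows $\R\cdot \spp\subset \spp$. For additivity, apply the Trotter product formula
\begin{equation*}
\Fl_{X+Y}^t = \lim_{n\to\infty}\bigl(\Fl_X^{t/n}\circ \Fl_Y^{t/n}\bigr)^n,
\end{equation*}
where convergence holds in $\Diff_c^\infty(\T\times M)$ (the supports of all iterates remain in a fixed compact set, so uniform convergence in all derivatives follows from the finite-dimensional Trotter formula applied locally). Each factor $\Fl_X^{t/n}\circ \Fl_Y^{t/n}$ is a composition of elements of $\P$, hence lies in $\P$ by \cref{group}; each $n$-fold product also lies in $\P$. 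Since $\P$ is closed, the limit $\Fl_{X+Y}^t$ belongs to $\P$, so $X+Y\in \spp$.

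Finally I would treat the \emph{Lie bracket} by the analogous commutator formula
\begin{equation*}
\Fl_{[X,Y]}^{t^2} = \lim_{n\to\infty}\bigl(\Fl_Y^{-t/n}\circ \Fl_X^{-t/n}\circ \Fl_Y^{t/n}\circ \Fl_X^{t/n}\bigr)^{n^2},
\end{equation*}
with convergence in $\Diff_c^\infty(\T\times M)$ for the same compact-support reason. Each bracketed factor is a composition of four elements of $\P$, hence lies in $\P$ by \cref{group}, and so do the $n^2$-th iterates; closure of $\P$ yields $\Fl_{[X,Y]}^{t^2}\in \P$ for all $t\in \R$, i.e. $\Fl_{[X,Y]}^s\in \P$ for all $s\ge 0$. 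Since $\P$ is stable under inversion and $\Fl_{[X,Y]}^{-s}=(\Fl_{[X,Y]}^s)^{-1}$, we conclude $[X,Y]\in\spp$.

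The only nontrivial point is justifying the Trotter and commutator limits in the Whitney $C^\infty$-topology. This is where I would expect to invoke the appendix result \cref{prop lie 1}: because all the relevant flows and products have supports contained in a single compact subset of $\T\times M\setminus\partial(\T\times M)$, the convergence reduces to convergence in any fixed uniform $C^r$-norm on that compact set, and the standard finite-regularity Trotter/commutator estimates apply termwise in the jets. With those limits in hand, the three closure properties above are immediate from the group and topological properties of $\P$.
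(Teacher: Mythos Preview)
Your proposal is correct and follows essentially the same approach as the paper: the paper deduces the proposition directly from the general \cref{prop lie 1} in the appendix, whose proof is precisely the Trotter/commutator argument you outline (using $\Fl_{X+Y}^1=(\Fl_X^{1/N}\circ\Fl_Y^{1/N})^N+O_{C^r}(N^{-1})$ and $\Fl_{[X,Y]}^{\tau^2}=[\Fl_X^\tau,\Fl_Y^\tau]+O(\tau^3)$, together with continuity of $X\mapsto \Fl_X^t$ for closure). The only cosmetic difference is that the paper phrases closure as an intersection of preimages of the closed set $\G$ under the continuous maps $\Fl^t$, rather than via sequences.
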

The following are closed Lie algebras of $\diff_c^\infty ( \T \times M)$: 
\begin{equation*}  \sg_1= \sg_1 ( M)  := \lbrace X \in \diff_c^\infty ( \T \times M) : \Fl_X^t \in \G_1, \ \forall t \in \R  \rbrace = \{ X: (\theta,y)\mapsto (v(y),0): v\in C^\infty(  M , \R)\}
\;  .  \end{equation*} 
\begin{equation*}  \sg_2= \sg_2 ( M)  := \lbrace X \in \diff_c^\infty ( \T \times M) : \Fl_X^t \in \G_2, \ \forall t \in \R  \rbrace = \{ X: (\theta,y)\mapsto (0, f(y)): f\in \diff_c^\infty ( M) \}
\;  .  \end{equation*} 

The  subgroups $\G_1$ and $\G_2$  are in $\P$, so:
\begin{equation} \label{sub12}
    \sg_1 \subset \spp \qand \sg_2 \subset \spp \; .  \end{equation}
\paragraph{Parametric counterpart.}\label{paracaseVect} 
We denote  $\diff^\infty_c(V)_\cP$  the subspace of  families $X_\cP=(X_p)_{p\in \cP}$ in $\diff^\infty_c(V)$ such that:
\begin{equation} \label{def hat X}
\widehat {X_\cP}  \colon (x,p)\mapsto (X_p(x),0)\end{equation} 
is smooth and compactly supported, that is, such that $\widehat {X_\cP}  \in \diff^\infty_{c}(V\times \cP)$. By \cref{morphism cP}, the space $\diff_c^\infty ( V )_\cP $ is a Lie algebra  endowed with the Lie bracket $[X_\cP,Y_\cP]:=([X_p,Y_p])_{p\in \cP}$.
\begin{definition}\label{d.diff.tang.space} The $ \cP$-families of vector fields whose flow is $ \cP$-pluggable is denoted: 
\begin{equation*}   \spp_\cP := \left\{  ( X _ p) _{ p \in \cP } \in \diff_c^\infty ( \T \times M) _{ \cP}  : ( \Fl_{X_p}^t )_{ p \in \cP} \in \P_\cP , \ \forall t \in \R \right\} \; . \end{equation*}  
\end{definition}
Using that $\P_\cP$ is a closed subgroup, the following is an immediate consequence of  \cref{coro lie 1} of \cref{a.lie}:
\begin{proposition} \label{p.sppclose.para}
The space $ \spp_\cP $ is a closed Lie subalgebra of $ \diff_c^\infty( \T \times M )_\cP$.  
\end{proposition}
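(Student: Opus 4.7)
The plan is to mimic verbatim the proof of the non-parametric Proposition p.sppclose, substituting the parametric general principle (Corollary coro lie 1) for the non-parametric one (Proposition prop lie 1). Two ingredients are needed: (a) that $\P_\cP$ is a closed subgroup of $\Diff_c^\infty(\T\times M)_\cP$, and (b) the abstract fact that for any closed subgroup $H$ of $\Diff_c^\infty(V)_\cP$, the set of $X_\cP \in \diff_c^\infty(V)_\cP$ whose flow $(\Fl_{X_p}^t)_{p\in\cP}$ lies in $H$ for every $t\in\R$ is a closed Lie subalgebra of $\diff_c^\infty(V)_\cP$.

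Ingredient (a) is already in hand: Proposition group cP gives that $\P_\cP$ is a subgroup under the coordinatewise composition $(F_p)_p\circ(G_p)_p=(F_p\circ G_p)_p$, and Proposition c.closure cP gives that it is closed in $\Diff_c^\infty(\T\times M)_\cP$. Ingredient (b) is exactly Corollary coro lie 1 in the appendix. If one prefers a self-contained reduction to the non-parametric Proposition prop lie 1, the key observation is that the embedding $\iota\colon X_\cP\mapsto\widehat{X_\cP}$ from \cref{def hat X} is a Lie algebra morphism onto a closed subspace of $\diff_c^\infty(\T\times M\times\cP)$, and, because $\widehat{X_\cP}$ is tangent to the slices $\T\times M\times\{p\}$, it carries flows to flows:
\begin{equation*}
\Fl^t_{\widehat{X_\cP}}=\widehat{(\Fl^t_{X_p})_{p\in\cP}}\; .
\end{equation*}
Combined with Fact morphism cP, this means that the condition ``flow lies in $H$ for every $t$'' pulls back through $\iota$ from the corresponding closed-subgroup condition on the ambient Fréchet Lie group $\Diff_c^\infty(\T\times M\times\cP)$, to which prop lie 1 applies directly.

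Putting (a) and (b) together, $\spp_\cP$ is by \cref{d.diff.tang.space} exactly the set of $X_\cP$ whose flow lies in the closed subgroup $\P_\cP$ for every $t\in\R$, so Corollary coro lie 1 concludes that $\spp_\cP$ is a closed Lie subalgebra of $\diff_c^\infty(\T\times M)_\cP$. The only non-formal inputs are the group and closure properties of $\P_\cP$ established earlier; the remainder is a direct translation of the non-parametric argument, and I do not anticipate any genuine obstacle beyond bookkeeping of the topology on $\diff_c^\infty(V)_\cP$ inherited from $\diff_c^\infty(V\times\cP)$.
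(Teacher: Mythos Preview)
Your proposal is correct and follows essentially the same approach as the paper: the paper's proof is a one-line application of Corollary~coro~lie~1 to the closed subgroup $\P_\cP$ (closedness and group structure coming from Propositions~c.closure~cP and~group~cP), which is exactly what you do. Your additional unpacking of how coro~lie~1 reduces to prop~lie~1 via the embedding $X_\cP\mapsto\widehat{X_\cP}$ also matches the paper's proof of that corollary.
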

Also note that the space $ \spp_\cP $ contains:
\begin{equation*}  \sg_{1\cP}  := \lbrace ( X_p)_{ p \in \cP} \in \diff_c^\infty ( \T \times M) _\cP :  X_p \in \sg_1 , \  \forall p \in \cP  \rbrace \end{equation*} 
and 
\begin{equation*}  \sg_{2\cP}  := \lbrace ( X_p)_{ p \in \cP} \in \diff_c^\infty ( \T \times M) _\cP :  X_p \in \sg_2 , \ \forall p \in \cP  \rbrace 
\;  .  \end{equation*} 
We define also:
\begin{equation*}  \widehat {\sg_{1\cP}} :=   \lbrace \widehat{X_\cP} : X_\cP  \in \sg_{1\cP} \rbrace \qand \widehat {\sg_{2\cP}} :=   \lbrace \widehat{X_\cP} : X_\cP  \in \sg_{2\cP} \rbrace \; .    \end{equation*}   
Observe that: 
\begin{equation*}   \widehat {\sg_{1\cP}}  = \lbrace ( \theta , y , p )  \mapsto ( \nu(y,p) , 0 , 0 )   : \nu \in  C^\infty_c ( M \times \cP, \R ) \rbrace  \;   \end{equation*} 
and
\begin{equation*}   \widehat {\sg_{2\cP}}  = \lbrace ( \theta , y , p )   \mapsto ( 0 , f( y,p)  , 0 )   : (f, 0)   \in \diff^\infty_c ( M \times \cP ) \rbrace  \; .   \end{equation*} 
Thus we have $ \widehat{\sg_{1\cP}} = \sg_1 ( M \times \cP )$ and $ \widehat{\sg_{2\cP}} \subsetneq \sg_2 ( M \times \cP )  $. 
\section{Construction of Pluggable flows} \label{sec coeur de la preuve constr of plugg vect}
In this section we show \cref{mainplug} by proving that any vector field has a pluggable flow. In order to do so we will show that the following subspace of $\diff^\infty_c( \T \times M) $ is  in $\spp$:
% \begin{equation*}  \sg_3 = \sg_3 ( M) :=\left\{ (\theta,y) \mapsto \sum_{1\le i \le N}  
% \phi_i ( \theta ) \cdot g_i (\theta , y): N\ge 1\;,\;  \phi_i\in  C^\infty ( \T , \R ) \qand   g_i \in \sg_2\right\}\; .\end{equation*} 
\[ \sg_3 := \sg_3 ( M) = \lbrace X \in  : \diff_c^\infty (  \T \times M ) :  X \text{ has null $ \T$-coordinate} \rbrace  \; . \]
% Note that every elements of $\sg_3$ is of the form:
% \begin{equation*} (\theta,y) \in \T \times M  \mapsto   \left(0, \sum_{1 \le i \le N } \phi_i ( \theta ) \cdot f_i(y)\right) \in \R \times TM  \; ,\quad \text{for } \phi_i \in C^\infty ( \T , \R )\qand  f_i\in \diff_c^\infty (  M) \; .\end{equation*}  
The non-trivial remaining part of the proof is to show the following:
\begin{proposition} \label{p.hatg2}
Any vector field  of $\sg_3  $ has pluggable flows: 
  \begin{equation*}  \sg_3 \subset  \spp \; . \end{equation*}  
\end{proposition}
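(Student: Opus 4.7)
My approach is to reduce the claim to vector fields of the special form $\phi(\theta)\cdot T$ with $T\in\sg_2$ and $\phi\in C^\infty(\T,\R)$ via Fourier expansion in the $\theta$-variable and the closure of $\spp$ (\cref{p.sppclose}), and then to handle this special case by combining Propositions~\ref{p.adjunctionstability} and~\ref{p.pre_deceig}. Concretely, for $X\in\sg_3$, write $X(\theta,y)=(0,Y(\theta,y))$ with $Y$ smooth and compactly supported in $\T\times(M\setminus\partial M)$, and Fourier-expand
\[ Y(\theta,y) \;=\; \sum_{n\in\Z} e^{2\pi i n\theta}\,Y_n(y), \]
where each $Y_n\in\diff^\infty_c(M)$ is supported in the $M$-projection of $\supp Y$. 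Taking real and imaginary parts, each term is of the form $\phi\cdot T$ with $\phi\in\{\cos(2\pi n\cdot),\sin(2\pi n\cdot)\}$ and $T=(0,Y_n)\in\sg_2$. The rapid decay of the Fourier coefficients together with the uniform compact support in $M$ ensure that the partial sums converge to $X$ in the Whitney topology of $\diff^\infty_c(\T\times M)$. Since $\spp$ is a closed Lie subalgebra, it will suffice to prove $\phi\cdot T\in\spp$ for any $\phi\in C^\infty(\T,\R)$ and any $T\in\sg_2$.

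For the special case, I would write $T=(0,f)$ with $f\in\diff^\infty_c(M)$ and apply \cref{p.pre_deceig} to obtain finite families $(X_i)_i,(Y_i)_i,(Z_i)_i$ in $\diff^\infty_c(M)$ with
\[ f \;=\; \sum_i [Y_i,Z_i] \qand Y_i \;=\; [X_i,Y_i]. \]
Lifting each $X_i,Y_i,Z_i$ trivially into $\sg_2\subset\spp$, and using that $\sg_2$-vector fields have no $\theta$-component while $\phi$ depends only on $\theta$, a direct coordinate computation of the Lie bracket yields the two key identities
\[ [\phi\cdot Y_i,\,Z_i] \;=\; \phi\cdot[Y_i,Z_i] \qand [X_i,\,\phi\cdot Y_i] \;=\; \phi\cdot[X_i,Y_i] \;=\; \phi\cdot Y_i. \]
The second identity, together with $X_i\in\sg_2\subset\spp$, places $\phi\cdot Y_i$ in the hypothesis of \cref{p.adjunctionstability}, so $\phi\cdot Y_i\in\spp$. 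Since $Z_i\in\sg_2\subset\spp$ and $\spp$ is a Lie subalgebra by \cref{p.sppclose}, the bracket $[\phi\cdot Y_i,Z_i]\in\spp$. Summing over $i$ yields
\[ \phi\cdot T \;=\; \sum_i [\phi\cdot Y_i,\,Z_i] \;\in\; \spp, \]
which concludes the special case and hence, by the Fourier reduction, the proposition.

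The main obstacle I expect is making the Fourier reduction fully rigorous for the Whitney $C^\infty$-topology on $\diff_c^\infty(\T\times M)$: one needs the partial sums to share a common compact support (which they do, since every $Y_n$ is supported in the $M$-projection of $\supp Y$) and one needs rapid decay of $\|Y_n\|_{C^k(M)}$ in $n$ that is uniform in $y$ over that compact support, so as to obtain convergence in every uniform $C^r$-norm. Once this analytic point is settled, the rest of the argument is purely algebraic: the two bracket identities are immediate from the formula $[A,B]_k=\sum_l(A^l\partial_{y^l}B^k-B^l\partial_{y^l}A^k)$ when $A^0=B^0=0$ and one of the factors depends only on $\theta$, and the remaining membership claims follow by invoking \cref{p.adjunctionstability,p.sppclose} and the inclusion $\sg_2\subset\spp$ from \eqref{sub12}.
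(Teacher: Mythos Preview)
Your proposal is correct and follows essentially the same approach as the paper: reduce via Fourier expansion in $\theta$ and the closure of $\spp$ to vector fields of the form $\phi(\theta)\cdot T$ with $T\in\sg_2$, then apply \cref{p.pre_deceig} (the paper phrases this through its \cref{p.deceig}, which is the same statement transported by the isomorphism $\diff_c^\infty(M)\cong\sg_2$) together with the bracket identities $[\phi\cdot Y_i,Z_i]=\phi\cdot[Y_i,Z_i]$ and $[X_i,\phi\cdot Y_i]=\phi\cdot Y_i$, and conclude via \cref{p.adjunctionstability} and the Lie algebra structure of $\spp$. Your discussion of the Whitney-topology convergence (common compact support plus rapid decay of the Fourier coefficients) is more explicit than the paper's one-line appeal to ``Fourier's theorem'', but the argument is the same.
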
 
The proof of this proposition will occupy \cref{Eigen,proof of deceig}.  In the next section we show that it implies main \cref{mainplug}. The proof of several propositions will involve the following notation. If $ \sg $ and $ \sh $  are two sub-Lie algebras, we denote $[ \sg , \sh  ]$ the vector space spanned by Lie brackets of elements of $\sg$ and $\sh$: 
\begin{equation*}  [ \sg , \sh  ]  := \left\{  \sum_{ \text{i:finite}  } [ X_i , Y_i ] : X_i \in \sg , \ Y_i \in \sh  \right\} \; . \end{equation*}  
 \paragraph{Parametric counterpart.} 
 Similarly we set:
\begin{equation*}  \sg_{3\cP}  := \lbrace ( X_p)_{ p \in \cP} \in \diff_c^\infty ( \T \times M) _\cP :  X_p \in \sg_3 , \  \forall p \in \cP  \rbrace \end{equation*} 
and 
\begin{equation*}  \widehat {\sg_{3\cP}}:=   \lbrace \widehat{X_\cP} : X_\cP  \in \sg_{3\cP} \rbrace \subsetneq \sg_3 ( M \times \cP)   \; .\end{equation*} 
 We will prove the following parametric counterpart of \cref{p.hatg2} in \cref{Eigen,proof of deceig}:
\begin{proposition} \label{p.hatg2.para}
Every smooth family of vector fields  in $ \sg_3$ has a $\cP$-pluggable flow: 
\begin{equation*}   \sg_{3\cP} \subset \spp_\cP \; .\end{equation*} 
\end{proposition}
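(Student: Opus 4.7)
The plan is to mimic the non-parametric proof of \cref{p.hatg2} at the level of $\cP$-families, using the parametric counterparts of the two key phenomena described in the introduction together with the closure property \cref{p.sppclose.para}.

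First I would establish the parametric adjunction stability: if $Y_\cP \in \diff_c^\infty(\T \times M)_\cP$ admits $X_\cP \in \spp_\cP$ with $Y_\cP = [X_\cP, Y_\cP]$, then $Y_\cP \in \spp_\cP$. The identity
\begin{equation*}
\Fl_{Y_p}^t = \Fl_{X_p}^s \circ \Fl_{e^{-s} Y_p}^t \circ \Fl_{X_p}^{-s}
\end{equation*}
holds pointwise in $p$, and \cref{l.neigh.of.Id cP} provides $\cP$-plugins arbitrarily close to identity with output $\Fl_{e^{-s}Y_\cP}^t$ for $s$ large. Two $\star$-products with $\cP$-plugins outputting $\Fl_{X_\cP}^{\pm s}$ (which exist by hypothesis) then deliver $\cP$-plugins close to the identity with output $\Fl_{Y_\cP}^t$.

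Next I would establish the parametric decomposition lemma: for every $T_\cP \in \diff_c^\infty(M)_\cP$, there exist finite $\cP$-families $(X_{i\cP}), (Y_{i\cP}), (Z_{i\cP})$ in $\diff_c^\infty(M)_\cP$ such that
\begin{equation*}
T_\cP = \sum_i [Y_{i\cP}, Z_{i\cP}] \qand Y_{i\cP} = [X_{i\cP}, Y_{i\cP}].
\end{equation*}
The explicit one-dimensional model ($M=\R$, $X = \partial_y$, $Y = y\,\partial_y$, $Z_p(y) = \int_{-\infty}^y T_p(t)\,dt$) depends smoothly on $p$; the compactly supported parametric variant, its assembly on $M=\R^n$, and the partition-of-unity patching for a general $M$ all carry $p$ as a passive parameter. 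The corresponding statement for $\sg_{2\cP}$ follows by attaching a null $\theta$-component.

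Combining the two ingredients, for any $T_\cP \in \sg_{2\cP}$ and any $\phi \in C_c^\infty(\T)$ one writes
\begin{equation*}
\phi \cdot T_\cP = \sum_i [\phi \cdot Y_{i\cP}, Z_{i\cP}],
\end{equation*}
using that $Z_{i\cP}$ has null $\theta$-coordinate and $\phi$ depends only on $\theta$. The same observation gives $[X_{i\cP}, \phi \cdot Y_{i\cP}] = \phi \cdot Y_{i\cP}$, so Step~1 yields $\phi \cdot Y_{i\cP} \in \spp_\cP$, hence $\phi \cdot T_\cP \in \spp_\cP$ by the Lie algebra property of $\spp_\cP$. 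Finally, any $X_\cP \in \sg_{3\cP}$ has the form $X_p(\theta, y) = (0, Z_p(\theta, y))$ with $Z$ smooth and compactly supported in $(\theta, y, p)$; its $\theta$-Fourier expansion
\begin{equation*}
Z_p(\theta, y) = \sum_n \phi_n(\theta) \cdot T_{n,p}(y), \qquad \phi_n \in \{\cos(2\pi n \cdot), \sin(2\pi n \cdot)\},\ T_{n,\cP} \in \sg_{2\cP},
\end{equation*}
has partial sums lying in $\spp_\cP$ and converging in $C^\infty$ to $X_\cP$, so $X_\cP \in \spp_\cP$ by \cref{p.sppclose.para}. The main obstacle I expect is Step~2: arranging the anti-derivative construction and the subsequent partition-of-unity assembly on a general $M$ to produce \emph{exact} identities $Y_{i\cP} = [X_{i\cP}, Y_{i\cP}]$ with smooth dependence on $p$; once this is in place, the Fourier-closure conclusion is routine.
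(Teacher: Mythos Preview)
Your proposal follows essentially the same route as the paper: establish the parametric adjunction stability (the paper's \cref{p.adjunctionstabilitycP}), the parametric decomposition \cref{p.deceigcP}/\cref{c.deceigcP}, combine them via $\phi\cdot T_\cP=\sum_i[\phi\cdot Y_{i\cP},Z_{i\cP}]$ and $[X_{i\cP},\phi\cdot Y_{i\cP}]=\phi\cdot Y_{i\cP}$, then close under Fourier approximation using \cref{p.sppclose.para}. One small slip: in your one-dimensional model you have $X$ and $Y$ interchanged---with $X=\partial_y$ and $Y=y\,\partial_y$ one gets $[X,Y]=X$, not $Y$; the paper takes $Y=\partial_y$, $X=-y\,\partial_y$, $Z=\int T$, which gives the required $Y=[X,Y]$ and $T=[Y,Z]$, and then uses an explicit diffeomorphism $\psi$ of $(-a,a)$ onto $\R$ to make $X,Y,Z$ compactly supported while preserving both identities (this is exactly the obstacle you anticipated in Step~2).
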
 
\subsection{Proof of main \cref{mainplug} and \cref{mainplug cP} } \label{subsec proof of main}
The first step of the proof is the following:
\begin{proposition} \label{p.mainspp}
Any vector field has a pluggable flow:  
 \begin{equation*}  \spp = \diff_{c}^\infty ( \T  \times M) \; . \end{equation*}  
\end{proposition}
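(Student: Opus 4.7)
The strategy is to decompose any $X \in \diff_c^\infty(\T \times M)$ as
\[
X = X^\theta + X^M, \qquad X^\theta = a(\theta,y)\,\partial_\theta, \qquad X^M = (0, b(\theta,y)),
\]
where $a \in C_c^\infty(\T \times M)$ and $b$ is a compactly supported section of $TM$ along $\T \times M$. The $M$-component $X^M$ is a vector field with null $\T$-coordinate, hence $X^M \in \sg_3$; by Proposition \ref{p.hatg2} it lies in $\spp$. So the task reduces to showing that every field of the form $X^\theta = a(\theta,y)\,\partial_\theta$ is pluggable. This is the only non-trivial point, because $\sg_1 \subset \spp$ gives only the $\theta$-fields with $\theta$-independent coefficient $\xi(y)$.

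The core local identity is the following. For $\xi \in C_c^\infty(M,\R)$ and a field $Y = \phi\,\partial_{y_i} \in \sg_3$ written in local coordinates on $M$, a direct computation gives
\[
[\xi\,\partial_\theta,\ \phi\,\partial_{y_i}] \;=\; \xi\,(\partial_\theta \phi)\,\partial_{y_i} \;-\; \phi\,(\partial_{y_i}\xi)\,\partial_\theta.
\]
Since $\xi\,\partial_\theta \in \sg_1 \subset \spp$, $\phi\,\partial_{y_i} \in \sg_3 \subset \spp$, and $\spp$ is a Lie subalgebra (Proposition \ref{p.sppclose}), the bracket lies in $\spp$. The first summand $\xi(\partial_\theta \phi)\,\partial_{y_i}$ belongs to $\sg_3 \subset \spp$, so subtracting it yields
\[
\phi\,(\partial_{y_i}\xi)\,\partial_\theta \;\in\; \spp.
\]
Thus any $\theta$-only field of the form $\phi\,(\partial_{y_i}\xi)\,\partial_\theta$ is pluggable.

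To realize $a(\theta,y)\,\partial_\theta$ for arbitrary $a \in C_c^\infty(\T \times M)$, cover the compact set $\pi_M(\supp a) \subset M \setminus \partial M$ by finitely many relatively compact coordinate charts $(U_\alpha,(y_1^\alpha,\dots,y_n^\alpha))$, and pick a subordinate partition of unity $(\chi_\alpha)$. For each $\alpha$, choose $\xi_\alpha \in C_c^\infty(M,\R)$ that coincides with $y_1^\alpha$ on a neighborhood of $\overline{U_\alpha}$, so $\partial_{y_1^\alpha}\xi_\alpha \equiv 1$ on $\supp(\chi_\alpha\,a)$. Applying the identity above with $\phi = \chi_\alpha\, a$ and $i = 1$ gives $\chi_\alpha\, a\,\partial_\theta \in \spp$ for every $\alpha$. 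Summing,
\[
X^\theta \;=\; \sum_\alpha \chi_\alpha\, a\,\partial_\theta \;\in\; \spp,
\]
and combining with $X^M \in \spp$ yields $X \in \spp$, proving $\spp = \diff_c^\infty(\T \times M)$.

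The parametric counterpart \cref{p.mainspp}$'$ (needed for \cref{mainplug cP}) follows verbatim: we use $\sg_{1\cP},\sg_{3\cP} \subset \spp_\cP$ (which is again a closed Lie subalgebra by Proposition \ref{p.sppclose.para}) and apply the same bracket identity pointwise in $p$. The coordinate charts and partition of unity may be chosen in $M$ alone, since the $M$-projection of the compact support of $X_\cP$ in $\T\times M\times \cP$ is compact. The principal conceptual obstacle was producing any $\theta$-dependent coefficient in the $\partial_\theta$ direction; it is precisely the bracket with a field of $\sg_3$ (given to us by Proposition \ref{p.hatg2}) that generates such coefficients, while the unwanted tangential remainder is killed by subtracting off a field that already lies in $\sg_3 \subset \spp$.
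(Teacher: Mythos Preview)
Your proof is correct and follows essentially the same route as the paper. Both arguments reduce to showing $[\sg_1,\sg_3]+\sg_3=\diff_c^\infty(\T\times M)$ via the bracket of a $\sg_1$-field with coefficient locally equal to a coordinate $y_1$ against a $\sg_3$-field carrying the desired $\T$-coefficient, followed by a partition of unity; the only differences are organizational (you split off $X^M$ first and are more explicit about the cutoff $\xi_\alpha$, while the paper packages the same computation as $X-[W,Y]\in\sg_3$).
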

\begin{proof}
As $ \spp $ is a Lie algebra by \cref{p.sppclose} and since $ \sg_1 $ and $ \sg_3$ are in $ \spp $ by \cref{sub12} and \cref{p.hatg2}, it suffices to show that the Lie algebra generated by $\sg_1$ and $\sg_3$ equals $\diff_{c}^\infty ( \T  \times M)$. 
We first prove the statement of the proposition for $M=\R^n$.  
Let $ {X \in \diff_c^\infty ( \T \times M)} $ and  $f\in C^\infty_{c}(\T\times \R^n, \R)$ be its $ \T$-coordinate. Let $W \in \sg_3$ be: 
\begin{equation} \label{Xmainspp}
    W :=(0,f,0,\ldots,0) \; .
\end{equation} 
Let $\rho\in C^\infty(\T\times M,\R)$ be a compactly supported function which is equal to $1$ near the support of $f$   and does not depend on the $\T$ coordinate  . Let $ Y \in \sg_1 $ be:  
\begin{equation} \label{Ymainspp}
    Y(\theta,y_1,\ldots, y_n)=(\rho(\theta, y)\cdot y_1,0,\ldots,0) \; . \end{equation}
A computation gives a Lie bracket of the form:
\begin{equation} \label{crochetmainspp}
    [W,Y](\theta,y)=(f(\theta,y),- \partial_{\theta} f(\theta,y) \cdot y_1,0, \ldots, 0) \; .
\end{equation}
Thus $ X - [ W, Y] $ is in $ \sg_3 $  which gives the desired result for $M=\R^n$. 

When $M$ is another manifold, we fix a locally finite covering $(U_i)_i$  by balls $U_i$. Using a partition of the unity, every $X\in \diff_{c}^\infty ( \T  \times M)$ can be written as a sum:
\begin{equation} \label{partitionmainspp}
    X= \sum_i X_i \; . 
\end{equation}
where each $X_i$ is supported by $U_i$. As $X$ is compactly supported, the $X_i$ are almost all null and thus the above sum is finite.  As each $U_i$ is diffeomorphic to $\R^n$, we can apply the case $M=\R^n$ which gives $Y_i,Z_i\in \sg_3$ and $W_i\in \sg_1$  all supported by $ \T \times U_i $ such that $[W_i,Y_i]+Z_i=X_i$. We conclude by summing over $i$.  
\end{proof}
 
\begin{remark}\label{rema:mainspp} We proved that   $\left[\sg_3  ,\sg_1\right]  +\sg_3  =\diff_c^\infty(\T\times M)$. \end{remark} 
We now have the tools to show: 

\begin{proof}[Proof of \cref{mainplug}]
Let $G\in \Diff_c^\infty ( \T \times M ) $ and let $(G_t)_{ t \in [ 0,1] } $ be a compactly supported smooth path from $ G_0 = \id$ to $G_1 = G$ in $\Diff_c^\infty(\T \times M)$.   %Let $X = ( \partial_t G_t)_{ t \in [ 0,1]} $.
Derivatives $ X_t := \partial_t G_t \circ G_t^{-1}$  define a smooth family $X = ( X_t)_t$ of vector fields all supported in a compact subset $K\subset V\setminus \partial V$.  In particular,   the time $1/N$-map $ F_i$ of the vector field $X_{i/N}$ is supported by $K$, and likewise for $F=  F_{N-1}\circ \cdots \circ  F_0$. 
By definition of the Whitney topology, it suffices to show that for every $r\ge 1$, when $N$ is large,  the map  $F$ is $C^r$-close to $G$  to obtain that $F$ is close to $G$ in $\Diff_c^\infty(\T \times M)$. 

Note that indeed each $ F_i$ is $O_{C^r}(1/N^2)$-close to $\tilde F_{i}:=G_{(i+1)/N }\circ G_{i/N} ^{-1}$ and so it holds: 
\begin{equation} G=  \tilde F_{N-1}\circ \cdots \circ  \tilde F_0 =  F_{N-1}\circ \cdots \circ F_0+O_{C^r} (1/N)=  F + O_{C^r} (1/N) \; .\end{equation} 
As each $ F_{i}$ belongs to $\P $, and since $\P$ is a closed group by \cref{c.closure cP}, it comes that
$F$  belongs to $\P$ and 
 its limit $G$ when $ N  \to \infty$ as well. 
\end{proof}

\paragraph{Parametric counterpart.} To prove  \cref{mainplug cP} we will use  the following parametric counterpart of \cref{p.mainspp}: 
\begin{proposition} \label{p.mainsppcP}
The flow of every $\cP$-family of vector fields is  $ \cP$-pluggable: 
 \begin{equation}    \spp_\cP = \diff_c^\infty ( \T \times M ) _\cP \; . \end{equation}  
\end{proposition}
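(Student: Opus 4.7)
The plan is to mirror the proof of \cref{p.mainspp} in the parametric setting, carrying the parameter $p$ along in every explicit construction. The key observation is that the original argument uses only explicit formulas (cutoffs, Lie brackets, partitions of unity), all of which preserve smooth dependence on $p$. First I would invoke \cref{p.sppclose.para} to note that $\spp_\cP$ is a closed Lie subalgebra of $\diff_c^\infty(\T\times M)_\cP$; by \cref{twist cP} it contains $\sg_{1\cP}$, and by the (assumed) \cref{p.hatg2.para} it contains $\sg_{3\cP}$. Arguing as in \cref{rema:mainspp}, it then suffices to establish the stronger identity
\begin{equation*}
[\sg_{3\cP}, \sg_{1\cP}] + \sg_{3\cP} = \diff_c^\infty(\T\times M)_\cP.
\end{equation*}

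For the case $M = \R^n$, given $X_\cP = (X_p)_{p\in\cP} \in \diff_c^\infty(\T\times M)_\cP$, I would write $X_p = (f_p, V_p)$ with $f_p$ the $\T$-component. The assumption that $\widehat{X_\cP}$ is smooth and compactly supported on $\T\times M\times\cP$ ensures that $(p,\theta,y) \mapsto f_p(\theta,y)$ is smooth with compact support; in particular the projection $K$ of $\supp \widehat{X_\cP}$ onto $\T\times M$ is compact. I would then pick a $p$-independent cutoff $\rho \in C_c^\infty(\T\times M, \R)$ equal to $1$ on a neighborhood of $K$, and set, in analogy with \eqref{Xmainspp}--\eqref{Ymainspp},
\begin{equation*}
W_p := (0, f_p, 0, \ldots, 0), \qquad Y_p(\theta, y_1, \ldots, y_n) := (\rho(\theta, y)\, y_1, 0, \ldots, 0).
\end{equation*}
Then $W_\cP := (W_p)_p \in \sg_{3\cP}$ and $Y_\cP := (Y_p)_p \in \sg_{1\cP}$ (with $Y_p$ independent of $p$), and the fiberwise bracket computation \eqref{crochetmainspp} yields $X_\cP - [W_\cP, Y_\cP] \in \sg_{3\cP}$, which closes the Euclidean case.

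For general $M$, I would reduce to the Euclidean case exactly as in \cref{p.mainspp}: fix a locally finite cover $(U_i)$ of $M$ by charts diffeomorphic to $\R^n$ together with a subordinate partition of unity, and decompose $X_\cP = \sum_i X_{i\cP}$ with each $X_{i\cP}$ supported in $\T\times U_i$. Only finitely many summands are nonzero, thanks to compactness of $\supp\widehat{X_\cP}$ in $\T\times M\times\cP$, and applying the Euclidean step to each piece concludes. The only genuine parametric subtlety is the requirement that the cutoff $\rho$ and the field $Y_\cP$ be $p$-independent (so as to land in $\sg_{1\cP}$), which is precisely why I would extract $\rho$ from the compact set $K$ above; all remaining verifications are verbatim transcriptions of the non-parametric argument.
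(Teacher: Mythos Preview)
Your proof is correct and mirrors the paper's own argument essentially verbatim: reduce to $[\sg_{3\cP},\sg_{1\cP}]+\sg_{3\cP}=\diff_c^\infty(\T\times M)_\cP$, treat $M=\R^n$ with the explicit fields $W_\cP,Y_\cP$ built from a $p$-independent cutoff, then globalize via a partition of unity. One minor point (shared with the paper's non-parametric formula \eqref{Ymainspp}): for $Y_\cP$ to genuinely lie in $\sg_{1\cP}$ the cutoff $\rho$ must be independent of $\theta$ as well as of $p$, which is easily arranged since $\T$ is compact and you may take $\rho\in C_c^\infty(M,\R)$ equal to $1$ on the $M$-projection of $K$.
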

\begin{proof}
As in the proof of \cref{p.mainspp}, it suffices to show that the Lie algebra generated by $ \sg_{1\cP}$ and $   \sg_{3\cP}    $  equals $\spp_\cP$.
We start with the case $ M = \R^n $ and $ \cP = \R^d$. 
Let $ X_\cP = ( X_p ) _{ p \in \cP} $ be a $\cP$-family  in $ \diff_c^\infty ( \T \times M ) _\cP $ and $f _p \in \diff_c( \T \times M )  $  be the $\T$-coordinate of $ X_p$ for each $p \in \cP$. Let $W_\cP \in \sg_{3\cP} $ be:
\[  W_\cP :=  (( 0, f_p  , 0 , \dots , 0))_{p \in \cP}  \; .  \]  
With $Y$ as defined in \cref{Ymainspp}, it holds by \cref{crochetmainspp}: 
\[ [ W_p  , Y  ] ( \theta , y)  = ( f_p( \theta , y)  , - \partial_{\theta} f_p ( \theta , y)  \cdot y_1 , 0 , \dots , 0 ) )_{ p \in \cP}  \; .      \] 
We set $ Y_\cP := ( Y )_{p \in \cP  } \sg_{1\cP} $. The latter computation gives:
\begin{equation} [ W_\cP , Y_\cP ] ( \theta , y) =  (( f_p ( \theta ,y)  , - \partial_{\theta} f_p (\theta,y)  \cdot y_1  , 0 , \dots , 0 ) ) _{ p \in \cP}  \; . \end{equation}
Thus $X_\cP -  [ W_\cP,  Y_\cP ]  $ is in $ \sg_{3\cP }$. 
Now for the general case we conclude by using a partition of unity as before. 
\end{proof}
It allows to conclude: 
\begin{proof}[Proof of \cref{mainplug cP}]
Consider $  G_\cP  \in  \diff_c^\infty ( \T \times M)_\cP $. As a corollary of \cref{mainplug}, for any $r \ge 0$, the map $ \widehat{ G_\cP}  \in \Diff_c^\infty ( \T \times M \times \cP)  $ can be approximated by a composition of flows of vector fields with a zero $ \cP$-coordinate.
Thus the family $ G_\cP$ can be approximated arbitrarily close by a composition of compactly supported families of flows of vector fields in the $ C^r$-norm for any $r \ge 0$. By  \cref{p.mainspp}, each of the families of flows belongs to $\P_\cP$. Since $\P_\cP$ is a  group by \cref{c.closure cP} the above composition belongs to $ \P_\cP $ as well and  it comes that $(G_p)_{ p \in \cP}$ is in $\P_\cP$ by closedness of $ \P_\cP$. \end{proof}
\subsection{Eigenvectors of the adjoint representation}\label{Eigen}
\label{section proof of mainplug}

Observe that  the Lie algebra generated by $\mathfrak{g}_1$ and $\mathfrak{g}_2$  contains the vector fields that do not depend on the $ \T$-coordinate.  To obtain more elements in $ \spp$ we study the eigenvectors of operators of the form: 
\begin{equation*} \ad_X: Y\in \diff_c^\infty ( \T \times M) \mapsto [ X, Y ] \in \diff_c^\infty ( \T \times M)\; ,\end{equation*} 
for $X\in \diff_c^\infty ( \T \times M)$. 

In this subsection, we show that if  $Y$ is an eigenvector of $\ad_X$ for $X\in \spp$  then  $Y$ is in $\spp$ (see \cref{p.adjunctionstability}). This will enable us to prove \cref{p.hatg2} in \cref{proof of deceig}. 
\begin{definition}
Let $V$ be a manifold and let $ \sg   $  be a subspace of  $ \diff_c^\infty ( V ) $. We denote:
\begin{equation*}  \seig (  \ad_\sg ):= \lbrace  Y  \in \diff_c^\infty ( V ) : \exists X \in \sg \text{ such that } Y = [ X, Y ]    \rbrace    \; .  \end{equation*}  
\end{definition}
The following is the key proposition enabling to construct new examples of pluggable flows: 
\begin{proposition}\label{p.adjunctionstability}If  $Y\in \diff_c^\infty ( \T \times M)$ satisfies $\ad_X Y=Y$  for $X\in \spp$  then  $Y$ is in $\spp$:   
\begin{equation*}  \seig ( \ad_\spp ) \subset  \spp \; .   \end{equation*}  
\end{proposition}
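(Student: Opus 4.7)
My plan is to use the relation $[X,Y]=Y$ to exactly conjugate the flow $\Fl_Y^t$ to the flow of a rescaled vector field $e^{-s}Y$ of arbitrarily small size, and then to exploit pluggability of the resulting near-identity factor together with the group structure of $\P$ established in \cref{group}.

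The key algebraic step is as follows. Since $(\Fl_X^s)_*X=X$ and push-forward commutes with the Lie bracket, the family $Y_s:=(\Fl_X^s)_*Y$ solves the linear ODE $\frac{d}{ds}Y_s=-[X,Y_s]=-Y_s$ with $Y_0=Y$, so $Y_s=e^{-s}Y$. Applying the identity $\Fl_{\psi_*V}^t=\psi\circ\Fl_V^t\circ\psi^{-1}$ with $\psi=\Fl_X^s$ and $V=Y$ yields the conjugacy
\begin{equation*}
\Fl_Y^t=\Fl_X^{-s}\circ\Fl_{e^{-s}Y}^t\circ\Fl_X^{s}\qquad\text{for all }s,t\in\R.
\end{equation*}
Since $e^{-s}Y\to 0$ in $\diff^\infty_c(\T\times M)$ as $s\to+\infty$, the middle factor $\Fl_{e^{-s}Y}^t$ tends to $\id$ in $\Diff^\infty(\T\times M)$, uniformly on compact intervals of $t$.

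Fix $t\in\R$ and a neighborhood $\cN$ of $\id$ in $\Diff^\infty(\T\times M)$, and let $\cN_c$ be the neighborhood produced by \cref{l.neigh.of.Id}. For $s$ large enough, both $\Fl_{e^{-s}Y}^t$ and $\Fl_{e^{-s}Y}^{-t}$ lie in $\cN_c$, so \cref{l.neigh.of.Id} produces, for every sufficiently large $k$, plugins in $\cN$ of step $2^{-k}$ with outputs $\Fl_{e^{-s}Y}^t$ and $\Fl_{e^{-s}Y}^{-t}$ respectively. This makes $\Fl_{e^{-s}Y}^t$ pluggable. Since $X\in\spp$ the maps $\Fl_X^{\pm s}$ are also pluggable, and \cref{group} then gives $\Fl_Y^t=\Fl_X^{-s}\circ\Fl_{e^{-s}Y}^t\circ\Fl_X^{s}\in\P$. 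As $t$ was arbitrary, $Y\in\spp$. The only nontrivial ingredient is \cref{l.neigh.of.Id}, which has already been proved, so I do not foresee any substantial obstacle; the argument equivalently carries through by producing plugins of a common step for all three factors and iterating the $\star$-product from \cref{group section}, as suggested in the introduction.
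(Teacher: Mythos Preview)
Your proof is correct and follows essentially the same route as the paper's: you establish the conjugacy $\Fl_Y^t=\Fl_X^{-s}\circ\Fl_{e^{-s}Y}^t\circ\Fl_X^{s}$ from the eigenvector relation (the paper isolates this as a separate lemma), then invoke \cref{l.neigh.of.Id} for the middle factor and pluggability of $\Fl_X^{\pm s}$ for the outer ones. The only cosmetic difference is that you conclude via the group property of $\P$ (\cref{group}), whereas the paper explicitly builds the plugin $(h\star g)\star f$; you already note this equivalence in your final sentence.
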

\begin{proof}
   Given a map $f \in \Diff^\infty_c ( M) $ and a vector field $ W  \in \diff_c^\infty ( M ) $ we denote the pushforward of $W$ by $f$ as:
\begin{equation*}
    \Ad_f W := f_* W := Df \circ W \circ f^{-1} \;. 
\end{equation*}
This notation is consistent with the usual composition rules given by the following commuting diagram:
\begin{equation*}  
\begin{array}{rcccl} 
	&					&{W}		&					&\\
	&M	&\longrightarrow	&TM&\\
f	&\downarrow			&				&\downarrow			&Df\\
	&M &\longrightarrow	&TM&\\
	&					&{\Ad_f W}		&					&\\
\end{array} \; . \end{equation*} 
The following contains a  key idea for the proof of the main theorem: 
\begin{lemma} \label{horousefull}If $Y$ satisfies $[X,Y]=Y$, then for every $s,t  \in \R$  it holds:
\begin{equation}  \Fl^t_Y  = \Fl_X^{-s}  \circ  \Fl_Y^{ t \cdot e^{-s}  }  \circ \Fl_X^{s} \; .  \end{equation} 
\end{lemma}
\begin{proof}
We recall the following well known result on adjunction of vector fields by flows, see e.g. \cite[Prop 1.9]{kobayashi1963foundations}:
\begin{fact}
Let $ U,W \in \diff_c^\infty ( M) $ be two vector fields then it holds: 
\begin{equation*}
    \partial_t \Ad_{\Fl_U^t} W_{|t=0} = - [ U, W ] \;. 
\end{equation*}
\end{fact}
First observe that by the latter fact it holds  $ X = \Ad_{\Fl_X^{s}}( X)$ for every $ s \in \R$. 
Let then  $Y_s:= \Ad_{\Fl_X^{s}}( Y) = D \Fl_X^{s} \circ Y \circ \Fl_X^{-s}$.
Observe that since $ Y = [ X, Y]$, it holds:
\begin{equation}  [ X, Y_s ] = [ \Ad_{ \Fl_X^s} ( X ) , \Ad_{ \Fl_X^s} ( Y  ) ] = \Ad_{ \Fl_X^s} ( [X,Y] ) = \Ad_{ \Fl_X^s} ( Y  ) = Y_s \; .   \end{equation} 
Also for every $ s \in \R$, we have:
\begin{equation} \partial_s Y_s = \partial_t (Y _{s+t} ) _{|t=0}=
\partial_t \Ad_{\Fl_X^{t}}( Y_s)_{|t=0}  \; .\end{equation}
Thus by the latter fact it follows: 
\begin{equation}  \partial_s Y_s = - [X, Y_s ] = -Y_s \; . \end{equation} 
Consequently $ \partial_s Y_s =  - Y_s $ and thus $  e^{-s} \cdot Y =  Y_s = \Ad_{\Fl_X^{s}}(  Y)  $. After integration between $0$ and $t$, we obtain:
\begin{equation}
    \Fl_{ e^{-s} \cdot Y}^t = \Fl_X^s \circ \Fl_Y^t \circ \Fl_X^{-s} \; .  
\end{equation}
As $\Fl_{ e^{-s} \cdot Y}^t  = \Fl_{Y}^{t \cdot e^{-s}}   $ we obtain the desired result by composing the latter equation on the right by $\Fl_X^s $ and on the left by $\Fl_X^{-s}$.  
\end{proof} 
We can now prove \cref{p.adjunctionstability}.
Fix a neighborhood $\cN  $ of $\id \in \Diff^\infty ( \T \times M ) $ and let us show the existence of a plugin in $\cN$ whose output is $\Fl^t _Y$. Let $\cN_c $ be the neighborhood of $\id  \in \Diff_c^\infty ( \T \times M )$  given by \cref{l.neigh.of.Id}. Let   $s \in \R $ be sufficiently large such that $\Fl^{e^{-s}\cdot t}_Y$ belongs to the neighborhood $\cN_c $ of the identity. Then for any $k\geq 1$ large enough $\mathrm{Fl}^{e^{-s}\cdot t}_Y$ is the output of a plugin $g\in \cN$ of step $2^{-k}$. Since $ X \in \spp$, it holds $\Fl_X^{ s}\in \P$ so for any $k\geq 1$ large enough $\Fl_X^{ s}$ and $\Fl_X^{-s}$ are  the outputs of plugins $h$ and $f$ in $\cN$ of respective steps $2^{-k}$ and $2^{-k-1}$. So for any large $k$ the map $\mathrm{Fl}^{t}_{Y}$ is the output of the plugin $(h\star g)\star f$ with step $2^{-k-2}$.  Since this holds for any neighbourhood $\cN$ of the identity the plugin $(h\star g)\star f$  can be taken arbitrarily close to identity. Therefore  for every $t$, there exists a plugin with output   $\Fl_Y^t $   arbitrarily close to identity for any small enough step. Hence    $ Y $   is in $\spp$.  
\end{proof}    
  
A second main ingredient of the proof of the main theorem is the following of independent interest:
\begin{proposition}\label{p.pre_deceig} For every $T\in \diff^\infty_c(V)$, there exist finite families $(X_i)_i,(Y_i)_i,(Z_i)_i$ of vector fields in $  \diff^\infty_c(V)$ such that: 
\begin{equation*}  T=\sum_i [Y_i,Z_i]\qand Y_i=[X_i,Y_i]\; .\end{equation*} 
\end{proposition}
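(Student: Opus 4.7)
My plan is to prove the proposition in three stages: first establish a non-compactly supported model identity on $\R$, then promote it to a compactly supported version using an extra parameter direction, and finally reduce the general manifold case to the Euclidean one via a partition of unity.

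For the model, take on $\R$ the vector fields $X:=-y\,\partial_y$ and $Y:=\partial_y$. Using the sign convention $[X,Y]=DY\cdot X-DX\cdot Y$ fixed in \cref{subsec debut vector}, one computes $[X,Y]=0\cdot(-y)-(-1)\cdot 1=1=Y$, so that $Y=[X,Y]$. For any $T=t(y)\,\partial_y$, setting $Z:=f(y)\,\partial_y$ with $f(y):=\int_{-\infty}^{y} t(s)\,ds$ gives $[Y,Z]=f'(y)\,\partial_y=T$. This already realises $T=[Y,Z]$ with the eigenvector relation $Y=[X,Y]$, but none of $X,Y,Z$ is compactly supported.

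To obtain a compactly supported version I would work parametrically on $\R\times P$ for $\dim P\geq 1$. Fix $\rho\in C_c^\infty(\R,[0,1])$ equal to $1$ on a neighbourhood of all $y$-supports that will arise and $\sigma\in C_c^\infty(P,[0,1])$ similarly adapted, and set $Y_1:=\rho(y)\sigma(p)\,\partial_y$. A compactly supported $X_1$ satisfying $[X_1,Y_1]=Y_1$ can be produced by taking a $-y$-like $\partial_y$-component and completing it with a $\partial_p$-component whose $\partial_y$-derivative cancels the boundary contributions; this step is essential because in one dimension the ODE $ab'-a'b=b$ extracted from $[X,Y]=Y$ forces compactly supported solutions to be trivial, so the $\partial_p$-freedom is not optional. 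A compactly supported $Z_1$ realising $[Y_1,Z_1]=\rho(y)\sigma(p)\,T$ is then obtained from a truncated antiderivative in $y$ of the family $t(\cdot,p)$; the error produced by the truncation is compactly supported and of simpler shape, supported in a thinner transition strip, so finitely many correction brackets of the same form suffice to complete the decomposition. Since $\R^n\cong \R\times\R^{n-1}$ for $n\geq 2$, applying this to each component $t_i\,\partial_{y_i}$ of $T\in\diff^\infty_c(\R^n)$, with the remaining coordinates playing the role of $p$, establishes the proposition on Euclidean spaces.

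Finally, for an arbitrary manifold $V$ one chooses a locally finite cover of $\supp T$ by charts diffeomorphic to $\R^n$ together with a subordinate partition of unity $(\psi_\alpha)_\alpha$, writes $T=\sum_\alpha \psi_\alpha T$, and applies the Euclidean case to each locally supported summand; concatenating the finite families produced in each chart yields the desired global decomposition. The main obstacle is the compactly supported parametric step: the $\partial_p$-correction in $X_1$ must be chosen so that $[X_1,Y_1]=Y_1$ holds exactly, and the iteration controlling the truncation error in $Z_1$ must terminate after finitely many steps, which is the real technical heart of the proof.
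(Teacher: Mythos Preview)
Your central claim---that in one dimension the equation $[X,Y]=Y$ has no nontrivial compactly supported solutions---is false, and this error derails the entire strategy. The paper constructs such solutions explicitly: take any diffeomorphism $\psi\colon(-a,a)\to\R$ that blows up at the endpoints fast enough (the paper uses $\psi(y)=y\cdot e^{e^{(a^2-y^2)^{-1}}}$), and pull back the non-compact model $X=-y\,\partial_y$, $Y=\partial_y$. The pullbacks $\tilde X=\psi^*X=-\psi/\psi'$ and $\tilde Y=\psi^*Y=1/\psi'$ vanish to infinite order at $\pm a$, hence extend by zero to compactly supported vector fields on $\R$, and $[\tilde X,\tilde Y]=\tilde Y$ holds because pullback preserves brackets. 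The same pullback sends $Z=\int T$ to a compactly supported $\tilde Z$ with $[\tilde Y,\tilde Z]=\tilde T$. This settles the case $V=\R$ directly, with a single bracket and no extra parameter direction.

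Because your proposal rests on the nonexistence of such solutions, it cannot handle $V=\R$ at all (the proposition is stated for arbitrary $V$, and the one-dimensional case is genuinely needed for the applications when $\dim M=1$). Even in higher dimensions your scheme is incomplete: you fix $Y_1=\rho(y)\sigma(p)\,\partial_y$ and then must manufacture a compactly supported $X_1$ with $[X_1,Y_1]=Y_1$ exactly---but you give no construction, only the assertion that a $\partial_p$-component can be arranged to ``cancel boundary contributions''. Likewise the claim that truncation errors in $Z_1$ can be absorbed by ``finitely many correction brackets of the same form'' is not justified. The paper avoids all of this: once the $1$-dimensional case is done (with its parametric version obtained by the same pullback, since $\psi$, $\tilde X$, $\tilde Y$ depend only on the interval and not on $T$), the $\R^n$ case follows by treating the remaining coordinates as parameters, and the general manifold case follows by partition of unity exactly as you outline. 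The missing idea is the conjugation trick that transports the affine model into a compact interval.
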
 
The proof of this proposition and its parametric counterpart will occupy the full  \cref{{proof of deceig}}. 
Now note that the following is an isomorphism of Lie algebras:
\begin{equation*}   \si_2 : X \in \diff_c^\infty ( M ) \mapsto ( 0 , X) \in \sg_2 \; .  \end{equation*} 
Thus, by applying this isomorphism to the image of sets involved in the statement of  \cref{p.pre_deceig} for every $T\in \sg_2$, there exist finite families $(X_i)_i,(Y_i)_i,(Z_i)_i$ of vector fields in $\sg_2$  such that: 
\begin{equation*}  T=\sum_i [Y_i,Z_i]\qand Y_i=[X_i,Y_i]\; .\end{equation*} 
In other words, we proved:
 \begin{coro} \label{p.deceig}Any element of $ \sg_{2}$ can be written as a sum of Lie brackets of  elements  of $\sg_{2}$  with elements in $\seig( \ad_{\sg_{2} } )\cap \sg_{2 }$.
 \end{coro}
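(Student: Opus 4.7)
The plan is to derive \cref{p.deceig} directly from \cref{p.pre_deceig} by transporting the decomposition along a Lie algebra isomorphism. The starting observation is that the canonical inclusion
\[ \si_2 \colon X \in \diff_c^\infty(M) \mapsto (0,X) \in \sg_2 \]
is a Lie algebra isomorphism. This is immediate because any two vector fields on $\T\times M$ of the form $(0,X),(0,Y)$ have no $\theta$-components and do not depend on $\theta$, so the formula $[A,B]=DB(A)-DA(B)$ in $\diff_c^\infty(\T\times M)$ yields $(0,[X,Y]_M)$, where $[\cdot,\cdot]_M$ denotes the bracket on $M$.

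Next, given any $T\in \sg_2$, I would write $T=\si_2(f)$ for the unique $f\in \diff_c^\infty(M)$ and apply \cref{p.pre_deceig} with $V=M$. This produces finite families $(x_i)_i,(y_i)_i,(z_i)_i$ in $\diff_c^\infty(M)$ satisfying
\[ f=\sum_i [y_i,z_i] \qand y_i=[x_i,y_i] \text{ for each } i. \]
Setting $X_i:=\si_2(x_i)$, $Y_i:=\si_2(y_i)$, $Z_i:=\si_2(z_i)\in \sg_2$ and using that $\si_2$ preserves brackets, I obtain
\[ T=\sum_i [Y_i,Z_i] \qand Y_i=[X_i,Y_i]. \]
The identity $Y_i=[X_i,Y_i]$ with $X_i\in \sg_2$ places $Y_i$ in $\seig(\ad_{\sg_2})\cap \sg_2$; together with $Z_i\in\sg_2$, this is exactly the decomposition claimed.

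Since the argument is simply the pullback along $\si_2$ of the decomposition supplied by \cref{p.pre_deceig}, there is no genuine obstacle at this stage: all the substantive content lies in \cref{p.pre_deceig} itself. The only point requiring care is the verification that $\si_2$ is a Lie algebra morphism, which follows from the product structure of $\T\times M$ and the explicit bracket formula recalled above.
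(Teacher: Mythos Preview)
Your proof is correct and follows essentially the same approach as the paper: both transport the decomposition of \cref{p.pre_deceig} (applied with $V=M$) through the Lie algebra isomorphism $\si_2\colon \diff_c^\infty(M)\to \sg_2$. Your write-up is in fact slightly more detailed, spelling out the verification that $\si_2$ preserves brackets.
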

This corollary allows to deduce: 
\begin{proof}[Proof of \cref{p.hatg2}]
Let $p_\theta: \T\times M\to \T$ be the projection on the  $ \T$-coordinate.

First note that by Fourier decomposition theorem, the space $\sg_3$ is the closure of the vector space spanned by elements of the form:
% \begin{equation*} (\theta,y) \in \T \times M  \mapsto   \left(0,\phi ( \theta ) \cdot Y(y)\right) \in \R \times TM  \; ,\quad \text{for } \phi  \in C^\infty ( \T , \R )\qand  
% Y \in \diff_c^\infty (  M) \; .\end{equation*} 
\[  \phi \circ p_\theta  \cdot  Y \quad \text{for } \phi  \in C^\infty ( \T , \R )\qand  Y \in \sg_2 \; . \]
Since $ \spp$ is a closed vector space by \cref{p.sppclose}, it suffices to show any such $ \phi \circ p_\theta \cdot Y $ is in $\spp$. 
% Consider such a  $ Y \in  \sg_2 $ and $ \phi \in C^\infty ( \T ,  \R) $. It suffices then to show that $ \phi \circ p_\theta\cdot Y$ is in $\spp$.
To do so, we first start with the case where there exists $ X \in \sg_2 $ such that $ Y = [ X, Y ] $, i.e. we  assume that $  Y \in \seig( \ad_{\sg_2 }  ) $.
Since the $ \T$-coordinate $p_\theta \circ X$   of $X$ is zero, it follows:
\begin{equation}\label{phiYeig} [ X , \phi \circ p_\theta \cdot Y  ] =\phi \circ p_\theta \cdot DY(X) -
DX  ( \phi \circ p_\theta \cdot Y) = \phi \circ p_\theta \cdot [ X ,  Y  ]= \phi \circ p_\theta \cdot Y   \; . \end{equation} 
Thus by \cref{p.adjunctionstability}   we have $ \phi \circ p_\theta \cdot Y \in \spp $ since $X\in \sg_2 \subset \spp$. 
Now in the general case, for every $Y\in \sg_2$,  by  \cref{p.deceig}, there exist an integer $ N \ge 1 $, $  Z _i \in \sg_2$ and $ Y _i \in \seig  ( \ad_{\sg_2 } ) \cap \sg_2  $ for any $ 1 \le i \le N$, such that:
\begin{equation} \label{Ysum}   Y =  \sum_{ 1  \le N } [Y_i , Z_i ]  \; .  \end{equation}  
By the first case for any $ 1 \le i \le N$ we have  $ \phi  \circ p_\theta\cdot Y_i \in \spp $. Since $ \spp $ is a Lie-algebra and each $Z_i$ is in $ \sg_2 \subset \spp$, it immediately follows:
\begin{equation}\label{dec g3}  \phi\circ p_\theta \cdot Y = \sum_{ 1 \le i \le N} \phi \circ p_\theta \cdot [ Y_i , Z_i ] =  \sum_{ 1 \le i \le N}  [ \phi\circ p_\theta \cdot Y_i , Z_i ]    \in  \spp  \; .  \end{equation}
\end{proof}
\paragraph{Parametric counterpart.}The following is the parametric counterpart of \cref{p.adjunctionstability}.
% To prove \cref{p.hatg2.para}, we will use:
\begin{proposition}\label{p.adjunctionstabilitycP}
If $ Y_\cP \in \diff_c^\infty ( \T \times M)_\cP  $ satisfies $ [ X_\cP , Y_\cP ] = Y_\cP $ for $ X_\cP \in \spp_\cP $, then $ Y_\cP \in \spp_\cP $.
\end{proposition}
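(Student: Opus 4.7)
The plan is to mirror the proof of \cref{p.adjunctionstability} fiber-wise, replacing every single-map statement by its $\cP$-family counterpart and invoking the parametric tools developed earlier in the paper.

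First I would observe that \cref{horousefull} is a purely pointwise identity in Lie-theoretic terms: since for each $p \in \cP$ the identity $[X_p, Y_p] = Y_p$ holds in $\diff_c^\infty(\T\times M)$, the lemma gives
\begin{equation*}
\Fl^t_{Y_p} = \Fl^{-s}_{X_p} \circ \Fl^{t\cdot e^{-s}}_{Y_p} \circ \Fl^{s}_{X_p}, \qquad \forall s,t\in \R, \ \forall p\in \cP.
\end{equation*}
Assembling these identities into $\cP$-families, I obtain the factorization $(\Fl^t_{Y_p})_{p\in \cP} = (\Fl^{-s}_{X_p})_{p\in\cP} \circ (\Fl^{t\cdot e^{-s}}_{Y_p})_{p\in\cP} \circ (\Fl^{s}_{X_p})_{p\in \cP}$ in $\Diff_c^\infty(\T\times M)_\cP$, where smoothness in $p$ is preserved by the smoothness of the flows.

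Next, fix a neighborhood $\cN$ of $\id\in \Diff^\infty(\T\times M)$; I want to build a $\cP$-plugin in $\cN$ whose output is $(\Fl^t_{Y_p})_{p\in\cP}$. Let $\cN_c$ be the neighborhood of $\id\in \Diff_c^\infty(\T\times M)_\cP$ provided by \cref{l.neigh.of.Id cP}. Since the family $(\Fl^{t\cdot e^{-s}}_{Y_p})_{p\in\cP}$ converges to $(\id)_{p\in\cP}$ in $\Diff_c^\infty(\T\times M)_\cP$ as $s\to+\infty$ (uniformly on the compact parameter manifold $\cP$), I may choose $s$ large enough so that $(\Fl^{t\cdot e^{-s}}_{Y_p})_{p\in\cP}\in \cN_c$. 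Then for all sufficiently large $k$, \cref{l.neigh.of.Id cP} produces a $\cP$-plugin $g_\cP\in \cN$ of step $2^{-k}$ whose output is this family. Since $X_\cP\in \spp_\cP$, for every large $k$ there are $\cP$-plugins $h_\cP,f_\cP\in \cN$ of steps $2^{-k}$ and $2^{-k-1}$ with respective outputs $(\Fl^s_{X_p})_{p\in\cP}$ and $(\Fl^{-s}_{X_p})_{p\in\cP}$.

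Finally, I apply the parametric $\star$-product, which is well-defined and has the expected composition property on outputs (the parametric analogue of \cref{outputstar} noted at the end of \cref{group section}) and which, by the parametric version of \cref{p.continuityatId} recorded in the proof of \cref{c.closure cP}, stays close to the identity when its inputs do. Hence $(h_\cP \star g_\cP)\star f_\cP$ is a $\cP$-plugin of step $2^{-k-2}$ whose output is $(\Fl^t_{Y_p})_{p\in\cP}$ and which lies in any prescribed neighborhood of $\id$ provided $\cN$ was chosen small enough. Since $t\in \R$ was arbitrary, this proves $Y_\cP\in \spp_\cP$. The only genuine subtlety is to ensure uniformity in $p\in \cP$ when sending $s\to\infty$ so that the family $(\Fl^{t\cdot e^{-s}}_{Y_p})_{p\in\cP}$ lies in $\cN_c$; this uses compactness of $\cP$ together with the fact that $Y_\cP$ is compactly supported in $V\times \cP$, so the flows are uniformly controlled.
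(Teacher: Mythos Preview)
Your proposal is correct and follows essentially the same approach as the paper: apply \cref{horousefull} fiberwise, use \cref{l.neigh.of.Id cP} to realize the middle factor $(\Fl^{t\cdot e^{-s}}_{Y_p})_{p\in\cP}$ as the output of a close-to-identity $\cP$-plugin for $s$ large, and conclude via the parametric $\star$-product with $\cP$-plugins for $(\Fl^{\pm s}_{X_p})_{p\in\cP}$. Your added remark on uniformity in $p$ via compactness of $\cP$ and compact support of $Y_\cP$ is a welcome clarification that the paper leaves implicit.
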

  \begin{proof}
Let  $X_\cP=:( X_p)_{p \in \cP } $ and $Y_\cP=:( Y_p)_{p \in \cP } $. 
By \cref{horousefull}, it holds: 
\begin{equation}  \Fl_{Y_p}^t = \Fl_{X_p}^{-s} \circ \Fl_{Y_p}^{t \cdot e^{-s} } \circ \Fl_{ X_p}^s \; ,  \end{equation} 
for any $t,s \in \R $. 
For any neighborhood $ \cN $ of $  \id \in \Diff^\infty ( \T  \times M)_\cP $, we denote  $ \cN_{ c } $  the neighborhood of $  \id \in \Diff_c^\infty ( \T  \times M)_\cP $ given by \cref{l.neigh.of.Id cP}. 
For all $t \in \R $ and for $s$ large enough $ ( \Fl_{Y_p}^{t \cdot e^{-s} }  )_{ p \in \cP} \subset \cN_{c } $ and thus it is the output of a $\cP$-plugin in $ \cN$ of any small step.
We now regard the $\star$ product of the latter plugins with $\cP$-plugins of $ (\Fl^s_{X_p} )_{ p \in \cP} $ and $ ( \Fl^{-s}_{X_p} )_{ p \in \cP}$ to obtain  a  $\cP$-plugin with output $ ( \Fl_{Y_p}^{t}  )_{ p \in \cP} $ of any small step. Moreover, by construction, this $\cP$-plugin can be taken arbitrarily close to identity family, which ends the proof.  
 \end{proof}
 
For $\sg_\cP \subset \diff_c^\infty( V)_\cP $, we   denote:
\begin{equation*}  \seig (  \ad_{\sg_\cP}  ):= \lbrace  Y_\cP  \in \diff_c^\infty ( V )_\cP : \exists X_\cP \in \sg_\cP \text{ such that } Y_\cP = [ X_\cP , Y_\cP ]    \rbrace \; . 
\end{equation*} 
The following parametric counterpart of \cref{p.pre_deceig} holds:
\begin{proposition} \label{p.deceigcP}
For every $T_\cP \in \diff^\infty_c(V)_\cP$, there exist finite families $(X_{i\cP})_i,(Y_{i\cP})_i,(Z_{i\cP})_i$ of vector fields in $  \diff^\infty_c(V)_\cP$ such that: 
\begin{equation*}  T_\cP =\sum_i [Y_{i\cP},Z_{i\cP}]\qand Y_{i\cP}=[X_{i\cP},Y_{i\cP}]\; .\end{equation*} 
\end{proposition}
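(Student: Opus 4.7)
The plan is to repeat the constructive proof of the non-parametric \cref{p.pre_deceig} while tracking smooth dependence on $p \in \cP$ at each step. That proof already proceeds by reducing $\R^n$ to $\R$ parametrized by the remaining $n-1$ coordinates, so our extension to external parameters amounts to adjoining $\cP$ to that internal parameter space.

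Concretely, for $V = \R$ and a $\cP$-family $T_\cP = (T_p)_p$ with common compact support, I would apply the base construction of \cref{p.pre_deceig}: the vector fields $X$ and $Y$ (suitable compactly supported versions of $-y\,\partial_y$ and $\partial_y$ satisfying $[X,Y] = Y$) can be chosen independently of $p$, since they depend only on a compact set in $\R$ containing the projection of $\supp \widehat{T_\cP}$. The auxiliary $Z_p$ is built from $T_p$ by $C^\infty$-continuous linear operations (indefinite integration plus multiplication by fixed, $p$-independent cutoffs, plus the standard correction by a fixed bump to handle the case $\int T_p \ne 0$), so $Z_\cP$ lies in $\diff^\infty_c(\R)_\cP$ and smoothness in $p$ is automatic. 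For $V = \R^n$, I would then apply this with parameter space enlarged from $\cP$ to $\cP \times \R^{n-1}$, or equivalently run the proof of \cref{p.pre_deceig} verbatim with the external parameter $p$ adjoined to the internal ones.

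For a general manifold $V$, I would fix once and for all a locally finite cover of $V$ by charts diffeomorphic to $\R^n$, and a subordinate partition of unity $(\rho_j)_j$, both chosen independently of $p$. Since $\supp T_\cP$ is compact in $V \times \cP$, the decomposition $T_\cP = \sum_j \rho_j T_\cP$ is a finite sum of $\cP$-families each supported in a single chart; applying the $\R^n$-case to each summand and combining the resulting decompositions gives the desired $(X_{i\cP}, Y_{i\cP}, Z_{i\cP})$. The main (mild) obstacle is verifying that every cutoff, primitive, and partition of unity appearing in the proof of \cref{p.pre_deceig} can be chosen independently of $p$ so as to preserve smooth dependence; this is automatic from the compactness of $\supp T_\cP$ in $V \times \cP$, which guarantees uniform control over the family and hence the existence of $p$-independent compactly supported auxiliary data.
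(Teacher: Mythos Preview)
Your proposal is correct and follows essentially the same route as the paper: the paper rephrases \cref{p.deceigcP} as $[\seig(M)_\cP,\diff^\infty_c(M)_\cP]=\diff^\infty_c(M)_\cP$, proves the case $M=\R^n$ by applying the parametric one-dimensional lemma with parameter space $\R^{n-1}\times\cP$ (exactly your ``adjoin $\cP$ to the internal parameter space''), and then passes to a general manifold via a $p$-independent partition of unity. One small discrepancy: the paper's base construction on $\R$ does not use a ``correction by a fixed bump'' for the case $\int T_p\neq 0$; instead the diffeomorphism $\psi:(-a,a)\to\R$ is chosen so that the pulled-back primitive $\widetilde Z_p=\psi^*\!\int T_p$ extends smoothly by zero regardless of the value of the integral, and $\psi$ (hence $\widetilde X,\widetilde Y$) depends only on the common support interval and not on $p$.
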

Using the isomorphism:
\[ X_\cP \in \diff_c^\infty ( M)_\cP  \mapsto  ( 0, X_p )_{p \in \cP}  \in \sg_{2 \cP}   \] 
leads as before to: 
 \begin{coro} \label{c.deceigcP} Any element of $ \sg_{2\cP}$ can be written as a sum of Lie brackets of  elements  of $\sg_{2\cP}$  with elements in $\seig( \ad_{\sg_{2\cP} } )\cap \sg_{2\cP}$.
 \end{coro}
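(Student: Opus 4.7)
The plan is to reduce the statement to \cref{p.deceigcP} via a Lie algebra isomorphism, exactly as the non-parametric \cref{p.deceig} was reduced to \cref{p.pre_deceig}.

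First, I would check that the map
\[ \si_{2\cP} : X_\cP = ( X_p) _{ p \in \cP} \in \diff_c^\infty ( M )_\cP \mapsto ( 0 , X_p)_{p \in \cP} \in \sg_{2\cP} \]
is a well-defined isomorphism of Lie algebras. Well-definedness amounts to observing that $\widehat{\si_{2\cP}(X_\cP)}(\theta,y,p) = (0, X_p(y), 0)$ is smooth and compactly supported on $\T\times M\times\cP$ whenever $\widehat{X_\cP}$ is, which is immediate from the definition of $\widehat{\cdot}$ in \cref{def widehat,def hat X}. Bijectivity onto $\sg_{2\cP}$ is built into the definition of $\sg_{2\cP}$. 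Since the Lie bracket on $\diff_c^\infty(\T\times M)_\cP$ is computed pointwise in $p$, and since for fixed $p$ the map $X_p\mapsto (0,X_p)$ is a Lie algebra embedding $\diff_c^\infty(M)\hookrightarrow\diff_c^\infty(\T\times M)$ (the $\theta$-coordinate of the bracket of two such fields vanishes identically), $\si_{2\cP}$ intertwines the brackets.

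Next, given $T_\cP\in\sg_{2\cP}$, I write $T_\cP=\si_{2\cP}(T'_\cP)$ for the unique $T'_\cP\in\diff_c^\infty(M)_\cP$ whose image under $\si_{2\cP}$ is $T_\cP$. Applying \cref{p.deceigcP} to $T'_\cP$ yields finite families $(X'_{i\cP})_i$, $(Y'_{i\cP})_i$, $(Z'_{i\cP})_i$ of elements of $\diff_c^\infty(M)_\cP$ such that
\[ T'_\cP = \sum_i [ Y'_{i\cP}, Z'_{i\cP}] \qand Y'_{i\cP} = [ X'_{i\cP}, Y'_{i\cP}] \; . \]
Setting $X_{i\cP}:=\si_{2\cP}(X'_{i\cP})$, $Y_{i\cP}:=\si_{2\cP}(Y'_{i\cP})$, $Z_{i\cP}:=\si_{2\cP}(Z'_{i\cP})$, all of which lie in $\sg_{2\cP}$, and using that $\si_{2\cP}$ is a Lie algebra homomorphism, I obtain
\[ T_\cP = \sum_i [ Y_{i\cP}, Z_{i\cP}] \qand Y_{i\cP} = [ X_{i\cP}, Y_{i\cP}] \; , \]
so each $Y_{i\cP}$ lies in $\seig(\ad_{\sg_{2\cP}})\cap\sg_{2\cP}$, which is the decomposition claimed.

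There is essentially no obstacle here beyond bookkeeping: the substantive content is entirely contained in \cref{p.deceigcP}, whose parametric proof is where the real work (replicating the non-parametric compactly supported construction on $M$ in a way that depends smoothly on $p\in\cP$, then gluing via a partition of unity) is done. The corollary is the expected formal consequence of that result.
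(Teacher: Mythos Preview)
Your proposal is correct and follows exactly the paper's approach: the paper simply invokes the isomorphism $X_\cP\in\diff_c^\infty(M)_\cP\mapsto (0,X_p)_{p\in\cP}\in\sg_{2\cP}$ and says this ``leads as before'' to the corollary from \cref{p.deceigcP}, which is precisely the transfer you spell out in detail.
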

The latter allows to deduce:
\begin{proof}[Proof of \cref{p.hatg2.para}]
By the Fourier decomposition theorem, the space $ \sg_{3 \cP} $ is the closure in $ \diff_c^\infty ( \T \times M ) $ of the vector space spanned by vector fields of the form $( \phi \circ p_\theta \cdot Y_p)_{ p \in \cP} $ with $ \phi \in C^\infty( \T , \R) $ and $ (Y_p)_{p \in \cP} \in \sg_{2\cP} $.
% Consider $ Y_\cP \in  \sg_2 $ and $ \phi \in C^\infty ( \T ,  \R) $. 
Since $ \spp_\cP$  is a closed vector space by \cref{p.sppclose.para}, it suffices then to show that $(\phi \circ p_\theta\cdot Y_p ) _{ p \in \cP} $ is in $\spp_\cP$. 
To do so, we first start with the case where there exists $ X_\cP \in \sg_{2\cP}  $ such that $ Y_\cP = [ X_\cP, Y_\cP  ] $.  By \cref{phiYeig,dec g3} of the proof of  \cref{p.hatg2}, for each $p \in \cP$, it holds $ \phi \circ p_\theta \cdot Y_p = [ X_p ,\phi \circ p_\theta \cdot Y_p ]  $. And thus the family $ ( \phi \circ p_\theta \cdot Y_p ) _{p \in \cP} $ is in $ \spp_\cP$ by \cref{p.adjunctionstabilitycP}. 

Now in the general, by the latter \cref{c.deceigcP} we can decompose:
\begin{equation}
    Y_\cP = \sum  [ Y_{i\cP} , Z_{i\cP} ] \; ,
\end{equation}    
with $ Y_{i\cP} \in \seig ( \sg_{2\cP} )\cap  \sg_{2\cP}$ and $Z_{i\cP} \in \sg_{2\cP} $.
Thus \cref{dec g3} applied for each $p \in \cP$ leads to: 
\begin{equation} ( \phi \circ p_\theta \cdot Y_p )_{p \in \cP} =\sum  [ ( \phi \circ p_\theta \cdot Y_{i p}  )_{p \in \cP},  Z_{i\cP}]  \;,  \end{equation}
where $ ( Y_{ip} )_{p \in \cP}  = Y_{i\cP}$. 
Since $Z_{i\cP} \in \sg_{ 2\cP} \subset \spp_\cP $ and $ ( \phi \circ p_\theta \cdot Y_{i p}  )_{p \in \cP} \in  \spp_\cP $ by the first case, it follows that $ ( \phi \circ p_\theta \cdot Y_p )_{p \in \cP}$ lies in the sub Lie algebra $ \spp_\cP$ as well.
\end{proof}

\subsection{Decomposition of  vector fields}
\label{proof of deceig}
To prove the \cref{mainplug},  it remains only to show \cref{p.pre_deceig}.  We first prove this proposition in the case $M=\R^n$, which we will deduce from the case $M=\R$ and a parametric version thereof.
This whole section is dedicated to this proof. 
  For a manifold  $M$, let us denote:
  \begin{equation*}  \seig( M) := \{Y\in \diff_c^\infty (  M  ): \exists X\in\diff_c^\infty (  M  ) \text{ such that }\ad_XY=[X,Y]=Y\}\; .\end{equation*} 
 We can rephrase  \cref{p.pre_deceig} as: 
\begin{proposition}\label{egatiliteHN} 
Every vector field in $T\in \diff^\infty_c(V)$ is a finite sum of vector fields of the form  $[Y_i,Z_i]$ with 
$Y_i\in  \seig( M)$. In other words:
\begin{equation*}  [\seig(M),\diff^\infty_c(M)]=\diff^\infty_c(M).\end{equation*}  
\end{proposition}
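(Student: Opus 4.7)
The plan is to reduce \cref{egatiliteHN} to the one-dimensional case via two standard simplifications and then carry out an explicit construction on $\R$.

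\textbf{Reductions.} A partition of unity $(\rho_j)_j$ subordinate to a locally finite atlas of $M$ decomposes $T = \sum_j \rho_j T$ into finitely many pieces supported in single charts (finiteness from compactness of $\supp T$), reducing to $M = \R^n$. For $T \in \diff_c^\infty(\R^n)$, split $T = \sum_{k=1}^n T^k \partial_{y_k}$; by symmetry it suffices to handle $T^1\partial_{y_1}$. Viewing $(y_2,\ldots,y_n)$ as parameters, one produces $X_i, Y_i, Z_i$ in the $\partial_{y_1}$ direction with smooth parameter dependence, satisfying $T^1\partial_{y_1} = \sum_i [Y_i, Z_i]$ and $Y_i = [X_i, Y_i]$. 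Compact support in the parameter directions is obtained by a transverse bump cutoff equal to $1$ on $\supp Y_i$; since the cutoff depends only on $(y_2,\ldots,y_n)$ it commutes with $\partial_{y_1}$-derivatives and preserves the bracket identities.

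\textbf{Base case $M = \R$.} The heart of the proof is to produce, on any prescribed interval $(a,b)$, a reference pair $X,Y \in \diff_c^\infty(\R)$ with $\supp Y \subset [a,b]$, $Y$ nowhere zero on $(a,b)$, and $[X,Y] = Y$. The noncompact model $X_0 = -y\,\partial_y,\ Y_0 = \partial_y$ satisfies $[X_0,Y_0] = Y_0$, but a naive cutoff destroys the identity. Instead, pick a bump $\chi \in C_c^\infty(\R)$ supported in $[a,b]$, positive on $(a,b)$, flat at $a$ and $b$, with $\chi(c) = 1$ at a chosen interior point $c \in (a,b)$; set $X = (c-y)\chi(y)\,\partial_y$. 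Then $X$ is smooth on $\R$, has flat zeros at $a$ and $b$, and a simple zero at $c$ with $X'(c) = -1$. Solving the linear first-order ODE $XY' - X'Y = Y$ on $(a,c)$ and $(c,b)$ yields $Y$: the coefficient $(1+X')/X$ extends smoothly across $c$ because $1+X'$ and $X$ both vanish to exactly first order there (this is where $X'(c) = -1$ is crucial), so $Y$ is $C^\infty$ across $c$ with a freely chosen nonzero value; near $a$ and $b$ the coefficient $1/X$ blows up flatly, so $\int(1+X')/X$ diverges and $Y$ is flat at the endpoints, extending $C^\infty$ by zero outside $[a,b]$. Translations and rescalings give analogous pairs on arbitrary intervals. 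Given $Y = y_0\partial_y$, any $T = t(y)\partial_y$ with $\supp T \subset (a,b)$ satisfies $T = [Y,Z]$ for $Z = \bigl(y_0 \int t/y_0^2 \, dy\bigr)\partial_y$, which is smooth and compactly supported (the integral is zero before $\supp t$ and constant after, so $Z$ equals a constant multiple of $y_0$ near $b$, hence flat there). A general $T \in \diff_c^\infty(\R)$ is handled by a finite partition of unity of $\supp T$ by subintervals, each treated with a translated-rescaled reference pair.

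\textbf{Main obstacle.} The technical crux is the base-case construction: compact support of $X$ forces $Y$ to vanish at the boundary of $\supp Y$ at a rate dictated by the vanishing order of $X$, while interior zeros of $X$ impose smooth matching conditions on $Y$. Reconciling these constraints with smoothness and compact support is what the choice $X = (c-y)\chi(y)\partial_y$, with $\chi$ flat at the endpoints and $\chi(c) = 1$, is designed to achieve.
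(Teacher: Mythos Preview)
Your overall architecture—partition of unity to reduce to charts, splitting into coordinate directions, treating the remaining coordinates as parameters in a one-dimensional problem—is exactly the paper's. The only substantive difference is the base case on $\R$. The paper takes the non-compact model $X_0=-y\,\partial_y$, $Y_0=\partial_y$, $Z_0=\bigl(\int_{-\infty}^y T\bigr)\partial_y$ and pulls all three back through an explicit diffeomorphism $\psi\colon(-a,a)\to\R$, namely $\psi(y)=y\exp\bigl(e^{(a^2-y^2)^{-1}}\bigr)$; the bracket identities transfer automatically by naturality, and the only check is that $\psi^*X_0$, $\psi^*Y_0$, $\psi^*Z_0$ extend smoothly by zero, which follows from the explosive growth of $\partial_y\psi$ at $\pm a$. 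Your route—writing down a compactly supported $X=(c-y)\chi\,\partial_y$ directly and solving the linear ODE $[X,Y]=Y$—is a valid alternative yielding the same kind of pair, at the cost of having to verify flatness of $Y$ at $a,b$ by hand rather than inheriting it from $\psi$. Your sketch of this is correct but thin; it would strengthen it to note that near an endpoint one has $Y=C\,|X|\,\exp\bigl(\int 1/X\bigr)$, and that the flatness of $\chi$ forces $\exp\bigl(\int 1/X\bigr)$ to decay faster than any power of $1/X$. The extra partition of unity on $\supp T\subset\R$ is harmless but unnecessary: a single interval $(a,b)\supset\supp T$ already suffices.

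One genuine imprecision: ``a transverse bump cutoff equal to $1$ on $\supp Y_i$'' cannot be right, since your $Y_i$ is parameter-independent and hence has unbounded support in $(y_2,\dots,y_n)$. What works is a cutoff $\rho(y_2,\dots,y_n)$ equal to $1$ on the transverse projection of $\supp T$, together with a second cutoff $\sigma\equiv 1$ on $\supp\rho$: then $[\sigma X,\rho Y]=\sigma\rho[X,Y]=\rho Y$ (a single cutoff on both would give $\rho^2 Y$, not $\rho Y$), while $Z$ is already compactly supported and $[\rho Y,Z]=\rho[Y,Z]=T^1\partial_{y_1}$. This is a routine fix.
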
   
Two key ingredients of the proof of this proposition are the following observations:
\begin{fact}\label{horo historique}The vector field  $Y : y \mapsto  1 $ on $\R$ satisfies $[X,Y]= Y$ with $X: y\in \R\mapsto -y$.
\end{fact}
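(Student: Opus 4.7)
The claim is a direct computation once the definitions are unwound. My plan is simply to apply the definition of the Lie bracket recalled earlier in the paper, namely $[X,Y] = DY(X) - DX(Y)$, to the two specific vector fields $X(y) = -y$ and $Y(y) = 1$ on $\R$.

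First I would observe that $Y$ is the constant vector field with value $1$, so its derivative is $DY \equiv 0$. Next, $X$ is linear with $DX \equiv -1$. Plugging these in yields
\begin{equation*}
[X,Y](y) = DY(X(y)) - DX(Y(y)) = 0 - (-1)\cdot 1 = 1 = Y(y),
\end{equation*}
which is exactly the claimed identity $[X,Y] = Y$.

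There is no real obstacle here; the fact is essentially a sanity check of the bracket convention and serves as the model example behind the reduction used in the proof of \cref{egatiliteHN}. The only point worth flagging is the sign convention: with the convention $[X,Y] = DY \cdot X - DX \cdot Y$ adopted in the paper, the choice $X(y) = -y$ (rather than $+y$) is what produces the eigenvalue $+1$ instead of $-1$. No further steps are needed.
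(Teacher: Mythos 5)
Your computation is correct and is exactly the argument the paper intends: the paper states this as an unproved fact, and with its convention $[X,Y]=DY(X)-DX(Y)$ one gets $[X,Y](y)=0-(-1)\cdot 1=1=Y(y)$, just as you wrote. Your remark about the sign of $X$ being tied to the bracket convention is apt and consistent with the paper's usage.
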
 
\begin{fact}\label{ad horo stab} 
For any diffeomorphism $\psi\colon W\to V$ between manifolds and any vector fields ${X,Y\in \diff_c^\infty(V)}$  satisfying  
$[X,Y]= Y$, it holds $[\psi^*X,\psi^*Y]= \psi^*Y$.
 \end{fact}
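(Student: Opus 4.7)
The plan is to invoke the naturality of the Lie bracket under diffeomorphisms, which says: for any diffeomorphism $\psi\colon W\to V$ and any vector fields $A,B\in\diff^\infty(V)$, one has
\begin{equation*}
[\psi^*A,\psi^*B]=\psi^*[A,B].
\end{equation*}
Given this identity, the statement follows in one line: applying it with $A=X$, $B=Y$ and using the hypothesis $[X,Y]=Y$ gives $[\psi^*X,\psi^*Y]=\psi^*[X,Y]=\psi^*Y$. Since $\psi$ is a diffeomorphism, pullbacks preserve both smoothness and compact support, so all the quantities lie in $\diff_c^\infty(W)$ as required.

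For a self-contained justification of the naturality identity, I would use the derivation characterization of vector fields. Recall that $\psi^*A$ is the unique vector field on $W$ satisfying
\begin{equation*}
(\psi^*A)(f\circ\psi)=(Af)\circ\psi,\qquad \forall f\in C^\infty(V).
\end{equation*}
Iterating this twice,
\begin{equation*}
(\psi^*A)(\psi^*B)(f\circ\psi)=(\psi^*A)((Bf)\circ\psi)=(A(Bf))\circ\psi,
\end{equation*}
and by symmetry the same with $A,B$ swapped. Subtracting yields
\begin{equation*}
[\psi^*A,\psi^*B](f\circ\psi)=([A,B]f)\circ\psi,
\end{equation*}
which is exactly the defining property of $\psi^*[A,B]$. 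Hence $[\psi^*A,\psi^*B]=\psi^*[A,B]$.

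Alternatively, one could give a flow-theoretic proof: the flow of a pulled-back field is $\Fl^t_{\psi^*A}=\psi^{-1}\circ\Fl^t_A\circ\psi$, and the Lie bracket measures the infinitesimal failure of two flows to commute; conjugation by a single diffeomorphism $\psi$ preserves this infinitesimal commutator. No real obstacle is expected: this is a standard naturality property, and its role in the paper is precisely to transfer the explicit eigenvector relation of Fact~\ref{horo historique} (available on $\R$) to arbitrary manifolds via coordinate charts $\psi$, so that eigenvectors of $\ad_X$ can be constructed locally and then combined via partition of unity.
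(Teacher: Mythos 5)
Your proposal is correct. The paper states this Fact without proof, treating it as the standard naturality of the Lie bracket under pullback by a diffeomorphism, $[\psi^*A,\psi^*B]=\psi^*[A,B]$; your derivation-based verification of that identity (via $(\psi^*A)(f\circ\psi)=(Af)\circ\psi$, iterated and antisymmetrized) together with the observation that pullback by a diffeomorphism preserves compact support is exactly the argument the paper implicitly relies on, so there is nothing to add.
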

 An important  consequence of the latter fact is:
 \begin{fact}\label{ad eig stab} 
For any diffeomorphism $\psi\colon W\to V$ from a manifold $ W$ into $V$, if $X\in  [\seig(V),\diff^\infty_c(V)]$, then $ \psi^* X\in [\seig(W),\diff^\infty_c(W)]$. 
 \end{fact}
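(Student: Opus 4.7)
The plan is to pull back a finite decomposition of $X$ term by term, relying on the two elementary observations that $\psi^*$ is a Lie algebra homomorphism of $\diff^\infty_c$ and that Fact~\ref{ad horo stab} turns eigenvectors of $\ad_{X_i}$ into eigenvectors of $\ad_{\psi^*X_i}$. No hard step is expected; the only points to verify are that all constituent fields remain compactly supported in the interior of $W$ after pullback.

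More precisely, I would start by unpacking the hypothesis: $X\in [\seig(V),\diff^\infty_c(V)]$ means that there is a finite expression
\begin{equation*}
 X = \sum_{i=1}^{N} [Y_i, Z_i], \qquad Y_i\in \seig(V),\ Z_i\in \diff^\infty_c(V),
\end{equation*}
and for each $i$ an auxiliary field $X_i\in \diff^\infty_c(V)$ with $[X_i,Y_i]=Y_i$ witnessing $Y_i\in \seig(V)$. Since $\psi\colon W\to V$ is a diffeomorphism, it sends $\partial W$ onto $\partial V$ and $W\setminus\partial W$ onto $V\setminus\partial V$, so for any $U\in \diff^\infty_c(V)$ the pullback $\psi^*U$ is smooth with $\supp(\psi^*U)=\psi^{-1}(\supp U)$ compact and contained in $W\setminus\partial W$; thus $\psi^*$ maps $\diff^\infty_c(V)$ into $\diff^\infty_c(W)$.

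Being induced by a diffeomorphism, $\psi^*$ is moreover a Lie algebra morphism, i.e.\ $\psi^*[A,B]=[\psi^*A,\psi^*B]$ and $\R$-linear, so
\begin{equation*}
 \psi^* X = \sum_{i=1}^{N} [\psi^*Y_i,\psi^*Z_i].
\end{equation*}
Fact~\ref{ad horo stab} applied to each pair $(X_i,Y_i)$ gives $[\psi^*X_i,\psi^*Y_i]=\psi^*Y_i$, and since $\psi^*X_i\in \diff^\infty_c(W)$ by the previous paragraph, this shows $\psi^*Y_i\in \seig(W)$. Combined with $\psi^*Z_i\in \diff^\infty_c(W)$, the displayed equality exhibits $\psi^*X$ as an element of $[\seig(W),\diff^\infty_c(W)]$, which is exactly the conclusion of Fact~\ref{ad eig stab}.
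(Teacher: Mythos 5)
Your proposal is correct and follows exactly the argument the paper intends (the paper states this fact without proof, as an immediate consequence of \cref{ad horo stab}): pull back a finite decomposition $X=\sum_i[Y_i,Z_i]$ term by term, using that $\psi^*$ is a Lie algebra morphism preserving compact supports, and that \cref{ad horo stab} shows each $\psi^*Y_i$ remains an eigenvector. Nothing to add.
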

  
 Let $\cQ$ be a manifold. We define: 
\begin{equation*} \seig(\R)_\cQ=\Bigl\{Y_\cQ\in \diff^\infty_c(\R)_\cQ:  \text{ there exists a family } X_\cQ\in \diff^\infty_c(\R)_\cQ\text{ such that } 
[X_\cQ,Y_\cQ]=Y_\cQ \Bigr\}.\end{equation*} 
We can  prove   \cref{egatiliteHN} in  the case $M = \R$:
\begin{lemma}\label{case cc dim 1N} We have:
 \begin{equation*}   [ \seig( \R)  , \diff_c^\infty( \R )  ]= \diff_c^\infty( \R )  \; .   \end{equation*} 
 Moreover for any manifold 
 $\cQ$,   every  $T_\cQ \in \diff_c^\infty(\R)_{\cQ}$ satisfies $T_\cQ=[Y_\cQ,Z_\cQ]$ for some ${Y_\cQ\in \seig(\R)_{\cQ}}$ and $Z_\cQ\in \diff_c^\infty(\R)_{\cQ} $. 
\end{lemma}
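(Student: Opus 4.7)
The plan is to construct a single explicit $Y_0\in \seig(\R)$ that is strictly positive on the interior of its support, and then to solve $[Y,Z]=T$ in the linearizing coordinate of $Y$ by direct integration. Let $\psi\in C^\infty(\R)$ be an even bump with $\psi(0)=1$, $\supp\psi=[-1,1]$, and $\psi$ flat to infinite order at $\pm 1$; set $X:=-y\psi(y)\,\partial_y\in\diff^\infty_c(\R)$. Since $-1/X=1/(y\psi(y))=1/y+O(y)$ near $0$, the ODE $\phi'/\phi=-1/X$ with $\phi(0)=0$, $\phi'(0)=1$ has a unique smooth solution on $(-1,1)$, namely $\phi(y)=y\exp\bigl(\int_0^y(1/(s\psi(s))-1/s)\,ds\bigr)$, and $\phi\colon(-1,1)\to\R$ is a diffeomorphism satisfying $\phi_*X=-u\,\partial_u$. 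The flatness of $\psi$ at $\pm 1$ forces $\int_0^y-1/X$ to diverge essentially like $(1-|y|)^{2}e^{(1-y^2)^{-1}}$, so $\phi$ and $\phi'$ blow up double-exponentially at $\pm 1$. Define $Y_0:=(1/\phi')\,\partial_y$ on $(-1,1)$, extended by $0$: then $\phi_*Y_0=\partial_u$, the identity $[-u\partial_u,\partial_u]=\partial_u$ gives $[X,Y_0]=Y_0$, and the double-exponential growth of $\phi'$ makes $1/\phi'$ together with all its derivatives vanish at $\pm 1$, so $Y_0\in\diff^\infty_c(\R)\cap\seig(\R)$.

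For $T\in\diff^\infty_c(\R)$, pick $L>0$ with $\supp T\subset(-L,L)$. Applying the dilation $y\mapsto Ly$ to $(X,Y_0)$ and invoking \cref{ad horo stab} yields $Y_L\in\seig(\R)$ supported in $[-L,L]$, strictly positive on its interior, with linearizing diffeomorphism $\phi_L\colon(-L,L)\to\R$, $(\phi_L)_*Y_L=\partial_u$. In the coordinate $u=\phi_L(y)$ write $T=\tilde T(u)\,\partial_u$ with $\tilde T\in C^\infty_c(\R)$; the equation $[Y_L,Z]=T$ becomes $\partial_u Z_u=\tilde T$, solved by $Z_u(u):=\int_{-\infty}^u\tilde T(s)\,ds$. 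Although $Z_u$ is only eventually equal to $\int\tilde T$ (and hence not compactly supported), the transport back $Z(y):=(Z_u\circ\phi_L)(y)/\phi_L'(y)\cdot\partial_y$ \emph{is} compactly supported: for $y$ close to $\pm L$ the numerator is the bounded constant $\int\tilde T$ while $1/\phi_L'$ is flat to infinite order at $\pm L$, so $Z$ extends smoothly by $0$ and lies in $\diff^\infty_c(\R)$, with $[Y_L,Z]=T$. Every $T$ is thus realized by a single bracket, a fortiori $[\seig(\R),\diff^\infty_c(\R)]=\diff^\infty_c(\R)$.

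For $T_\cQ\in\diff^\infty_c(\R)_\cQ$, compactness of $\supp\widehat{T_\cQ}\subset\R\times\cQ$ provides a single $L$ with $\supp T_p\subset(-L,L)$ for every $p\in\cQ$. The constant family $Y_\cQ:=(Y_L)_{p\in\cQ}$ lies in $\seig(\R)_\cQ$, witnessed by $(X_L)_{p\in\cQ}$, and the explicit formula $Z_p(y):=\bigl(\int_{-\infty}^{\phi_L(y)}\tilde T_p(s)\,ds\bigr)/\phi_L'(y)\cdot\partial_y$ depends smoothly on $(y,p)$ because $\phi_L$ is $p$-independent and $\tilde T_p$ varies smoothly in $p$; hence $Z_\cQ\in\diff^\infty_c(\R)_\cQ$ and $[Y_\cQ,Z_\cQ]=T_\cQ$.

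The main technical point is the $C^\infty$-flatness at $\pm 1$ of $Y_0$ (and correspondingly of $Z$ at $\pm L$). Writing $1/\phi'=-X/\phi$ and applying the Leibniz rule, every $y$-derivative of $1/\phi'$ is a finite polynomial combination of $X^{(j)}$, $\phi^{(k)}$, $\phi^{-\ell}$; the factor $\phi^{-1}$ decays double-exponentially at $\pm 1$, while each of the other factors grows at most single-exponentially in $(1-|y|)^{-1}$, so every derivative vanishes at $\pm 1$ and the extension by $0$ is smooth. The analogous bound, with the additional bounded factor $Z_u\circ\phi_L$, handles $Z$ and its parametric version.
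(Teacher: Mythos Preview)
Your argument is correct and is essentially the paper's proof run in the opposite direction: the paper writes down the linearizing diffeomorphism explicitly as $\psi(y)=y\,e^{e^{(a^2-y^2)^{-1}}}$ from $(-a,a)$ onto $\R$ and then pulls back the model triple $-u\,\partial_u$, $\partial_u$, $\int T$, whereas you first fix the compactly supported $X=-y\psi(y)\,\partial_y$ and recover the diffeomorphism $\phi$ by integrating the linearizing ODE; the smoothness check at the endpoints and the parametric clause (only $Z$ depends on the parameter, while the diffeomorphism and $X,Y$ are fixed) are handled the same way in both. One small imprecision worth tightening: the specific divergence rate $(1-|y|)^{2}e^{(1-y^{2})^{-1}}$ and the ``double-exponential beats single-exponential'' comparison in your last paragraph are not consequences of infinite-order flatness alone but of a particular decay profile for $\psi$ (e.g.\ $\psi\sim e^{-1/(1-y^{2})}$ near $\pm 1$); since you are free to choose $\psi$, simply fixing such a profile at the outset removes the ambiguity.
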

\begin{proof}
We first give an intuitive idea of the proof using
\cref{horo historique}. First note that for every $T\in \diff_c^\infty(\R)$, there exist $X,Y,Z\in \diff^\infty(\R)$ such that $Y=[X,Y]$ and $T=[Y,Z]$. Indeed it suffices to take  $X\colon y\mapsto -y$,  $Y\colon y\mapsto 1$ and   $Z\colon y\mapsto \int_{-\infty}^yT(t)\,  dt$. This almost proves the first assertion of the Lemma.
To have exactly the desired result we shall modify $X$, $Y$ and $Z$ to make them compactly supported.
Let $\widetilde{T}\in \diff^\infty_c(\R)$. Take intervals $[-A,A]\subset (-a,a)$ containing its support. Consider the map $\psi\colon y\mapsto y\cdot e^{e^{(a^2-y^2)^{-1}}}$ from $(-a,a)$ to $\R$. 
We compute its derivative at $ y \in (-a,a)$ by:
\begin{equation}D \psi(y)=  \phi (  y) \cdot e^{e^{(a^2-y^2)^{-1}}}\; ,\quad \text{ where }\phi(y) := 1+2y^2\cdot(a^2-y^2)^{-2}e^{(a^2-y^2)^{-1}} \; .\end{equation}
Thus the map $ \psi $ is a diffeomorphism. This allows to consider the push-forward:
\begin{equation} T=\psi_*\widetilde{T}= D\psi\circ\widetilde T\circ \psi^{-1}
\; .\end{equation} 
Now we define as above:
 \begin{equation} X\colon y\mapsto -y\, , \quad Y\colon y\mapsto 1\qand Z\colon y\mapsto \int_{-\infty}^yT(t)\,  dt\, .\end{equation} 
Note that $\tilde T= \psi^*T$ and put $\widetilde{X}= \psi^*X$ and $\widetilde{Y}= \psi^*Y$. Then  by \cref{ad horo stab}, it holds:
\begin{equation}   \widetilde{Y} = [\widetilde{X}, \widetilde{Y}]\qand [\widetilde{Y},\widetilde{Z}]=\widetilde{T}\; .\end{equation} 
Let us show that these pulled-back vector fields on $(-a,a)$ extend smoothly by $0$ to $\R$.
For any vector field $S \in \diff_c^\infty(\R)$, we have $\psi^*S=(D\psi)^{-1} \circ S\circ \psi = \tfrac{ S \circ \psi }{D\psi } $. 
It follows:
\begin{equation}  \widetilde{X} (y) = \psi^*X (y)  = \tfrac{- \psi (y) }{ D \psi (y)  } =  \tfrac{ -y }{\phi( y) }\qand\widetilde{Y} (y) = \psi^*Y(y) =\tfrac{1 }{D \psi (y) },\end{equation} 
Observe that $ \phi (y) $ grows exponentially fast to $ + \infty $ when $ \vert y \vert \to a$. Thus $\widetilde{X}$   and $\widetilde{Y}$ have all their derivatives tending to $0$ as $y$ tends to $\pm a$. So they extend smoothly by $0$ to vector fields in $\diff_c^\infty(\R)$. 
Also  note that $Z(y)=0$ for $y\leq \psi(-A)$ and $Z(y)= \int_\R T(t)\, dt$ for  $y\geq \psi(A)$. Thus:
\begin{equation} \widetilde{Z}(y)= \psi^*Z(y) =0\quad \mbox{ for } -a  < y\leq -A\qand \widetilde{Z}(y) =\frac1{D \psi(y)}\cdot \int_\R T(t)\cdot dt\quad \mbox{ for } A \leq y <   a .\end{equation} 
Likewise $\widetilde{Z}$ extends smoothly by  $0$ to form a  vector field in $\diff_c^\infty(\R)$. As  $ \widetilde{Y} = [\widetilde{X} , \widetilde{Y}  ] $ and $[\widetilde{Y},\widetilde{Z}]=\widetilde{T}$, this proves the first assertion of the lemma.
\medskip

For the parametric assertion, observe that $\psi$, $\widetilde{X}$ and $\widetilde{Y}$ depends only on the segment $[-a,a]$. Hence given $ \widetilde{T}_\cQ=(\widetilde{T}_q)_{q\in\cQ}\in \diff_c^\infty(\R)_{\cQ}$, we set $[-A,A]\subset (-a,a)$ containing the supports of all $\widetilde{T}_q$ and define  $\psi$, $\widetilde{X}$ and $\widetilde{Y}$ as above. Then we observe that $\widetilde{Z}_q= \psi^*\int T_q$ depends smoothly on $q$ and define a family $\widetilde{Z}_\cQ\in \diff_c^\infty(\R)_\cQ$ which satisfies the desired equalities with $ \widetilde{T}_\cQ$ and the constant families   $(\widetilde{X})_{q\in \cQ}$ and $(\widetilde{Y})_{q\in \cQ}$. 
\end{proof}
We are going to  use the parametric assertion of the latter lemma to obtain:
\begin{lemma}\label{case cc dim N} 
We have $[ \seig(\R^n),\diff^\infty_c(\R^n)]=\diff^\infty_c(\R^n)$.
\end{lemma}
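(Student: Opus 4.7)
The plan is to reduce the $n$-dimensional statement to the one-dimensional parametric version already established in Lemma~\ref{case cc dim 1N}, using $\cQ=\R^{n-1}$ as the parameter manifold.

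First, given $T\in \diff_c^\infty(\R^n)$, decompose it coordinatewise as $T = \sum_{i=1}^n f_i\partial_i$ with $f_i\in C_c^\infty(\R^n,\R)$. Since $[\seig(\R^n),\diff_c^\infty(\R^n)]$ is a vector subspace, it suffices to prove each summand $f_i\partial_i$ lies in it. By permuting coordinates (which is a diffeomorphism of $\R^n$, so preserves both sides via Fact~\ref{ad eig stab}), we may reduce to $i=1$.

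Next, reinterpret $f_1$ as a $\cQ$-family of compactly supported vector fields on $\R$: for each $q=(y_2,\dots,y_n)\in\cQ$, set $T_q(y_1):=f_1(y_1,q)$. Since $\widehat{T_\cQ}$ is just $f_1$, it is smooth and compactly supported on $\R\times\cQ=\R^n$, so $T_\cQ\in \diff_c^\infty(\R)_\cQ$. Apply the parametric assertion of Lemma~\ref{case cc dim 1N} to obtain families $X_\cQ,Y_\cQ,Z_\cQ\in \diff_c^\infty(\R)_\cQ$ with $[X_\cQ,Y_\cQ]=Y_\cQ$ and $[Y_\cQ,Z_\cQ]=T_\cQ$ (fiberwise over $\cQ$).

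Now lift these families to vector fields on $\R^n$ pointing along $\partial_1$: define $\tilde X(y_1,q):=X_q(y_1)\partial_1$, and analogously $\tilde Y,\tilde Z\in \diff_c^\infty(\R^n)$, all compactly supported by the compact support of $\widehat{X_\cQ},\widehat{Y_\cQ},\widehat{Z_\cQ}$. Since both arguments lie in the $\partial_1$-direction, all the $\partial_{y_j}$-derivatives for $j\ge 2$ are annihilated when the Jacobian is multiplied by the other field, so a direct computation gives
\begin{equation*}
[\tilde X,\tilde Y](y_1,q) = \bigl(X_q(y_1)\,\partial_{y_1}Y_q(y_1) - Y_q(y_1)\,\partial_{y_1}X_q(y_1)\bigr)\,\partial_1 = [X_q,Y_q]_\R(y_1)\,\partial_1 = \tilde Y(y_1,q),
\end{equation*}
and likewise $[\tilde Y,\tilde Z]=f_1\,\partial_1$. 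Thus $\tilde Y\in\seig(\R^n)$ and $f_1\partial_1=[\tilde Y,\tilde Z]\in[\seig(\R^n),\diff_c^\infty(\R^n)]$; summing over $i=1,\dots,n$ concludes.

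The only delicate point is the fiberwise Lie-bracket computation: although $X_q,Y_q$ genuinely depend on $q$, the components of $\tilde X,\tilde Y$ in the directions $\partial_2,\dots,\partial_n$ vanish identically, so those $q$-derivatives never feed into $[\tilde X,\tilde Y]$, and the identity $[X_q,Y_q]=Y_q$ on $\R$ lifts verbatim to $[\tilde X,\tilde Y]=\tilde Y$ on $\R^n$.
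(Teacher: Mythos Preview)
Your proof is correct and follows essentially the same approach as the paper: reduce to vector fields along $\partial_1$ by linearity and coordinate permutation (via Fact~\ref{ad eig stab}), then apply the parametric one-dimensional Lemma~\ref{case cc dim 1N} with $\cQ=\R^{n-1}$ and lift the resulting families to $\R^n$. The only cosmetic difference is that the paper packages the lift as the Lie algebra isomorphism $X_{\R^{n-1}}\mapsto \widehat{X_{\R^{n-1}}}$ from $\diff^\infty_c(\R)_{\R^{n-1}}$ onto $\sh(\R^n)$, whereas you verify the bracket identity $[\tilde X,\tilde Y]=\tilde Y$ by direct computation.
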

\begin{proof}
\cref{case cc dim 1N} corresponds to the case $n=1$.  For $n\ge 2$, given $T\in \diff^\infty_c(\R^{n})$, we write its components as $T= (T_1,\dots, T_n)$. By linearity of the condition, it suffices to show that each vector field $( 0, \dots, 0, T_i, 0, \dots, 0)$ is in $[\seig(\R^n),\diff^\infty_c(\R^n)]$. 
Using an adjunction by a permutation of the coordinates and \cref{ad eig stab}, its suffices to show that each  $( T_i, 0, \dots, 0)$ is in $[\seig(\R^n),\diff^\infty_c(\R^n)]$. In other words, it suffices to prove that the following subalgebra $\sh(\R^n)$ of $\diff^\infty_c(\R^n)$ is included in $[\seig(\R^n),\diff^\infty_c(\R^n)]$ :
\[\sh(\R^n) := \{y\in \R^n\mapsto (h(y),0,\dots,0): h\in C^\infty_c(\R^n,\R)\}\; .
\]
To this end, note that $X_{\R^{n-1}}\in \diff^\infty_c(\R)_{\R^{n-1}}\mapsto \widehat {X_{\R^{n-1}}}\in \sh(\R^n)$ is an  isomorphism of Lie algebras. By \cref{case cc dim 1N} with $\cQ=\R^{n-1}$, we have:
\[  \sh(\R^n)=   [\widehat{\seig(\R)_{\cQ}} , \sh(\R^n)]\; . 
\]
Finally we note that $\widehat{\seig(\R)_{\cQ}}$ is formed by vector fields of the form $\widehat{Y_\cQ}$ such that
$Y_\cQ= [X_\cQ,Y_\cQ]$ for $X_\cQ\in \diff_c^\infty(\R)_\cQ$. Thus $\widehat{Y_\cQ}= [\widehat{X_\cQ},\widehat{Y_\cQ}]$, this proves that $\sh(\R^n)\subset   [\seig(\R^n)  , \sh(\R^n)]$.  

\end{proof}
We can now treat the general case:
 \begin{proof}[Proof of  \cref{egatiliteHN}]
Let $T\in  \diff^\infty_c(M)$. Then it decomposes in a finite sum $T=\sum_i T_i$ where each $T_i$ is compactly supported in an open set $U_i$ which is diffeomorphic to $\R^n$ via a map $\psi_i\colon U_i\to \R^n$. Consider the push forward $\psi_{i*}T_i\in   \diff^\infty_c(\R^n)$. By \cref{case cc dim N} 
the field  $\psi_{i*}T_i|U_i$   belongs to  $[ \seig(\R^n),\diff^\infty_c(\R^n)]$. Thus by \cref{case cc dim 1N}, it holds $T_i|U_i\in [ \seig(U_i),\diff^\infty_c(U_i)]$.  This means that   $T_i=\sum_{\text{Finite}} [Y_j,Z_j]$ with $Y_j=[X_j,Y_j]$ for some $X_j, Y_j,Z_j\in \diff^\infty_c (\R^n)$. Extending all these vector fields by $0$, we obtain that $T_i$ belongs to $ [ \seig(M),\diff^\infty_c(M)]$.   So does $T=\sum_i T_i$. 
 \end{proof}
\paragraph{Parametric counterpart.}
We set:
\[ \seig(M)_\cP := \lbrace Y_\cP \in \diff_c^\infty(  M )_\cP :   Y_\cP = [ X_\cP , Y _\cP 
] \text{ with } X_\cP \in  \diff_c^\infty(  M )_\cP \rbrace \;.  \]
We shall prove \cref{p.deceigcP} that we  rephrase as: 
\begin{proposition}\label{p.deceigcP rephrase} 
It holds: 
\begin{equation*}  [\seig(M)_\cP ,\diff^\infty_c(M)_\cP]=\diff^\infty_c(M)_\cP.\end{equation*}  
\end{proposition}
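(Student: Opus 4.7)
The plan is to mirror step by step the proof of the non-parametric \cref{egatiliteHN}, exploiting the fact that the base case \cref{case cc dim 1N} was already established in a parametric form over an \emph{arbitrary} manifold $\cQ$. The only genuinely new issue is to verify that the final partition-of-unity argument still delivers a \emph{finite} decomposition in the parametric setting, which it does thanks to the compactness of $\cP$ together with the compact-support condition defining $\diff^\infty_c(M)_\cP$.

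First I would settle the case $M=\R$ by applying \cref{case cc dim 1N} with $\cQ:=\cP$: given $T_\cP\in\diff^\infty_c(\R)_\cP$, its parametric assertion produces $Y_\cP\in\seig(\R)_\cP$ and $Z_\cP\in\diff^\infty_c(\R)_\cP$ with $T_\cP=[Y_\cP,Z_\cP]$, which is exactly the statement needed. Next I would treat $M=\R^n$ by the same coordinate-splitting as in \cref{case cc dim N}: writing $T_\cP=(T_{1,\cP},\dots,T_{n,\cP})$ and using linearity together with \cref{ad eig stab} applied to permutations of coordinates, it suffices to handle families of the form $(h_\cP,0,\dots,0)$ with $h_\cP\in C^\infty_c(\R^n,\R)_\cP$. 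Viewing such a family as a smooth family of vector fields on $\R$ with parameter in $\cQ:=\R^{n-1}\times\cP$, \cref{case cc dim 1N} provides a decomposition over $\cQ$; transporting it through the Lie algebra isomorphism
\begin{equation*}
X_{\R^{n-1}\times\cP}\in\diff^\infty_c(\R)_{\R^{n-1}\times\cP}\longmapsto \bigl((y_1,\dots,y_n)\mapsto (X_{(y_2,\dots,y_n,p)}(y_1),0,\dots,0)\bigr)_{p\in\cP}
\end{equation*}
yields the required families in $\seig(\R^n)_\cP$ and $\diff^\infty_c(\R^n)_\cP$.

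For a general compact manifold $M$, fix a finite open cover $(U_i)_{i=1}^N$ of the projection to $M$ of $\bigcup_{p\in\cP}\supp T_p\times\{p\}$ by open sets $U_i\subset M$ each diffeomorphic to $\R^n$ via $\psi_i\colon U_i\to\R^n$, together with a subordinate partition of unity $(\chi_i)_{i=1}^N$. Then $T_\cP=\sum_i (\chi_i\cdot T_p)_{p\in\cP}$, and each summand has support contained in $U_i\times\cP$. Pushing $(\chi_i\cdot T_p)_{p\in\cP}$ forward through $\psi_i$ gives an element of $\diff^\infty_c(\R^n)_\cP$ to which the case $M=\R^n$ just proved applies; pulling the resulting finite decomposition back through $\psi_i$ and extending by zero produces, via \cref{ad eig stab} applied fiberwise, the desired finite decomposition of $(\chi_i\cdot T_p)_{p\in\cP}$ inside $[\seig(M)_\cP,\diff^\infty_c(M)_\cP]$. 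Summing over $i$ concludes.

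The main obstacle is essentially bookkeeping: one must check that all constructions in \cref{case cc dim 1N} (the fixed diffeomorphism $\psi\colon(-a,a)\to\R$, the fields $\widetilde X,\widetilde Y$, and the integral $\widetilde Z_p=\psi^*\int_{-\infty}^\cdot T_p$) depend smoothly on the additional parameter $p\in\cP$ and extend smoothly by zero to $\R$, and that compact support in the combined sense of $\diff^\infty_c(M)_\cP$ is preserved throughout — in particular that the cover $(U_i)$ in the last step can be chosen finite, which is guaranteed by compactness of $\cP$ and of the support of the family in $M\times\cP$. Once these routine verifications are in place the proof proceeds verbatim as in the non-parametric case.
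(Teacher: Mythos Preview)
Your proposal is correct and follows essentially the same route as the paper: reduce to $M=\R^n$ via a finite partition of unity, reduce the $\R^n$ case to families of the form $(h_\cP,0,\dots,0)$ by coordinate permutations, and then invoke the parametric clause of \cref{case cc dim 1N} with $\cQ=\R^{n-1}\times\cP$ through the Lie algebra isomorphism you wrote down. The paper packages the $\R^n$ step as a separate lemma (\cref{case cc dim Np}) and is terser about the bookkeeping, but the content is identical.
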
 
Similarly to the proof of \cref{egatiliteHN}, \cref{p.deceigcP rephrase}  is an easy consequence of the following:  
\begin{lemma}\label{case cc dim Np} We have $[\seig(\R^n)_\cP,\diff^\infty_c(\R^n)_\cP]=\diff^\infty_c(\R^n)_\cP$. \end{lemma}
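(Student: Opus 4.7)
The plan is to mimic the proof of \cref{case cc dim N} verbatim, but with the auxiliary parameter space $\R^{n-1}$ appearing there replaced by $\R^{n-1}\times\cP$. The idea is simply to absorb the parameters $p\in\cP$ into the "dummy" coordinates $(y_2,\dots,y_n)$, reducing the parametric $n$-dimensional statement to the parametric $1$-dimensional statement \cref{case cc dim 1N} with parameter manifold $\cQ=\R^{n-1}\times\cP$.

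First I would fix $T_\cP=(T_p)_{p\in\cP}\in\diff^\infty_c(\R^n)_\cP$ and decompose each $T_p$ componentwise as $T_p=\sum_{i=1}^n T_{p,i}\cdot e_i$, obtaining families whose sum is $T_\cP$. By linearity, I only need to handle each component family separately. Applying \cref{ad eig stab} to the diffeomorphism of $\R^n$ that swaps coordinates $y_1$ and $y_i$ (which is independent of $p$, and so defines a valid adjunction on $\cP$-families), it is enough to show that the subspace
\begin{equation*}\sh(\R^n)_\cP:=\{(y\mapsto(h_p(y),0,\dots,0))_{p\in\cP}:h_\cP=(h_p)_{p\in\cP}\text{ with }\widehat{h_\cP}\in C^\infty_c(\R^n\times\cP,\R)\}\end{equation*}
is contained in $[\seig(\R^n)_\cP,\diff^\infty_c(\R^n)_\cP]$.

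The key observation is that the map
\begin{equation*}\Psi\colon(X_{(q,p)})_{(q,p)\in\R^{n-1}\times\cP}\in\diff^\infty_c(\R)_{\R^{n-1}\times\cP}\;\mapsto\;(y\mapsto(X_{((y_2,\dots,y_n),p)}(y_1),0,\dots,0))_{p\in\cP}\in\sh(\R^n)_\cP\end{equation*}
is an isomorphism of Lie algebras: smoothness and the compact-support condition match under the obvious identification $\widehat{X_{\R^{n-1}\times\cP}}\in\diff^\infty_c(\R\times\R^{n-1}\times\cP)\leftrightarrow\widehat{\Psi(X)_\cP}\in\diff^\infty_c(\R^n\times\cP)$, and the Lie bracket on each side is defined componentwise in the obvious way. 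Since \cref{case cc dim 1N} is formulated for an arbitrary manifold $\cQ$, I may apply its parametric assertion with $\cQ:=\R^{n-1}\times\cP$ to decompose any $S_\cQ\in\diff^\infty_c(\R)_\cQ$ as a single bracket $[Y_\cQ,Z_\cQ]$ with $Y_\cQ\in\seig(\R)_\cQ$, $Z_\cQ\in\diff^\infty_c(\R)_\cQ$.

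Pushing this decomposition through $\Psi$ and using that $\Psi$ is a Lie algebra morphism (so the identity $Y_\cQ=[X_\cQ,Y_\cQ]$ yields $\Psi(Y_\cQ)=[\Psi(X_\cQ),\Psi(Y_\cQ)]$, hence $\Psi(\seig(\R)_\cQ)\subseteq\seig(\R^n)_\cP$), I conclude $\sh(\R^n)_\cP\subseteq[\seig(\R^n)_\cP,\diff^\infty_c(\R^n)_\cP]$, completing the proof. I expect no genuine obstacle: the entire argument is structural, and the only small piece of bookkeeping is checking that the compact-support conditions transport correctly under $\Psi$, which is straightforward because the statement of \cref{case cc dim 1N} was explicitly designed to accommodate an arbitrary (possibly non-compact) parameter manifold.
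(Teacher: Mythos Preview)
Your proposal is correct and follows essentially the same route as the paper's proof: reduce to the subspace $\sh(\R^n)_\cP$ via coordinate permutations, identify it with $\diff^\infty_c(\R)_{\R^{n-1}\times\cP}$ via the isomorphism $\Psi$, and apply the parametric assertion of \cref{case cc dim 1N} with $\cQ=\R^{n-1}\times\cP$. The paper's proof is more terse but structurally identical.
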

\begin{proof}
As in the proof of \cref{egatiliteHN}, we only need to prove that  the following Lie subalgebra $\sh(\R^n)_\cP$ of $\diff^\infty_c(\R^n)_\cP$ is included in $[\seig(\R^n)_\cP,\diff^\infty_c(\R^n)_\cP]$ :
\[\sh(\R^n)_\cP := \{  ((h_p,0,\dots,0))_{p\in \cP}: h_\cP=(h_p)_{p\in \cP} \in C^\infty_c(\R^n,\R)_\cP\}\; .
\]
To this end we proceed as in \cref{case cc dim N}, by using the isomorphism of Lie algebra:  
\[X_{\R^{n-1}\times \cP}\in \diff^\infty_c(\R)_{\R^{n-1}\times \cP}\mapsto  Y_\cP \in \sh(\R^n)_\cP\text{ such that }\widehat{X_{\R^{n-1}\times \cP}}=\widehat {Y_\cP}\; ,\]
and using  \cref{case cc dim 1N}. 
\end{proof}

 \begin{appendix}
 \section{Smoothness of outputs} 
 \label{lieu preuve p.diffeocontinuity}
 
   The proof of \cref{p.diffeocontinuity} stating that the output of a plugin is necessarily smooth is similar to the classical renormalization performed by Douady-Ghys \cite{douady1987disques,ghys1984transformations}, Yoccoz \cite{Yo95} and Shilnikov-Turaev \cite{ST00}. 
\begin{proof}[Proof of \cref{p.diffeocontinuity}] 
 Let $g$ be a plugin with step $  \sigma$. Let $\pi: \tilde\T\times M :=\R \times M\to\T\times M$ be the canonical cover. Let $\tilde g  $ be a lift of $g$ such that $\tilde g(0,y)= (\sigma, y)$ for every $y\in M$.
 \begin{fact} The  action $\phi: (k,z)\in \Z\times \tilde\T\times M \mapsto \tilde g^k(z) \in \tilde\T\times M $ is free,  proper and discontinuous.
 \end{fact}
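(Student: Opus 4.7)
Since each $\tilde g^k$ is already a homeomorphism of $\tilde\T\times M$, the action $\phi$ is free, proper and discontinuous iff the following two assertions hold: (F) $\tilde g^k(z)\ne z$ for every $z$ and every $k\in\Z\setminus\{0\}$, and (P) for every compact $K\subset \tilde\T\times M$, $\{k\in\Z:\tilde g^k(K)\cap K\ne\emptyset\}$ is finite. Both will follow from one uniform one-dimensional growth estimate. Write $p\colon \tilde\T\times M=\R\times M\to\R$ for the projection on the first factor. I will show that there exist constants $c>0$ and $C\ge 0$ such that
\[
p\bigl(\tilde g^{k}(z)\bigr)-p(z)\;\ge\; ck - C \qquad \forall\, z\in\tilde\T\times M,\ \forall\, k\ge 0,
\]
together with the symmetric bound $p(\tilde g^{-k}(z))-p(z)\le -ck+C$ for $k\ge 0$.

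To prove the estimate, set $\tilde\Delta:=\bigcup_{n\in\Z}[n,n+\sigma)\times M=\pi^{-1}(\Delta_\sigma)$. Because $g|_{\Delta_\sigma}=R_\sigma$ and the lift is normalized by $\tilde g(0,y)=(\sigma,y)$, the restriction $\tilde g|_{\tilde\Delta}$ is exactly the translation by $(\sigma,0)$. Plugin property (ii) says that $\tau\colon\Delta_\sigma\to\N^\ast$ is bounded by some integer $N$, and hence every $\tilde g$-orbit meets $\tilde\Delta$ within at most $N$ consecutive iterations. Furthermore $\tilde g$ commutes with the deck transformation $T\colon(\theta,y)\mapsto(\theta+1,y)$, hence so does the first-return map $\tilde G:=\tilde g^{\tau\circ\pi}\colon\tilde\Delta\to\tilde\Delta$. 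By $T$-equivariance and continuity on each connected component of $\tilde\Delta$, there is a \emph{single} integer $d\in\Z$ with $\tilde G\bigl([m,m+\sigma)\times M\bigr)\subset [m+d,m+d+\sigma)\times M$ for every $m\in\Z$. The crucial point, which I view as the \emph{main obstacle}, is the strict inequality $d\ge 1$: after one iterate, $\tilde g$ already pushes $[m,m+\sigma)\times M$ to $[m+\sigma,m+2\sigma)\times M\subset(m,m+1)\times M$, and continuity of $\tilde g$ together with the covering condition (iii) (viewed in the cover as $\bigcup_{j\ge 0}\tilde g^j(\tilde\Delta)=\R\times M$) forces the orbit to reach the next strip rather than backtrack before returning to $\tilde\Delta$. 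Granting $d\ge 1$, any orbit reaches $\tilde\Delta$ within $N$ steps and each subsequent block of at most $N$ iterations gains at least $1$ in $p$, which yields the estimate with $c=1/N$. The symmetric estimate for negative times comes from the same argument applied to $\tilde g^{-1}$, whose first-return map shifts strips by $-d\le-1$.

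Assertion (F) is then immediate: if $\tilde g^k(z)=z$ for some $k>0$, then $\tilde g^{jk}(z)=z$ for every $j\in\N$, so $p(\tilde g^{jk}(z))=p(z)$ is bounded, contradicting $p(\tilde g^{jk}(z))\ge p(z)+cjk-C\to\infty$. For (P), if $\tilde g^k(K)\cap K\ne\emptyset$ then some $z\in K$ satisfies $|p(\tilde g^k(z))-p(z)|\le \diam p(K)$, and the estimate forces $|k|\le(\diam p(K)+C)/c$. An alternative, perhaps cleaner, way to secure $d\ge 1$ is to note that $d$ is a $\Z$-valued continuous function of the plugin within a fixed step $\sigma$, that it equals $1$ for the trivial plugin $(\theta,y)\mapsto(\theta+\sigma,y)$, and then invoke connectedness of the relevant component of the space of plugins; this bypasses the direct geometric argument at the cost of a connectedness check.
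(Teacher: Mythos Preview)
Your approach is more systematic than the paper's: the paper handles freeness, discreteness and properness by three separate one-line remarks, whereas you aim to derive everything from a single growth estimate $p(\tilde g^{k}(z))-p(z)\ge ck-C$. That reduction is correct and elegant. The problem is at the step you yourself flag as the main obstacle.

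You claim that the first-return map $\tilde G=\tilde g^{\tau\circ\pi}$ sends each strip $[m,m+\sigma)\times M$ into a \emph{single} strip $[m+d,m+d+\sigma)\times M$, and you justify this by ``continuity on each connected component of $\tilde\Delta$''. But $\tilde G$ is \emph{not} continuous on a strip in general: the return time $\tau$ jumps precisely at those $z$ for which $g^{\tau(z)}(z)\in\{0\}\times M$. Indeed, if $\tilde G(z)\in\{m'\}\times M$, then for nearby $z_n$ the iterate $\tilde g^{\tau(z)}(z_n)$ may land just to the left of $\{m'\}\times M$, hence outside $\tilde\Delta$, so $\tau(z_n)>\tau(z)$ and $\tilde G(z_n)=\tilde g^{\tau(z_n)}(z_n)$ is not close to $\tilde G(z)$ and could a priori lie in another strip. (This set of bad $z$ is nonempty: it is exactly $(g^{\tau})^{-1}(\{0\}\times M)$.) Thus the existence of a single $d$ is not established, and everything downstream---the inequality $d\ge 1$, the estimate $c=1/N$, and the alternative connectedness argument, which already presupposes that $d$ is well-defined---is left hanging.

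The paper avoids this trap: it never asserts continuity of the return map and instead argues properness directly from the bound $\tau\le N$, asserting that $\tilde g^{N}$ increases the $\R$-coordinate by more than $1$. To rescue your line of argument you would need to prove, without assuming continuity of $\tilde G$, that $\tilde G([0,\sigma)\times M)\subset[1,1+\sigma)\times M$; one way is to analyse the discontinuities explicitly using that $\tilde g^{-1}(\{m'\}\times M)=\{m'-\sigma\}\times M\subset\tilde g(\tilde\Delta)$, but this is real additional work that your sketch does not supply.
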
 
 \begin{proof} The action is free since no point of $\Delta+\Z$ is fixed by $\tilde g$ nor in its complement (every point must come back to $\Delta+\Z$). It is discontinuous since any $x\in \R\times M$ has its  orbit which equals the one of a certain $z\in [k,k+\sigma)\times M$ for some $k\in \Z$  by \cref{def.plugin}.(iii), and the orbit of $z$ is discrete by \cref{def.plugin}.(i). Finally the action is proper since $\tau $ is bounded by  some $N\ge 1$ by \cref{def.plugin}.(ii), and so any $\tilde g^{N+1}(\theta,y)$ has its $\R$-coordinate greater than $\theta+\sigma$. \end{proof}
 
 Thus the quotient $C:=\tilde\T\times M/\phi$ is a manifold. As $\tilde g$ sends the left hand side of $\Delta$ to its right hand side, the image of $\Delta$ by the group action is both open and closed, hence equal to the connected set $C$. Therefore, $\Delta_\sigma= [0,\sigma)\times M$  is a fundamental domain of this group action. Also the rescaling map $H_\sigma : \Delta_\sigma\to\T\times M$ induces a diffeomorphism between $C$ and $\T\times M$.

\begin{figure}[H]
\begin{center} \def\svgwidth{220pt}           %\'echelle du dessin
 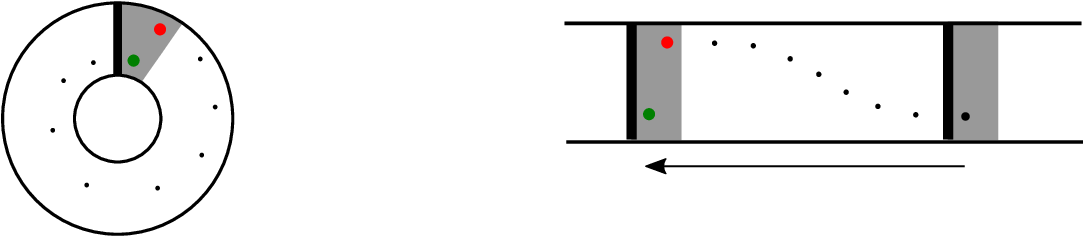
\end{center} 
\end{figure}

Now observe that $\tilde T:(\theta, y)\mapsto (\theta+1,y)$ and 
  $\tilde g$ commutes: $\tilde T\circ \tilde g= \tilde g\circ \tilde T$.
 Thus $\tilde T$ defines a smooth diffeomorphism $T$ on $C$. To determine $T$, we (abusively) identify $\Delta_\sigma$ to a subset of both $\tilde{\T}\times M$ and $\T\times M$. 
  Given  $x\in \Delta_\sigma$, the point $\tilde T(x)\in [1,1+\sigma)\times M$ is equivalent to the point $y\in \Delta_\sigma$ such that there exists $k\ge 0$ satisfying $\tilde g^k(y)=\tilde T(x)$. Note that $k=\tau(y)$ and so $\tilde T(x)= \tilde g^{\tau } (y)$.
Composing by $\pi$, we obtain that $x=   g^{\tau } (y)$.  Hence $T$ is equal to the inverse of the action of $ g^{\tau }$ on $C$. Therefore  $G$ is a diffeomorphism.
 Observe that this construction depends continuously on $g$ and so the output depends continuously on the plugin. As the space of plugins is connected, it comes that the space of outputs is connected 
to $\Id$ by \cref{exem id0}.

\end{proof}

\section{Lie algebras associated to closed subgroups}
\label{a.lie}
 This section is dedicated to show  \cref{p.sppclose} which states that $ \spp $ is a closed Lie algebra. We prove this using general arguments on closed subgroups of $ \Diff_c^\infty ( V) $ where $V$ is a manifold. 
 \begin{proposition}\label{prop lie 1}
For every closed   subgroup $\G \subset \Diff_c^\infty ( V )$, the following is 
 a closed Lie subalgebra of $ \diff_c^\infty ( V) $: 
\begin{equation*}  \sg:= \lbrace X \in \diff_c^\infty ( V) : \Fl_X ^t  \in \G,\;  \forall t \in\R \rbrace .\end{equation*} 
\end{proposition}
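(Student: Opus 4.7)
The plan is to verify the three defining properties of a closed Lie subalgebra in turn: closure in $\diff_c^\infty(V)$, stability under linear combinations, and stability under the Lie bracket.

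First I would establish closure. If $X_n \in \sg$ and $X_n \to X$ in the Whitney $C^\infty$-topology, then all $X_n$ are supported in a common compact set $K \subset V \setminus \partial V$, so the same holds for $X$. Classical continuous dependence of the flow on the vector field gives $\Fl_{X_n}^t \to \Fl_X^t$ in $\Diff_c^\infty(V)$ for each fixed $t \in \R$. Since each $\Fl_{X_n}^t \in \G$ and $\G$ is closed, $\Fl_X^t \in \G$, hence $X \in \sg$.

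Next I would prove $\sg$ is a vector subspace. Stability under scalar multiplication is immediate from $\Fl_{\lambda X}^t = \Fl_X^{\lambda t}$. For the sum, I would invoke the Trotter product formula
\begin{equation*}
\Fl_{X+Y}^t = \lim_{n \to \infty} \bigl(\Fl_X^{t/n} \circ \Fl_Y^{t/n}\bigr)^n,
\end{equation*}
with convergence in $\Diff_c^\infty(V)$. Since compactly supported vector fields and their iterated flows remain supported in a common compact set, the classical proof (a second-order Taylor expansion in $t/n$) applies uniformly in all $C^r$-norms and therefore in the Whitney topology. Each approximant lies in $\G$ because $\G$ is a subgroup, and closure of $\G$ forces the limit $\Fl_{X+Y}^t$ into $\G$. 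Hence $X + Y \in \sg$.

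Finally, for stability under the Lie bracket I would use the commutator formula
\begin{equation*}
\Fl_{[X,Y]}^{t} = \lim_{n \to \infty} \bigl(\Fl_Y^{-\sqrt{t/n}} \circ \Fl_X^{-\sqrt{t/n}} \circ \Fl_Y^{\sqrt{t/n}} \circ \Fl_X^{\sqrt{t/n}}\bigr)^{n}
\end{equation*}
for $t \ge 0$ (and the analogous formula for $t < 0$), again with convergence in $\Diff_c^\infty(V)$. The approximants are finite compositions of elements of $\G$, hence lie in $\G$, so closure of $\G$ yields $\Fl_{[X,Y]}^t \in \G$ for every $t$, giving $[X,Y] \in \sg$.

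The only real difficulty I anticipate is justifying the uniform $C^\infty$-convergence of the Trotter and commutator approximations in the Whitney topology on $\Diff_c^\infty(V)$. However, because all vector fields involved are compactly supported in a fixed compact set $K$, the flows stay inside a slightly larger compact set for bounded $t$, and the standard finite-dimensional proofs carry over verbatim on that compact set, yielding convergence in every $C^r$-norm and therefore in the Whitney topology. Once those two formulas are in hand, everything else is an immediate consequence of $\G$ being a closed subgroup.
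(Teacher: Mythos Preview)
Your proof is correct and follows essentially the same strategy as the paper: scalar stability via $\Fl_{\lambda X}^t=\Fl_X^{\lambda t}$, the Trotter product formula for sums, the group-commutator formula for the Lie bracket, and closure of $\G$ to pass to the limit. The only cosmetic difference is in the closure step: the paper argues that each map $\Fl^t:\diff_c^\infty(V)\to\Diff_c^\infty(V)$ is continuous, so $\sg=\bigcap_t(\Fl^t)^{-1}(\G)$ is an intersection of closed sets, whereas you use a sequential argument. Both are fine here, though the paper's formulation sidesteps any worry about whether sequential closure coincides with topological closure in the Whitney topology.
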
 
We immediately deduce the result of \cref{p.sppclose} by applying the latter proposition  with $ V = \T \times  M $ and $  \G = \P$. 
 \begin{proof}[Proof of \cref{prop lie 1}]
\underline{$\sg$ is a vector space.} First note that if $ X \in\G $ and $ \lambda \in \R$, it holds ${ \lambda \cdot X \in \sg}$. Now for $X, Y \in \sg$, for any large integer  $ N $  and $r\ge 1$, observe that: \[ \Fl_{X + Y}^{1/ N} =  \Fl_{X}^{1/N}\circ\Fl_Y^{1/N} +O(N^{-2})\] for the $C^r$-norm. Thus we have:
\begin{equation*} Fl_{X+Y}^1= \left(Fl_{X}^{1/N}\circ\Fl_Y^{1/N}\right)^N+O_{C^r}(N^{-1}),\end{equation*} 
and the supports of $ \left(Fl_{X}^{1/N}\circ\Fl_Y^{1/N}\right)^N$ are included in the union of those of $X$ and $Y$. Thus 
$Fl_{X+Y}^1$ is the limit of $ \left(Fl_{X}^{1/N}\circ\Fl_Y^{1/N}\right)^N$ when $ N \to \infty$ in the topology of $\Diff^\infty_c(V)$. As $\G$ is a group, the map $(Fl_{X}^{1/N}\circ\Fl_Y^{1/N})^N $ belongs to $\G$, and since $\G$ is closed the map  $Fl_{X+Y}^1$ also belongs to $\G$.  Also for every $t\in \R$, by replacing $X,Y $ by $(tX,tY)$, we obtain that $Fl_{X+Y}^t=Fl_{tX+tY}^1$ belongs to $\G$. 

\underline{$\sg$ is a Lie algebra.} For $X, Y \in \sg$ and $r\ge 1$, we have for the $C^r$-norm:
\begin{equation*} \Fl_{[X,Y]}^{\tau^2}= [\Fl_{X}^\tau , \Fl_Y^\tau ] +O(\tau ^3)\end{equation*} 
Thus by taking $\tau ^2=1/N$ small we have:
\begin{equation*} \Fl_{[X,Y]}^{1}= \left([\Fl_{X}^{\sqrt N}  , \Fl_Y^{\sqrt N} ]\right)^N +O({\sqrt N}^{-1})\; .\end{equation*} 
So $\Fl_{[X,Y]}^{1}$ is in $\G$. Similarly, we have for any $t$ that $\Fl_{[tX,Y]}^{1}\in \G$, and so $\Fl_{[X,Y]}^{t}=\Fl_{[tX,Y]}^{1}\in \G$.  

\underline{$\sg$ is closed.} As for every $t \ge 0$, the  map $ \Fl^t : X \in  \Diff_c^\infty ( V ) \mapsto\Fl_X^t $ is continuous and $\G$ is closed, the set $  \{ X \in \diff_c^\infty ( V) :\Fl_X ^t  \in \G \} $ is closed. Thus the intersection $\sg$ of the latter sets for all $t$ is closed.  
\end{proof}
To state the parameteric counterpart of the latter proposition,  given a manifold $\cP$, we define:
\begin{equation*} \Fl^t: X_\cP=(X_p)_{p\in \cP} \in  \diff^\infty_c(V)_\cP \mapsto (\Fl^t_{X_p})_{p\in \cP}\in \Diff^\infty_c(V)_\cP\; .\end{equation*} 
Note that the following diagram commutes:
\begin{equation*}  
\begin{array}{rcccl} 
	&					&\mathfrak{inc}		&					&\\
	&\diff^\infty_c (V)_\cP	&\hookrightarrow	&\diff^\infty_c (V\times \cP)&\\
\Fl^t	&\downarrow			&				&\downarrow			&\Fl^t\\
	&\Diff^\infty_c (V)_\cP	&\hookrightarrow	&\Diff^\infty_c (V\times \cP)&\\
	&					&{inc}		&					&\\
\end{array}\end{equation*} 
where $\mathfrak{inc} (X_\cP):= \widehat {X_\cP}$ and $inc(f_\cP):= \widehat {f_\cP}$. 

 \begin{coro}\label{coro lie 1}
For every closed   subgroup $\G_\cP \subset \Diff_c^\infty ( V )_\cP$, the following is 
 a closed Lie subalgebra of $ \diff_c^\infty ( V)_\cP $: 
\begin{equation*}  \sg_\cP:= \lbrace X_\cP \in \diff_c^\infty ( V)_\cP : \Fl_X ^t  \in \G,\;  \forall t \in\R \rbrace .\end{equation*} 
\end{coro}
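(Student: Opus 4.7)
The plan is to reduce the parametric statement to the non-parametric \cref{prop lie 1} by exploiting the commutative diagram that appears just above the corollary, which embeds the parametric setting inside the ordinary Lie algebra/group of $V\times\cP$.

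First I would invoke \cref{morphism cP} (and its Lie-algebra analogue described in \cref{paracaseVect}) to note that $\mathfrak{inc}\colon X_\cP \mapsto \widehat{X_\cP}$ and $\mathit{inc}\colon f_\cP\mapsto \widehat{f_\cP}$ are topological embeddings that are morphisms of (Lie) algebras and of groups respectively. Their images $\mathfrak{inc}(\diff_c^\infty(V)_\cP)$ and $\mathit{inc}(\Diff_c^\infty(V)_\cP)$ consist respectively of compactly supported vector fields on $V\times \cP$ whose $\cP$-coordinate is $0$, and of compactly supported diffeomorphisms of $V\times \cP$ preserving each fiber $V\times\{p\}$. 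These two fiber-preservation conditions are pointwise closed, so both images are closed in $\diff_c^\infty(V\times\cP)$ and $\Diff_c^\infty(V\times\cP)$. In particular, since the topology on $\Diff_c^\infty(V)_\cP$ is the one induced by $\mathit{inc}$, the closedness of $\G_\cP$ in $\Diff_c^\infty(V)_\cP$ together with the closedness of the image of $\mathit{inc}$ implies that $\widehat{\G_\cP}:=\mathit{inc}(\G_\cP)$ is closed in $\Diff^\infty_c(V\times \cP)$. It is clearly a subgroup since $\mathit{inc}$ is a group morphism.

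Next I would apply \cref{prop lie 1} to the manifold $V\times \cP$ and the closed subgroup $\widehat{\G_\cP}$. This yields a closed Lie subalgebra
\begin{equation*}
\widetilde{\sg}:=\{X\in\diff_c^\infty(V\times\cP) : \Fl_X^t\in\widehat{\G_\cP},\ \forall t\in\R\}\subset \diff_c^\infty(V\times\cP).
\end{equation*}
The commutativity of the diagram $\Fl^t\circ\mathfrak{inc}=\mathit{inc}\circ\Fl^t$ gives $\Fl^t_{\widehat{X_\cP}}=\widehat{\Fl^t_{X_\cP}}$, so for $X_\cP\in\diff_c^\infty(V)_\cP$ one has $\widehat{X_\cP}\in\widetilde{\sg}$ if and only if $\Fl^t_{X_\cP}\in \G_\cP$ for every $t\in\R$, i.e.\ if and only if $X_\cP\in\sg_\cP$. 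In particular
\begin{equation*}
\mathfrak{inc}(\sg_\cP)=\widetilde{\sg}\cap \mathfrak{inc}(\diff_c^\infty(V)_\cP).
\end{equation*}

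The right-hand side is the intersection of two closed Lie subalgebras of $\diff_c^\infty(V\times\cP)$, hence itself a closed Lie subalgebra. Since $\mathfrak{inc}$ is a topological embedding and a Lie algebra morphism, it is a homeomorphism onto its closed image, and therefore pulling back transports closedness and the Lie subalgebra structure back to $\sg_\cP\subset\diff_c^\infty(V)_\cP$. This concludes the proof. The only nontrivial verification in the argument is that $\mathit{inc}(\Diff_c^\infty(V)_\cP)$ and $\mathfrak{inc}(\diff_c^\infty(V)_\cP)$ are closed in the ambient spaces; this is the main point where one uses the precise definitions of the topologies given in \cref{sec:def f cp}, and everything else is essentially formal transport along the commutative diagram.
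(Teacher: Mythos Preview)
Your proof is correct and follows essentially the same route as the paper: embed $\G_\cP$ into $\Diff_c^\infty(V\times\cP)$ via $\mathit{inc}$, apply \cref{prop lie 1} there, and pull back along $\mathfrak{inc}$ using the commutative diagram. The paper's version is terser and asserts directly that $\mathfrak{inc}(\sg_\cP)=\widehat{\sg_\cP}$ (which is true since any $X$ whose flow lies in $\widehat{\G_\cP}$ must have zero $\cP$-component), whereas you write $\mathfrak{inc}(\sg_\cP)=\widetilde{\sg}\cap\mathfrak{inc}(\diff_c^\infty(V)_\cP)$ and also spell out why $\mathit{inc}(\G_\cP)$ is closed; these are just more explicit versions of the same steps.
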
 
\begin{proof} 
First note that  $inc(\G_\cP)$ is a closed Lie sub-group of $\Diff^\infty_c (V\times \cP)$. Hence it define via \cref{prop lie 1} a closed Lie algebra  $\widehat{\sg_\cP}$.  By commutativity of the diagram, we have:
\[ \mathfrak{inc} (\sg_\cP):=  ( \widehat{\sg_\cP}) \; . \]
Hence $\sg_\cP$ is a closed Lie subalgebra of $\diff_c^\infty ( V)_\cP$. 
\end{proof} 
\end{appendix}

\bibliographystyle{alpha}

%\nocite{*}
\bibliography{main.bib}
\end{document}